\newtheorem{theorem}{Theorem}[section]
\newtheorem{lemma}[theorem]{Lemma}
\newtheorem{proposition}[theorem]{Proposition}
\newtheorem{corollary}[theorem]{Corollary}
\newtheorem{predefinition}[theorem]{Definition}
\newenvironment{definition}{\begin{predefinition}\rm}{\end{predefinition}}
\newtheorem{preremark}[theorem]{Remark}
\newenvironment{remark}{\begin{preremark}\rm}{\end{preremark}}
\numberwithin{equation}{section}
\newcommand{\Z}{\mathbb Z}
\newcommand{\N}{\mathbb N}
\newcommand \bR {{\mathbb R}}
\newcommand \bC {{\mathbb C}}
\newcommand \Q {{\mathbb Q}}
\newcommand \F {{\mathbb F}}
\DeclareMathOperator{\frob}{Frob}
\DeclareMathOperator{\Gal}{Gal}
\DeclareMathOperator{\CSp}{CSp}
\DeclareMathOperator{\Sp}{Sp}
\DeclareMathOperator{\GL}{GL}
\newcommand \LI {\mathrm{LI}}
\newcommand*\diff{\mathop{}\!\mathrm{d}}
\title{Exceptional biases in counting primes over function fields}
\author{
    Alexandre Bailleul}
    \address{ENS Paris-Saclay, Centre Borelli, UMR 9010, 91190 Gif-sur-Yvette, France}
     \email{alexandre.bailleul@ens-paris-saclay.fr}
\author{Lucile Devin}
\address{Univ. Littoral C\^ote d'Opale, UR 2597 LMPA, Laboratoire de Math\'ematiques Pures et Appliqu\'ees Joseph Liouville, F-62100 Calais, France}
    \email{lucile.devin@univ-littoral.fr}
\author{Daniel Keliher}
\address{University of Georgia, Department of Mathematics,  200 D. W. Brooks Drive, Athens, GA 30602, USA}
    \email{keliher@uga.edu}
\author{Wanlin Li}
 \address{Washington University in St. Louis, Department of Mathematics and Statistics, One Brookings Drive, 
St. Louis, MO 63130, USA}
    \email{wanlin@wustl.edu}
\date{}
\begin{document}

\maketitle

\begin{abstract}
We study how often exceptional configurations of irreducible polynomials over finite fields occur in the context of prime number races and Chebyshev's bias. In particular, we show that three types of biases, which we call ``complete bias'', ``lower order bias'' and ``reversed bias'', occur with probability going to zero among the family of all squarefree monic polynomials of a given degree in $\F_q[x]$ as $q$, a power of a fixed prime, goes to infinity. The bounds given improve on a previous result of Kowalski, who studied a similar question along particular 1-parameter families of reducible polynomials. The tools used are the large sieve for Frobenius developed by Kowalski, an improvement of it due to Perret-Gentil and considerations from the theory of linear recurrence sequences and arithmetic geometry.
\end{abstract}

\section{Introduction}

Chebyshev's bias is the phenomenon that there are more prime numbers of the form $4n+3$ than of the form $4n+1$ in initial intervals $\llbracket 2, x \rrbracket$ of $\mathbb{N}$ for most values of $x$ (more precisely, the set of such $x$ admits a logarithmic density of around $99.59\%$). More generally, primes which are congruent to a fixed non-square residue class modulo an integer $q$ are more numerous than those which are congruent to a given square residue class modulo $q$ in initial intervals of $\mathbb{N}$. The origin of this phenomenon was explained by Rubinstein and Sarnak in \cite{RS}.

The analogue of Chebyshev's bias over function fields was first considered by Cha in \cite{Cha2008} to study inequities in the distribution of irreducible polynomials in residue classes of $\F_q[x]$, and later by Cha and Im in \cite{ChaIm2011} in function field extensions. As in the classical archimedean case of \cite{RS}, a central hypothesis is a linear independence hypothesis which will be called $\LI$ throughout. If the arguments of the non-trivial inverse zeros (of non-negative imaginary parts) of the underlying $L$-functions are of the form $\sqrt{q}e^{i\theta}$, then $\LI$ claims that the $\theta$'s, together with $\pi$, are linearly independent over $\Q$. A consequence of $\LI$ is that Chebyshev's bias favours non-square residue classes rather than square residue classes in the distribution of primes. See \cite{RS}*{page~185} (where it is called $\mathrm{GSH}$) for the archimedean case, and \cite{Cha2008}*{page~1366} for the function field case.  For a survey on prime number races over $\Q$, see \cite{GranvilleMartin}.

Over $\Q$ and number fields, exceptional biases have been studied in the literature, notably in a series of papers by Ford and Konyagin \cites{Ford_Konyagin_2002,Ford_Konyagin_2003} and \cite{FordKonyaginLamzouri}.
Fiorilli and Martin \cite{FiorilliMartin}, under both the Generalized Riemann Hypothesis and $\LI$, list the largest possible biases in the prime number race between quadratic residues and non-quadratic residues. In number field extensions, Bailleul produced infinite families of examples exhibiting a reversed bias in \cite{Bailleul1}, conditionally on a suitable linear independence hypothesis. As for unconditional results, Fiorilli and Jouve constructed infinite families exhibiting a complete bias in \cite{FiorilliJouve}.

The state of affairs in the function field setting is rather different. For instance, over $\mathbb{F}_q[x]$, there are a few known counterexamples to $\LI$ (see \cite{Cha2008}*{Section 5}, \cite{DevinMeng}*{Section 3}, \cite{Dupuy et al}*{Section 7}, \cite{Sedrati}*{Section 10}), which can lead to what we call ``exceptional biases'', for example favouring square residue classes rather than non-square residue classes (``reversed bias''), or having more non-square residue classes than square residue classes 100\% of the time (``complete bias''). In \cite{CFJ2016}, Cha, Fiorilli and Jouve give examples of exceptional biases in Mazur's race related to counting points on elliptic curves.
They prove also the genericity of $\LI$ for certain families in this context in \cite{CFJ2017}.

In this paper, we investigate three types of exceptional biases. For those types of biases, we establish more precise necessary conditions than negation of $\LI$ for them to hold, and we show that they happen very rarely. 

In order to state our results more precisely, we need to introduce some notation. When $q$ is a power of a prime $p$ and $n \geq 1$, we let $$\mathcal{H}_n(\F_q)=\{ f\in \F_q[x] \mid f \text{ is monic, squarefree}, \deg f=n \}.$$ For $f \in \mathcal{H}_n(\F_q)$,  let $\chi_f$ denote the unique primitive quadratic character modulo $f$, and \begin{align*}\Pi(n; \chi_f)  &:=\ \frac{n}{q^{n/2}} \Big(\#\{h \in \F_q[x] \mid h \text{ is irreducible, } \deg h = n \text{ and } \chi_f(h)=1\}\\ &- \#\{h \in \F_q[x] \mid h \text{ is irreducible, } \deg h = n \text{ and } \chi_f(h)=-1\}\Big).\end{align*} Note that when $f$ is irreducible then this is, up to a positive factor, the difference between the number of irreducible square residues modulo $f$ of degree $n$ and those which are non-square residues. We also denote by $\mathcal{C}_f$ the hyperelliptic curve defined over $\F_q$ as the smooth projective model of the curve with affine equation $y^2 = f(x)$.  

In \cite{Kowalski2010}, Kowalski showed that, in a precise quantitative sense (see formula \eqref{Kowalski} below), the $\LI$ hypothesis is generically true for the zeta functions of hyperelliptic curves of the form $\mathcal{C}_{g(x)(x-t)}$, where $g \in \mathcal{H}_n(\F_q)$ of even degree is fixed and $t \in \F_q$ is a parameter such that $g(t) \neq 0$, as $q \to \infty$.
This implies that for most of the parameters $t$, the counting function $\Pi(n; \chi_{g(x)(x-t)})$ is biased towards negative values and changes sign infinitely many times. This behavior is expected to hold for $\Pi(n; \chi_f)$ generically among $f \in \mathcal{H}_n(\F_q)$ because of $\LI$.

Our main results are the following four bounds, which improve Kowalski's result. The terms ``complete bias'', ``lower order bias'', and ``reversed bias'' are defined, respectively, in Definitions \ref{complete}, \ref{lower}, and \ref{wrongdirection}.

\begin{theorem}\label{thm:main}
Let $p$ be an odd prime number, $q$ a power of $p$ and $n \geq 1$. We write $g = \left \lfloor \frac{n-1}{2} \right \rfloor$ and 
 $A = 2g^2+g+2$.
\begin{enumerate}
    \item\label{item LI main} We have $$\frac{1}{|\mathcal{H}_n(\F_q)|} \#\{f \in \mathcal{H}_n(\F_q) \mid \text{ The zeta function of } \mathcal{C}_f \text{ does not satisfy } \LI\} \ll_{p, g} q^{-\frac{1}{2A}} (\log q)^{1-\delta}$$ where $1 \geq \delta \underset{g \to +\infty}{\sim} \frac{1}{8g}$.
    \item\label{item complete bias th main} If $q$ is a square then, we have $$\frac{1}{|\mathcal{H}_n(\F_q)|} \#\{f \in \mathcal{H}_n(\F_q) \mid \Pi(n; \chi_f) \text{ exhibits a complete bias}\} \ll_{p, g} q^{-\frac{1}{A}} \log q,$$
    and $\#\{f \in \mathcal{H}_n(\F_q) \mid \Pi(n; \chi_f) \text{ exhibits a complete bias}\} = 0$ otherwise.
    \item\label{item lower order th main}  We have $$\frac{1}{|\mathcal{H}_n(\F_q)|} \#\{f \in \mathcal{H}_n(\F_q) \mid \Pi(n; \chi_f) \text{ exhibits a lower order bias}\} \ll_{p, g} q^{-\frac{1}{A}} \log q.$$ 
    \item\label{item wrong direction th main}  We have $$\frac{1}{|\mathcal{H}_n(\F_q)|} \#\{f \in \mathcal{H}_n(\F_q) \mid \Pi(n; \chi_f) \text{ exhibits a reversed bias}\} \ll_{p, g} q^{-\frac{1}{2A}} (\log q)^{1-\delta'},$$ where $1 \geq \delta' \underset{g \to +\infty}{\sim} \frac{7}{24g}$. 
\end{enumerate}
\end{theorem}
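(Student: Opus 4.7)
The strategy is to translate ``$\Pi(m;\chi_f)$ exhibits a reversed bias'' into an algebraic constraint on the Frobenius conjugacy class of $\mathcal{C}_f$, and then to invoke the large sieve for Frobenius on the family $\mathcal{H}_n(\F_q)$.

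First, I write the normalized inverse zeros of $L(T,\chi_f)$ as $e^{i\theta_j}$ for $j = 1,\dots,2g$. An explicit-formula analysis \`a la Rubinstein--Sarnak, adapted to the function-field setting by Cha, expresses the limiting logarithmic distribution $\mu_f$ of $\Pi(m;\chi_f)$ in terms of the angles $\theta_j$: the mean depends on the multiplicities of the Frobenius eigenvalues at $\pm\sqrt{q}$, while the symmetry properties are dictated by the group of $\Q$-linear relations among $\{\theta_j\}\cup\{\pi\}$. Under $\LI$, this distribution has negative mean and $\mu_f(\{x>0\})<1/2$, so a reversed bias forces a non-trivial $\Q$-linear relation; however, not every $\LI$-failure produces one, and the first real task is to single out the ``reversing'' configurations. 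Trivially using all $\LI$-failures would only recover the bound of part~\eqref{item LI main}, with the weaker exponent $\delta$ rather than $\delta'$.

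Next, I classify the reversing configurations. They come in two flavours: either (a) enough eigenvalues equal $+\sqrt{q}$, or combine with $-\sqrt q$ eigenvalues, to push the mean non-negative, or (b) there are resonances $\sum n_j\theta_j\equiv k\pi$ whose induced torus projection of $\mu_f$ has non-negative median. Each such configuration cuts out a proper Zariski-closed subset $V_i\subset\Sp_{2g}(\bC)$ defined by symplectic-invariant polynomial equations in the eigenvalues. The key observation is that the reversing locus is strictly smaller than the full $\LI$-failure locus used for part~\eqref{item LI main}: many dependences, including generic ones, preserve the sign of the median, and only a sparse union $\bigcup_i V_i$ reverses it. Quantifying this extra sparsity via counting the coefficients of the defining symplectic-invariant polynomials is what converts the exponent $\delta\sim 1/(8g)$ into $\delta'\sim 7/(24g)$.

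Finally, the geometric monodromy of the universal family $\mathcal{H}_n(\F_q)$ is $\Sp_{2g}$ by Katz--Sarnak, so Kowalski's large sieve for Frobenius, sharpened by Perret-Gentil, applies to each $V_i$. Summing over the finitely many reversing types and balancing the number of sieving moduli against the codimensions yields $\#\{f\in\mathcal{H}_n(\F_q):\frob_f\in\bigcup_i V_i\}\ll q^{-1/(2A)}\,|\mathcal{H}_n(\F_q)|\,(\log q)^{1-\delta'}$, with $A=2g^2+g+2$ reflecting $\dim\Sp_{2g}=2g^2+g$ plus the additional codimension contribution of the $V_i$. I expect the chief obstacle to lie in the second step: giving a sufficiently fine classification of the resonances $\sum n_j\theta_j\equiv k\pi$ that genuinely reverse the bias, and performing the precise codimension bookkeeping needed to extract the constant $7/24$ in $\delta'$ rather than $1/8$.
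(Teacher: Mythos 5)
Your proposal addresses only part~\eqref{item wrong direction th main} of the theorem; parts~\eqref{item LI main}--\eqref{item lower order th main} are not touched at all, and they require separate necessary conditions (vanishing at $\sqrt q$ for complete bias, degeneracy of $\chi_f$ for lower order bias) together with their own counting lemmas. Even restricted to part~\eqref{item wrong direction th main}, the outline correctly identifies the high-level strategy and the two ``flavours'' you list do roughly match the paper's necessary condition (Proposition~\ref{Lemma necessary wrong direction}: either a relation $\sum k_i\theta_i\equiv 0\pmod{2\pi}$ with $\sum k_i$ odd, or $m_0<m_\pi$, the latter forcing $q$ to be a square). But the proposal stops at ``sieve for $\bigcup_i V_i$'' without supplying the mechanism that makes the sieve effective, and the characterization of the reversing locus as a union of Zariski-closed subsets of $\Sp_{2g}(\bC)$ is not how the argument actually proceeds.

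The essential missing step is the Galois-theoretic translation (Lemma~\ref{Lemma Galois condition WD}): after peeling off the degenerate and inseparable cases (handled by Lemma~\ref{Lemma bound quotient}) and the non-transitive case (handled via reducibility of the real Weil polynomial, Lemmas~\ref{Lemma h reducible} and~\ref{Lemma bound real Weil}), an odd-weight resonance forces $\mathrm{Gal}_{\Q}(P_f)$, viewed inside $W_{2g}\subset\mathfrak S_{2g}$, to avoid every transposition $(i\,{-}i)$ and every $4$-cycle $(i\,j\,{-}i\,{-}j)$. These avoidance conditions are not cut out by polynomial equations on eigenvalues; they are detected modulo each prime $\ell$ by the factorization type of the characteristic polynomial, and the sieving sets $\Omega_\ell$ are the matrices whose characteristic polynomial exhibits one of several carefully chosen factorization patterns (Lemma~\ref{Lemma bound wrong direction condition}). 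The exponent $7/24$, as opposed to Kowalski's $1/8$, comes entirely from counting $q$-symplectic polynomials with these factorization types (Lemma~\ref{lemma counting polynomials}), in particular from using both a quadratic irreducible block ($k=1$) and a quartic irreducible block ($k=2$) — the quartic block is the new ingredient. Without this reduction and this combinatorial count, there is no route in your sketch to the specific constant $7/24$, and the ``codimension bookkeeping'' you gesture at does not substitute for it.
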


To prove this theorem, we follow closely Kowalski's method based on the large sieve for Frobenius developed in \cite{Kowalski book} (and improved by Perret-Gentil \cite{Perret-GentilANT}). The theorem above should be compared to Kowalski's bound \eqref{Kowalski}, which we now state.

\begin{theorem}[\cite{Kowalski2010}*{Proposition~1.1}]\label{thm:Kowalski}
Let $g \geq 1$ be an integer, and let $f \in \Z[x]$ be a squarefree monic polynomial of degree $2g$. Let $p$ be an odd prime such that $p$ does not divide the discriminant of $f$, and let $U/\F_p$ be the open subset of the affine $t$-line where $f(t) \neq 0$. Consider the algebraic family $\mathcal{C}_f \to U$ of smooth projective hyperelliptic curves of genus $g$ given as the smooth projective models of the curves with affine equations $$C_t : y^2 = f(x)(x-t), \quad \text{ for } t \in U.$$ Then for any extension $\F_q/\F_p$ we have \begin{equation} \label{Kowalski} \frac{1}{|U(\mathbb{F}_q)|} \#\{t \in U(\mathbb{F}_q) \mid \text{ The zeta function of } C_t \text{ does not satisfy } \LI\} \ll_g q^{-\frac1{2A}} (\log q)^{1-\delta},\end{equation} where $A = 2g^2 + g + 2$ and $1 \geq \delta \underset{g \to +\infty}{\sim} \frac{1}{8g}$.
\end{theorem}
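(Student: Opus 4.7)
The plan is to apply Kowalski's large sieve for Frobenius from \cite{Kowalski book} to the lisse $\ell$-adic sheaf $\mathcal{F}_\ell = R^1 \pi_* \Q_\ell$ attached to the family $\pi \colon \mathcal{C}_f \to U$ of hyperelliptic curves $C_t : y^2 = f(x)(x-t)$. For $t \in U(\F_q)$, the geometric Frobenius at $t$ acts on the stalk of $\mathcal{F}_\ell$ with characteristic polynomial equal to the numerator $L_t(X)$ of the zeta function of $C_t$. The first structural input is a monodromy computation: because one varies a single Weierstrass point through the family, the geometric monodromy of $\mathcal{F}_\ell$ is the full symplectic group $\Sp_{2g}$, and for every $\ell$ larger than an effective constant depending only on $p$ and $g$, the mod-$\ell$ representation of $\pi_1^{\mathrm{geom}}(U)$ surjects onto $\Sp_{2g}(\F_\ell)$. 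This is standard in the Katz--Sarnak style, via results of J.-K.~Yu and C.~Hall for hyperelliptic families.

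Next, I would translate the failure of $\LI$ at $t$ into a mod-$\ell$ Frobenius condition. Writing the inverse zeros of $L_t$ as $\alpha_j = \sqrt{q}\, e^{i\theta_j}$ and $\bar\alpha_j$ for $1 \le j \le g$, the failure of $\LI$ produces integers $(n_1, \dots, n_g, m)$ with the $n_j$ not all zero such that $\sum_j n_j \theta_j = m\pi$, equivalently a multiplicative relation $\prod_j \alpha_j^{n_j} = \pm q^{(\sum_j n_j)/2}$. By Weil-number and Galois-theoretic arguments, such a relation forces $L_t$ into a proper algebraic subset of the variety of symplectic characteristic polynomials of degree $2g$: either $L_t$ acquires a cyclotomic factor, or the Galois group of its splitting field is strictly smaller than the hyperoctahedral group $W(C_g)$. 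Reducing modulo $\ell$, this cuts out a conjugation-invariant ``bad'' subset $\Omega_\ell \subset \Sp_{2g}(\F_\ell)$ whose complement has density $\gg \ell^{-1}$.

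I would then apply the sieve inequality over primes $\ell \le L$. It yields a bound of the form $$\#\{t \in U(\F_q) : \frob_t \bmod \ell \in \Omega_\ell \text{ for all } \ell \le L\} \ll \frac{q + L^{2A}}{H(L)},$$ where $A = \dim \Sp_{2g} + 2 = 2g^2 + g + 2$ controls the Betti numbers entering the Deligne-type error term, and $H(L) \gg \log\log L$ by the density estimate above. Balancing main and error terms by taking $L \asymp q^{1/(2A)}$, and dividing by $|U(\F_q)| \asymp q$, produces the stated bound $q^{-1/(2A)} (\log q)^{1-\delta}$ with $\delta \sim 1/(8g)$ emerging from the fine prime-counting in $H(L)$.

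The main obstacle is the codimension estimate on $\Omega_\ell$, together with its uniformity in $\ell$. A priori the failure of $\LI$ can be witnessed by any of infinitely many integer relations among the $\theta_j$ and $\pi$; one must bound heights of Weil $q$-numbers of weight $1$ to reduce these to a finite (but $g$-dependent) list of algebraic conditions, and for each verify that it cuts out a proper subvariety of the variety of symplectic characteristic polynomials whose $\F_\ell$-point count can be controlled by Lang--Weil or by equidistribution in the conjugacy classes of $\Sp_{2g}(\F_\ell)$.
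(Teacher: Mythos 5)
The high-level architecture you describe (monodromy computation giving $\Sp_{2g}(\F_\ell)$ surjectivity via Yu/Hall, the large sieve for Frobenius with $L \asymp q^{1/(2A)}$, $A = 2g^2+g+2$) is the right framework and matches Kowalski. But the core of the argument — the reduction of ``$\LI$ fails'' to a sievable Frobenius condition and the resulting density estimates — is where your proposal departs from what actually works, and the departure is fatal.

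First, the reduction. You propose to enumerate the possible integer relations $\sum_j n_j\theta_j = m\pi$ directly (bounding heights of Weil numbers to reduce to a finite list) and then intersect the resulting algebraic conditions into one ``bad'' set $\Omega_\ell$. Kowalski does not attempt this, and for good reason: even after bounding heights, the coefficients $n_j$ can still be large, and it is unclear how to turn the resulting infinite family of conditions into a single conjugation-invariant mod-$\ell$ subset with controlled complement. What Kowalski uses instead is the \emph{sufficient} criterion that if the Galois group of the splitting field of $P_t$ over $\Q$ is the full hyperoctahedral group $W_{2g}$, then $\LI$ holds; failure of $\LI$ therefore forces the Galois group to be non-maximal. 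One then sieves \emph{separately} with four families of ``good'' sets $\Omega_{1,\ell},\dots,\Omega_{4,\ell}$ (e.g.\ matrices whose characteristic polynomial is irreducible; matrices whose characteristic polynomial is an irreducible quadratic times odd-degree irreducibles; the analogues for the associated real Weil polynomial $h$), because having a Frobenius lying in each $\Omega_{i,\ell}$ for some $\ell$ forces maximality of the Galois group, by Chavdarov-type arguments. The final bound is the sum $H_1^{-1}+\dots+H_4^{-1}$. This union-of-sieves structure is essential and absent from your plan.

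Second, the density claim and the resulting size of $H(L)$ are simply wrong. You assert the relevant subset has complement of density $\gg \ell^{-1}$ and that $H(L) \gg \log\log L$. Neither is compatible with the stated bound. The sets $\Omega_{i,\ell}$ all have density bounded below by a positive constant \emph{depending only on $g$}, not on $\ell$: $|\Omega_{1,\ell}|/|\Sp_{2g}(\F_\ell)| \geq 1/(2g)$, $|\Omega_{2,\ell}|/|\Sp_{2g}(\F_\ell)| \geq 1/(8g)$, $|\Omega_{3,\ell}|/|\Sp_{2g}(\F_\ell)| \sim \log 2/\log g$, $|\Omega_{4,\ell}|/|\Sp_{2g}(\F_\ell)| \sim 1/\sqrt{2\pi g}$ as $g \to \infty$. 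Feeding a density lower bound $c_g > 0$ into the Lau--Wu estimate used in \cite{Kowalski book}*{(8.24)} gives $H_i(L) \gg L (\log L)^{-1 + c_g/(1-c_g)}$, which at $L \asymp q^{1/(2A)}$ yields $q^{-1/(2A)} (\log q)^{1-\delta}$ with $\delta$ governed by the smallest of the four densities. With $H(L) \gg \log\log L$ you would lose the power saving entirely. The $\delta \sim 1/(8g)$ in the statement comes precisely from the $\Omega_{2,\ell}$ density $1/(8g)$ — note the paper explicitly remarks that Kowalski's original $1/(4g)$ was off by a factor of $2$ due to double-counting, and Lemma~\ref{lemma counting polynomials} corrects this — not from any ``fine prime counting''.

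In short: you need (i) the replacement of ``$\LI$ fails'' by ``Galois group not maximal'' and the subsequent union of four sieves with explicit Chavdarov-type densities, and (ii) a power-of-$L$ lower bound on each $H_i(L)$ coming from constant-in-$\ell$ density, not $\ell^{-1}$ codimension or $\log\log L$ growth. As written, the proposal would not yield any power saving in $q$.
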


\begin{remark}
The bound stated in \cite{Kowalski2010} is a bit larger, the exponent of $\log q$ is simply $1$, but Kowalski gave this better exponent in \cite{Kowalski book}*{Theorem~8.15}, for the more general condition that the Galois group of the zeta function of $\mathcal{C}_t$ is not maximal. It is indeed more general since if there exists a non-trivial linear relation between $\pi$ and the arguments of the roots of the zeta function, hence a multiplicative relation between those roots, then its Galois group is not maximal since this relation cannot be preserved by every allowed permutations of the roots. However, note there is a typo in the bound stated in \cite{Kowalski book}*{Theorem~8.15}: the exponent there reads $1-\delta$ with $\delta \underset{g \to +\infty}{\sim} \frac{1}{4g}$, coming from the larger contribution of $\delta_2 \geq \frac{1}{4g}$ p.181, but we can actually only get $\delta_2 \geq \frac{1}{8g}$. The count is detailed in \cite{Kowalski2006}*{Lemma 7.3 iii)} but the author is counting each symplectic polynomial with a given factorization twice, hence a missing $\tfrac{1}{2}$ factor. The proof of Lemma \ref{lemma counting polynomials} fixes this.
\end{remark}

The bounds in Theorem~\ref{thm:main} improve Kowalski's bound \eqref{Kowalski} in two aspects. First, the space of parameters is larger than Kowalski's. While he obtains his bound for families of polynomials of a very specific shape, our bound applies to all monic squarefree polynomials of a given degree. It should be noted that our method would allow us to prove the same bounds as in Theorem \ref{thm:main} but along Kowalski's family of curves in Theorem \ref{thm:Kowalski}, independently of $p$, by using the large sieve estimate \cite{Kowalski book}*{Corollary 8.10} instead of Proposition~\ref{Prop Frobenius large sieve} of this paper. Moreover, the exponents for $q$ in the bounds \ref{thm:main}.\ref{item complete bias th main} and \ref{thm:main}.\ref{item lower order th main}  are twice as small, while the exponent for $\log q$ in the last bound \ref{thm:main}.\ref{item wrong direction th main} is slightly better.
Observe however that by passing to a multidimensional space of parameters, we lose the uniformity in $p$ in the bounds. Such a phenomenon was already present in \cite{Kowalski book}*{Corollary~8.10} which results in a larger exponent of $q$ in the multidimensional case. In our case, the uniformity in $p$ is lost when applying the improved bound \cite{Perret-GentilANT}*{Theorem 5.14.(ii).(c)}. 

For the first two properties considered in Theorem \ref{thm:main}, inputs from arithmetic geometry give us better bounds for some restricted genera.
Our first improvement is for genus $1$ or $2$ concerning the failure of $\LI$.

\begin{theorem}\label{main:small genus}
Let $p \neq 2,3$ be a prime number, $q$ a power of $p$ and $3 \leq n \leq 6$. We write $g = \left \lfloor \frac{n-1}{2} \right \rfloor$, so that $1 \leq g \leq 2$.
When $g=1$, we have $$\frac{1}{|\mathcal{H}_n(\F_q)|} \#\{f \in \mathcal{H}_n(\F_q) \mid \text{ The zeta function of } \mathcal{C}_f \text{ does not satisfy } \LI\} \ll \frac{p}{q}.$$ 
When $g=2$, then we have $$\frac{1}{|\mathcal{H}_n(\F_q)|} \#\{f \in \mathcal{H}_n(\F_q) \mid \text{ The zeta function of } \mathcal{C}_f \text{ does not satisfy } \LI\} \ll_p q^{-\frac{1}{12}} \log q.$$
\end{theorem}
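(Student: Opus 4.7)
The plan is to treat $g=1$ and $g=2$ separately using arithmetic-geometric inputs tailored to low genus.

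\medskip

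\emph{Genus one case} ($n \in \{3,4\}$). The zeta function of $\mathcal{C}_f$ has a single pair of nontrivial inverse zeros $\alpha, \bar\alpha$ of modulus $\sqrt{q}$, and $\LI$ reduces to $\theta/\pi \notin \Q$, where $\alpha = \sqrt{q}e^{i\theta}$. I would first show that, for $p > 3$, a rational angle $\theta/\pi \in \Q$ forces $\mathcal{C}_f$ to be supersingular. This combines Niven's theorem (the only rational angles with rational cosine have $\cos\theta \in \{0, \pm \tfrac12, \pm 1\}$) with the constraint that $a_f = 2\sqrt{q}\cos\theta$ must be an integer: the only Frobenius traces satisfying both conditions lie in $\{0, \pm\sqrt{q}, \pm 2\sqrt{q}\}$, and each of these appears in Waterhouse's classification of supersingular traces when $p>3$. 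The count then proceeds as follows. By Deuring, there are at most $\lfloor p/12\rfloor+2$ supersingular $j$-invariants in $\overline{\F}_q$; for each such $j_0$, the fiber $\{f \in \mathcal{H}_n(\F_q) : j(\mathcal{C}_f) = j_0\}$ of the nonconstant algebraic map $f \mapsto j(\mathcal{C}_f)$ has $\ll q^{n-1}$ points by a dimension count and Lang--Weil. Summing gives $\ll pq^{n-1}$ bad polynomials, which divided by $|\mathcal{H}_n(\F_q)| \asymp q^n$ yields the $\ll p/q$ bound.

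\medskip

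\emph{Genus two case} ($n\in\{5,6\}$). Here $A = 2g^2+g+2 = 12$, and the target bound $\ll_p q^{-1/12}\log q$ sharpens the exponent of $q$ in Theorem~\ref{thm:main}(\ref{item LI main}) by a factor of two, in parallel with the improvement from part (\ref{item LI main}) to parts (\ref{item complete bias th main})--(\ref{item lower order th main}) of Theorem~\ref{thm:main}. The plan is to apply the large sieve for Frobenius of Kowalski--Perret-Gentil (the same setup used for Theorem~\ref{thm:main}) but with a sharper encoding of the bad event. For $g=2$, LI-failure for the Weil polynomial $P_f(T) = T^4 - aT^3 + bT^2 - aqT + q^2$ can be described by a finite, explicit list of algebraic conditions on the pair $(a,b)$: $P_f$ having a special factorization, or $J_f = \mathrm{Jac}(\mathcal{C}_f)$ being isogenous to $E_1\times E_2$ with Frobenius angles satisfying a small integer relation, or $J_f$ having real multiplication or CM with a specific endomorphism type. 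Each such condition cuts out a positive-codimension Zariski-closed subset of the space of symplectic conjugacy classes of $\mathrm{CSp}_4(\F_\ell)$, and, in contrast to the general-genus situation, can be detected by sieving at a \emph{single} well-chosen auxiliary prime $\ell$ rather than a product of primes (this is the source of the factor-of-two improvement in the exponent of $q$, and of the reduction of $(\log q)^{1-\delta}$ to $\log q$). Optimizing $\ell$ in the large sieve inequality should then produce the claimed bound, with the loss of uniformity in $p$ coming, as in Theorem~\ref{thm:main}, from Perret-Gentil's refinement of the sieve.

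\medskip

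The main obstacle is in the genus two case: one must explicitly enumerate the LI-failure strata in terms of conditions on the Weil polynomial and verify uniform density bounds modulo $\ell$ strong enough to run the sieve efficiently at a single prime. The genus one argument is much more direct, as the reduction to supersingularity and the subsequent point-count by fibering over the $j$-invariant are essentially classical inputs.
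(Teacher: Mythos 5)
Your genus-one argument matches the paper's: LI-failure forces the Frobenius eigenvalues to be $\sqrt{q}$ times roots of unity, i.e.\ $\mathcal{C}_f$ supersingular, and one counts by fibering over the $j$-invariant. One caveat: Niven's theorem as stated concerns \emph{rational} cosines, but when $q$ is not a perfect square, $\cos\theta = a_f/(2\sqrt{q})$ need not be rational, so Niven does not apply directly. (What saves you is that $\zeta = e^{i\theta}$ lives in $\Q(\alpha,\sqrt q)$, hence has degree at most $4$, and then $a_f = \sqrt q\,(\zeta+\zeta^{-1}) \in \Z$ together with $p > 3$ forces $\zeta+\zeta^{-1} \in \{0,\pm 1,\pm 2\}$; but this is not Niven verbatim.) The cleaner route, used by the paper, is to cite the criterion that $\alpha/\sqrt{q}$ a root of unity is equivalent to supersingularity, without passing through rationality of the cosine. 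After that, Silverman's $\ll p$ bound on supersingular $j$-invariants, non-constancy of $f \mapsto j(\mathcal{C}_f)$, and a $q^{n-1}$ fiber bound give the result exactly as you describe.

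For genus two there is a genuine gap. Your plan is to ``enumerate LI-failure strata'' and sieve, but you never identify the structure of the LI-failure locus that makes the sieve efficient, and you flag this yourself as ``the main obstacle.'' The missing ingredient is a theorem of Ahmadi and Shparlinski (cited as Theorem~\ref{thm:Ahmadi Shparlinski}): for a genus-two curve, $\LI$ holds whenever the Jacobian is absolutely simple. Hence LI-failure forces $J_f$ to split over a finite extension $\mathbb K/\F_q$; comparing $W_{f,\mathbb K}(X^d) = \prod_k P_f(\zeta_d^k X)$ shows some $\alpha_i^d$ or some product $\alpha_i^d\alpha_j^d$ is rational, which forces $\chi_f$ to be \emph{degenerate} in the sense of Definition~\ref{nondegenerate}. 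Degeneracy is a single polynomial (codimension-one) condition on the coefficients of $P_f$, which is exactly what Lemma~\ref{Lemma bound quotient} feeds into the large sieve to produce $\ll_p q^{-1/A}\log q$ with $A = 12$. Your heuristic for the factor-of-two gain is also off: the sieve is still run over all primes $\ell \leq q^{1/(2A)}$, not at a single well-chosen prime; the gain comes because the local density of the good set is $1 - O(1/\ell)$ rather than a constant bounded away from $1$, so the sieve weight $\sum_{\ell} |\Omega_\ell|/(|\Sp_{2g}(\F_\ell)| - |\Omega_\ell|)$ grows like $\sum_\ell \ell \asymp L^2/\log L$ instead of $L/\log L$. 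Without the Ahmadi--Shparlinski reduction to degeneracy, one does not have a codimension-one description of the LI-failure locus, and the program you outline does not close.
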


In particular, in this more restricted setting,  these bounds improve on \ref{thm:main} \ref{item LI main} and {\it a fortiori} on \ref{thm:main} \ref{item wrong direction th main}. Note that the result for genus at most two comes from the fact that we completely understand the Frobenius eigenvalues for genus $1$ and $2$ hyperelliptic curves over $\overline{\mathbb{F}}_p$. The reason is that all smooth projective curves of genus at most two are hyperelliptic, and the Torelli image of $\mathcal{M}_2$ is dense in $\mathcal{A}_2$. Neither of the facts holds for higher genus.

Our last result is a bound for the bias dealt with in Theorem \ref{thm:main} \ref{item complete bias th main} which is uniform in the degree, at the expense of being worse in terms of $q$ for small $g$. 

\begin{theorem}\label{thm:complete arithm geom}
If $q=p^e$ is a fixed prime power with $2 \mid e$. Then,
$$ \sup_{n \geq 3} 
\frac{1}{|\mathcal{H}_n(\F_q)|} \#\{f \in \mathcal{H}_n(\F_q) \mid \Pi(n; \chi_f) \text{ exhibits a complete bias}\} \ll \frac{1}{q^{1/276}}.$$
\end{theorem}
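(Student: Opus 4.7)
The plan is to translate the condition of complete bias into a rigid algebraic constraint on the Frobenius eigenvalues of $\mathcal{C}_f$, and then bound the density of polynomials $f$ realizing this constraint uniformly in $n$ via arithmetic geometry, rather than through the $g$-dependent large sieve used in Theorem~\ref{thm:main}.\ref{item complete bias th main}.

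I would first unpack Definition~\ref{complete} via the explicit formula. Writing the Frobenius eigenvalues of $\mathcal{C}_f$ as $\alpha_j = \sqrt{q}\, e^{i\theta_j}$ for $j=1,\dots,g$, the explicit formula gives $\Pi(n;\chi_f) = -2\sum_{j=1}^g \cos(n\theta_j) + O(q^{-n/6})$. A complete bias forces this trigonometric expression to keep a constant sign along the relevant sequence of $n$; a rigidity argument in the spirit of the theory of linear recurrence sequences, together with the hypothesis $2\mid e$ (so that $\sqrt{q}\in\mathbb{Z}$), should force every $\theta_j \in \{0,\pi\}$. Equivalently, every Frobenius eigenvalue of $\mathcal{C}_f$ equals $\pm\sqrt{q}$, so $\mathcal{C}_f$ is supersingular in the strong sense that $\mathrm{Jac}(\mathcal{C}_f)$ is $\overline{\mathbb{F}_q}$-isogenous to $E^g$, where $E$ is a supersingular elliptic curve over $\mathbb{F}_q$ with integer Frobenius.

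Second, I would establish a uniform-in-$n$ estimate on the number of $f\in\mathcal{H}_n(\mathbb{F}_q)$ of this supersingular type. This should proceed by parametrizing such curves through the data of their Jacobians together with a polarized isogeny to $E^g$, bounding the dimension of the relevant parameter space inside $\mathcal{M}_g^{\mathrm{hyp}}$, and applying Lang--Weil point counts on a suitable Hurwitz-type cover. Since the bound from Theorem~\ref{thm:main}.\ref{item complete bias th main} already gives $q^{-1/A(g)}\log q$ with $A(g)=2g^2+g+2$, which beats $q^{-1/276}$ for every $g \leq 11$, it suffices to obtain a uniform bound of the form $q^{-1/276}$ only for $g \geq 12$; the exponent $1/276$ is then extracted by balancing the two regimes.

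The main obstacle is producing a codimension estimate for the supersingular hyperelliptic locus that is uniform in $g$ and strong enough to beat the naive count. A dimension argument inside $\mathcal{A}_g$ stops working for large $g$ because $\dim \mathcal{A}_g^{\mathrm{ss}} = \lfloor g^2/4\rfloor$ eventually exceeds $\dim\mathcal{M}_g^{\mathrm{hyp}} = 2g-1$, so one must work directly inside the hyperelliptic moduli and effectively bound its intersection with the supersingular (equivalently, Newton polygon purely of slope $1/2$) locus. This effective codimension control, uniform in $g$ and $p$, is the delicate step from which the specific constant $1/276$ is extracted.
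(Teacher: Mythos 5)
Your reduction is too aggressive, and this is where the proof would fail. You claim that complete bias forces every $\theta_j\in\{0,\pi\}$, i.e.\ that $\mathrm{Jac}(\mathcal{C}_f)$ is supersingular. This is not true: the explicit formula actually gives
\[
\Delta_f(n)=\bigl(m_0+\tfrac12\bigr)+\bigl(m_\pi+\tfrac12\bigr)(-1)^n+2\sum_{j=1}^{r}m_{\theta_j}\cos(n\theta_j),
\]
and if, say, $m_0=2g-2$, $m_\pi=0$, $r=1$, $m_{\theta_1}=1$, then $\Delta_f(n)\ge 2g-4>0$ for all $n$ once $g\ge 3$, so a complete bias occurs even though $\theta_1\notin\{0,\pi\}$. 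The paper's Proposition~\ref{lemma:necessary-complete}, proved via the Bhatia--Davis variance inequality, extracts the weaker (and correct) necessary condition $m_0(\chi_f)>m_\pi(\chi_f)\ge 0$, hence $m_0\ge 1$, i.e.\ $L(\tfrac12,\chi_f)=0$ (Corollary~\ref{cor:necessary cond complete}). That is the only constraint one can get.

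Because the constraint is ``$\sqrt{q}$ is a root of $P_f$'' rather than ``all roots equal $\pm\sqrt{q}$,'' the whole second half of your plan (parametrizing supersingular hyperelliptic Jacobians, Newton-polygon codimension estimates, Lang--Weil on Hurwitz covers, and balancing against the large-sieve regime) is aimed at the wrong set and in any case is left unresolved: you yourself flag that $\dim\mathcal{A}_g^{\mathrm{ss}}$ eventually exceeds $\dim\mathcal{M}_g^{\mathrm{hyp}}$ and give no mechanism to get around it. In the paper the issue never arises. After reducing to $\mathcal{L}(\sqrt{q},\chi_f)=0$, Theorem~\ref{thm:complete arithm geom} is a one-line application of Ellenberg--Li--Shusterman \cite{ELS}*{Theorem 3.2}, which gives exactly
\[
\sup_{n}\frac{\#\{f\in\mathcal{H}_n(\F_q):\mathcal{L}(\sqrt{q},\chi_f)=0\}}{|\mathcal{H}_n(\F_q)|}\ll q^{-1/276},
\]
uniformly in $n$. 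The exponent $1/276$ is inherited verbatim from that reference; it is not the result of interpolating between a small-$g$ and a large-$g$ regime.
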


In particular, this bound is better than the second bound of Theorem \ref{thm:main} in terms of $q$ as soon as $g \geq 12$. The underlying method coming from arithmetic geometry cannot deal with the conditions in \ref{thm:main} \ref{item lower order th main} and \ref{item wrong direction th main} because they are concerned with multiple zeros of the zeta function of $\mathcal{C}_f$ at once.

\subsection*{Outline of the paper}

In Section~\ref{section : prelim} we set the notation and give preliminary results used in the rest of the paper. 
In particular, section~\ref{recseq} states some results about linear recurrent sequences, and section~\ref{subsec : large sieve} is devoted to the proof of a large sieve statement, which is one important step in the proof of Theorem~\ref{thm:main}.
In Section~\ref{sec : LI} we give a proof of the first item of Theorem~\ref{thm:main} following Kowalski's method and Theorem~\ref{main:small genus} by elementary methods. In Section~\ref{sec :complete} we derive conditions for a complete bias and prove the second item of Theorem~\ref{thm:main} with the large sieve for Frobenius and Theorem~\ref{thm:complete arithm geom} with arithmetic geometry. In Section~\ref{sec : lower order} and~\ref{sec : wrong direction} we derive conditions for a lower order bias and a reversed bias respectively and we prove the last two items of Theorem~\ref{thm:main}. Finally, in Section~\ref{sec : counting} we gather counting lemmas obtained using our large sieve result Proposition~\ref{Prop Frobenius large sieve} that are used in the proofs of the different parts of Theorem~\ref{thm:main}.

\subsection*{Acknowledgements}
This work was partially supported by the grant KAW 2019.0517 from the Knut and Alice Wallenberg Foundation (for LD). Part of this work was conducted while WL was in residence at the Mathematical Sciences Research Institute in Berkeley, California, during the Spring 2023 semester.
The authors thank Florent Jouve, Jordan Ellenberg and Emmanuel Kowalski for helpful discussions. They also thank Régis de la Bretèche for organizing the elementary and analytic number theory seminar in IHP, Paris, where ideas used in this paper were born.

\section{Preliminary results and notations}\label{section : prelim}

\subsection{Notations and Definitions}

We first provide notations for the rest of the paper. When $f \in \mathcal{H}_n(\F_q)$, the projective curve with affine model $y^2 = f(x)$ is denoted by $\mathcal C_f$. Recall that $\mathcal C_f$ has genus $g = \left \lfloor \frac{n-1}{2} \right \rfloor$.

For $f\in \mathcal{H}_n(\F_q)$, let $\chi_f$ be the primitive quadratic character modulo $f$.  We want to compare the number of degree $n$  irreducible polynomials $P$ over $\F_q$ such that $\chi_f(P)=1$ and those such that $\chi_f(P)=-1$ for varying $n$. Define the Dirichlet $L$-function associated to a Dirichlet character $\chi$ modulo $f$ as
$$L(s, \chi) = \sum_{a \text{ monic }} \frac{\chi(a)}{\lvert a \rvert^{s}}= \prod_{P \text{ irreducible }}\left( 1- \frac{\chi(P)}{\lvert P \rvert^s}\right)^{-1},$$
where $\lvert a \rvert = q^{\deg a}$ and the sum and product above range over monic (resp. irreducible) polynomials of $\F_q[x]$. 

We now recall some properties of the $L$-functions under consideration; see e.g. \cite{Rosen2002}*{Proposition~4.3, Theorem~5.9} for details. For a non-principal Dirichlet character $\chi$, the Dirichlet~$L$-function  $L(s, \chi)$ is a polynomial $\mathcal L(u, \chi)$ in $u := q^{-s}$ with integer coefficients and the zeta function of $\mathcal{C}_f$ is a rational function in $u$, which we denote by $\zeta(\mathcal{C}_f, u) = \frac{Z_{f}(u)}{(1-u)(1-qu)}$.  Thanks to the deep work of Weil \cite{Weil_RH}, we know the analogue of the Riemann Hypothesis is satisfied for these zeta functions, that is their inverse zeros have absolute value $\sqrt q$. When $n$ is odd, then $\mathcal{L}(u, \chi_f) = Z_f(u)$, and when $n$ is even, we have $\mathcal{L}(u, \chi_f) = Z_f(u)(1-u)$. In the following, we will mostly use the reciprocal polynomial 
\begin{equation}\label{eq def P_f}
  P_f(T) = T^{2g} Z_f(T^{-1}),  
\end{equation}
 which is monic, and its roots are the inverse zeros of $Z_f$.

In the following, we denote by $\alpha_{j}(\chi)= \sqrt{q}e^{i\theta_{j}(\chi)}$ the distinct inverse zeros of $\mathcal{L}(u,\chi)$ of norm $\sqrt{q}$, with multiplicity $m_{\theta_j}(\chi)$.
We might forget the dependency in the character $\chi$ when only one character is considered and the notation stays clear from the context.
We let $r$ be the number of distinct pairs of conjugate non-real zeros of $\mathcal{L}(u, \chi_f)$. Since $\mathcal{L}(u, \chi_f)$ has real coefficients, after reordering, we can assume $\theta_{j+r}(\chi_f) = - \theta_j(\chi_f)$ and we have $m_{\theta_j}(\chi_f) = m_{-\theta_j}(\chi_f)$ for $1 \leq j \leq r$. Since $\chi_f$ is primitive, we have $$m_0(\chi_f) + m_{\pi}(\chi_f) + 2 \sum_{j=1}^r m_{\theta_j}(\chi_f) = 2g.$$

Using the explicit formula in \cite{Cha2008}*{Proposition~4.2}, our object of study is the function
    \begin{align}\label{eq:Pichi}
    \Pi(n; \chi_f)  &:=\ \frac{n}{q^{n/2}} \Big(\#\{h \in \F_q[x] \mid h \text{ is irreducible , } \deg h = n \text{ and } \chi_f(h)=1\} \nonumber \\
    &\quad - \#\{h \in \F_q[x] \mid h \text{ is irreducible , } \deg h = n \text{ and } \chi_f(h)=-1\}\Big) \nonumber \\
    &=\ \frac{n}{q^{n/2}}\sum_{\substack{\deg h = n \\ h \text{ irreducible}}} \chi_f(h) \nonumber \\
    &=\ -  \left(m_{0}(\chi_f) +\tfrac{1}{2}\right)  -\left(m_{\pi}(\chi_f)+\tfrac{1}{2}\right) (-1)^{n} -\sum_{\theta_j\neq 0, \pi} m_{\theta_j}(\chi_f)  e^{in\theta_{j}(\chi_f)}   + O_{f}\left(q^{-\frac{n}{6}}\right).
	\end{align}
	
Let $\Delta_f(n)$ be the opposite of the main sum of $\Pi(n; \chi_f)$ in \eqref{eq:Pichi}; that is \begin{equation}
    \label{eq:Delta}
\Delta_f(n) =   \left(m_{0}(\chi_f) +\tfrac{1}{2}\right)  +\left(m_{\pi}(\chi_f)+\tfrac{1}{2}\right) (-1)^{n} +\sum_{\theta_j\neq 0, \pi} m_{\theta_j}(\chi_f)  e^{in\theta_{j}(\chi_f)}.
\end{equation}
In the case the set $\lbrace \theta_1(\chi_f), \dots, \theta_r(\chi_f) \rbrace\cup \lbrace \pi\rbrace$ is linearly independent over $\mathbb{Q}$, which is expected to be the generic case, then $\Delta_f(n) -  \big(m_{0}(\chi_f) +\tfrac{1}{2}\big) $  oscillates around zero and takes positive (resp. negative) values half of the time (i.e., for $50\%$ of positive integers~$n$). 
Thus, $\Delta_f$ is larger (resp. smaller) than its mean value $m_0(\chi_f) + \tfrac{1}{2}$ for half of the positive integers $n$. 
One  deduces (see \cite{Cha2008}*{page~1366}) that there is a bias in the distribution of the values of $\Delta_f$ in the direction of positive values, $\textit{i.e.}$ coming back to $\Pi(n; \chi_f)$ we expect a bias towards negative values. Or in other terms, there are in general more irreducible polynomials~$P$ of degree~$n$ with $\chi_f(P) = -1$ than with $\chi_f(P) =1$.

Now, it can happen that the oscillating part does not distribute so well between positive and negative values. This is the case in the examples given in \cite{Cha2008}*{Section~5} and also for the different kinds of behaviors we consider in this paper.

\begin{remark}
In this paper, we are studying the summatory function of a quadratic character over irreducible polynomials. Another ``prime number race'' of interest is the one between quadratic residues and non-quadratic residues. 
Observe that these are the same in the case $f$ is irreducible. In the case $f$ is not irreducible, one has to take into account the contribution of all quadratic (non-necessarily primitive) characters modulo $ f$, which makes the study more difficult.  The general formula proved in \cite{DevinMeng}*{Proposition 5.2} is 
\begin{align*} \Pi(n; f, \square,\boxtimes)  :=&\   \frac{n}{q^{n/2}} \Big(\frac{1}{\lvert \square\rvert}\lvert \lbrace h \in \F_{q}[x] \mid  h \text{ monic irreducible, } \deg{h} = n, h \bmod f \in \square \rbrace\rvert \\
   &- \frac{1}{\lvert \boxtimes\rvert}\lvert \lbrace h \in \F_{q}[x] \mid  h \text{ monic irreducible, } \deg{h} = n, h \bmod f \in \boxtimes \rbrace\rvert \Big) \nonumber\\
    =&\  \frac{-1}{\lvert \boxtimes\rvert} \Bigg\{ \sum_{\chi \in X_f^{\text{quad}}}\Bigg(\ \left(m_{0}(\chi) +\tfrac{1}{2}\right)  +\left(m_{\pi}(\chi)+\tfrac{1}{2}\right) (-1)^{n} +\sum_{\theta_j\neq 0, \pi} m_{\theta_j}(\chi)  e^{in\theta_{j}(\chi)} \Bigg) \\
    &\hspace{2cm} + O_{f}\left(q^{-\frac{n}{6}}\right) \Bigg\},\nonumber
	\end{align*}
where $\square$ denotes the set of quadratic residues modulo $f$, $\boxtimes$ denotes the set of non-quadratic residues modulo $f$ and $X_f^{\text{quad}}$ is the set of quadratic characters modulo $f$.
\end{remark}

For a given $f\in \mathcal{H}_n(\F_q)$, we define three kinds of ``exceptional biases'' as follows.

\begin{definition}[Complete bias] \label{complete}
We say that $\Pi(n, \chi_f)$ exhibits a \emph{complete bias} if $\Delta_f(n)>0$ for almost all $n$. That is,
$$\mathrm{dens}(\Delta_f>0) := \lim_{X \to +\infty} \frac{1}{X} \sum_{n \leq X} \mathbf{1}_{\Delta_f(n) > 0} = 1.$$ \end{definition}

\begin{remark}[$\Pi(n, \chi_f)$ vs. $\Delta(n, \chi_f)$]
In particular, if $\Pi(n, \chi_f)$ exhibits a complete bias, then $\mathrm{dens}(\Pi(n, \chi_f) < 0)$ exists and is equal to $1$, but the converse need not hold. Note that the above definition may not cover all the cases for which $\overline{\mathrm{dens}}(\Pi(n, \chi_f)<0 )=1 $ : it may happen that $\Delta_f(n)=0$ for a positive proportion of $n$ and then for those $n$, the sign of $\Pi(n, \chi_f)$ is determined by the sign of the error term $O_f\left(q^{- \frac{n}{6}}\right)$ and necessitate further study. In the next definition, we define the case of ``lower order bias'' below to characterize this possibility.
\end{remark}

\begin{definition}[Lower order bias] \label{lower}
We say that $\Pi(n, \chi_f)$ exhibits a \emph{lower order bias} if $\Delta_f(n)=0$ for a positive proportion of $n$. That is, $$\mathrm{dens}(\Delta_f(n)=0) := \lim_{X \to +\infty} \frac{1}{X} \sum_{n \leq X} \mathbf{1}_{\Delta_f(n) = 0} > 0.$$
\end{definition}

\begin{remark}
The condition of having a lower order bias is close to the condition ``ties have positive density'', as introduced by Martin and Ng in \cite{MartinNg2020} in the context of prime number races.
\end{remark}

Finally, the last type of exceptional bias we are going to study is a direct incompatibility with the expectation that $\Pi(n, \chi_f)$ is negative for more than $50\%$ of  integers $n$.

\begin{definition}[Reversed bias] \label{wrongdirection}
We say that $\Pi(n, \chi_f)$ exhibits a \emph{reversed bias} if $\Delta_f(n) <0$ for more than half of the $n$. That is, $$\mathrm{dens}(\Delta_f(n) < 0) := \lim_{X \to +\infty} \frac{1}{X} \sum_{n \leq X} \mathbf{1}_{\Delta_f(n) < 0} > \frac12.$$ \end{definition}

\begin{remark}
\begin{enumerate}
    \item[]
    \item In Section \ref{recseq}, we will show the three densities in Definitions \ref{complete},\ref{lower},\ref{wrongdirection} exist, see Corollaries \ref{Cor : density lower order} and \ref{cor:complete}.
    \item Note that both a lower order bias and a reversed bias may occur simultaneously, but that is the only possible combination of two exceptional biases.
\end{enumerate}
\end{remark}

\begin{remark}\label{Rk: Cha's counting}
Observe that we could also (as in \cites{Cha2008,DevinMeng})
count irreducible polynomials of degree $\leq n$ instead of degree $=n$.
In this case, the functions replacing $\Pi(n;f,\square,\boxtimes)$ and $\Pi(n,\chi_f)$ take the following more complicated forms:
\begin{align*} \Pi(\leq n; f, \square,\boxtimes)  &:=\   \frac{n}{q^{n/2}} \Big(\frac{1}{\lvert \square\rvert}\lvert \lbrace h \in \F_{q}[x] \mid  h \text{ monic irreducible, } \deg{h} \leq n, h \bmod f \in \square \rbrace\rvert \\
    &- \frac{1}{\lvert \boxtimes\rvert}\lvert \lbrace h \in \F_{q}[x] \mid  h \text{ monic irreducible, } \deg{h} \leq n, h \bmod f \in \boxtimes \rbrace\rvert \Big) \\
    &=\  \frac{-1}{\lvert \boxtimes\rvert} \Bigg\{ \sum_{\chi \in X_f^{\text{quad}}}\Bigg(\ \left(m_{0}(\chi) +\tfrac{1}{2}\right)\frac{\sqrt{q}}{\sqrt{q}-1}  +\left(m_{\pi}(\chi)+\tfrac{1}{2}\right)\frac{\sqrt{q}}{\sqrt{q}+1}  (-1)^{n} \\ &+\sum_{\theta_j\neq 0, \pi} m_{\theta_j}(\chi) \frac{\sqrt{q}e^{i\theta_{j}(\chi)} }{\sqrt{q}e^{i\theta_{j}(\chi)} -1}  e^{in\theta_{j}(\chi)} \Bigg) + O_{f}\left(q^{-\frac{n}{6}}\right) \Bigg\};\\
\Pi(\leq n; \chi_f)  :=&\   \frac{n}{q^{n/2}}\sum_{\substack{\deg h \leq n \\ h \text{ irreducible}}} \chi_f(h) \\
    =&\ -  \left(m_{0}(\chi_f) +\tfrac{1}{2}\right)\frac{\sqrt{q}}{\sqrt{q}-1}   -\left(m_{\pi}(\chi_f)+\tfrac{1}{2}\right)\frac{\sqrt{q}}{\sqrt{q}+1}  (-1)^{n}  \\
    &- \sum_{\theta_j\neq 0, \pi} m_{\theta_j}(\chi_f) \frac{\sqrt{q}e^{i\theta_{j}(\chi_f)} }{\sqrt{q}e^{i\theta_{j}(\chi_f)} -1}  e^{in\theta_{j}(\chi_f)}+ O_{f}\left(q^{-\frac{n}{6}}\right),
	\end{align*}
where the sums are over $\lbrace h \in \F_{q}[x] \mid  h \text{ monic irreducible, } \deg{h} \leq n\rbrace$.

We cannot adapt most of our proofs for those quantities. For instance, the maximal value of such a sum is not easy to determine, and we'll make frequent use of the maximum values in Section \ref{subsec:completeconditions} (e.g. the proof of Proposition~\ref{lemma:necessary-complete} to see why maximal values are relevant to us). However, we have for example $\Delta(\leq n; f, \square,\boxtimes) = \Delta(n; f, \square,\boxtimes) + O\left(\frac{\sum_{\theta} m_{\theta}(\chi_f)}{\sqrt q}\right)$, where $\Delta(\leq n; f, \square,\boxtimes)$ represents the main sum in $\Pi(\leq n; f, \square,\boxtimes)$ above, and so if $q$ is large enough compared to $\sum_{\theta} m_{\theta}(\chi_f)$, the sign of $\Delta(\leq n; f, \square,\boxtimes)$ is the sign of $\Delta(n; f, \square,\boxtimes)$. In particular, under the right conditions, a complete bias and a reversed bias in the ``degree $=n$'' setting one gets from studying $\Pi(n; \chi_f$),  implies a similar bias in the ``degree $\leq n$'' setting one gets from studying $\Pi(\leq n; \chi_f$). Note also that the difference between counting irreducible polynomials of degree equal to $n$ and counting those of degree at most $n$ is analogous to the difference between counting prime number in intervals of the form $[X, 2X]$ and those in intervals of the form $[2, X]$.
\end{remark}

\subsection{Properties of limiting distributions}

To study the densities involved in the definitions \ref{complete}, \ref{lower}, and \ref{wrongdirection}, we will use the notion of limiting distribution, which we define as follows.
\begin{definition}
Let $D:\mathbb{N}\rightarrow\mathbb{R}$ be a real function, 
we say that $D$ admits a limiting distribution if there exists a probability measure $\mu$ on Borel sets in $\mathbb{R}$ such that
for any bounded continuous function $h$ on $\mathbb R$, we have
\begin{align*}
\lim_{Y\rightarrow\infty}\frac{1}{Y}\sum_{n\leq Y}h(D(n)) = 
\int_{\mathbb{R}}h(t)\diff\mu(t).
\end{align*}
We call $\mu$ the limiting distribution of the function $D$.
\end{definition}

The function $\Delta_f$ defined as Equation \ref{eq:Delta} is quasi-periodic, and we can apply the Kronecker-Weyl equidistribution theorem (see e.g. \cite{Hum}*{Lemma~2.7} and \cite{Bailleul2}*{Theorem~2.2}) to prove the following proposition (\cite{DevinMeng}*{Proposition 2.1}).

\begin{proposition}
\label{Prop_limitingDist}\label{muDeltaf}
The function $\Delta_f$ admits a limiting distribution $\mu_{\Delta_f}$ with mean value $m_{0}(\chi_f) + \frac{1}{2}$ and variance  $$\big(m_{\pi}(\chi_f\big)+ \tfrac12)^2 + \frac12\sum\limits_{j=1}^r m_{\theta_j}(\chi_f) ^2.$$
Moreover, the measure $\mu_{\Delta_f}$ has support in $$\left[m_0(\chi_f) - m_{\pi}(\chi_f) - 2\sum\limits_{j=1}^{r}m_{\theta_j}(\chi_f) , ~ m_0(\chi_f) + m_{\pi}(f) + 1  +2 \sum\limits_{j=1}^r m_{\theta_j}(\chi_f) \right].$$
\end{proposition}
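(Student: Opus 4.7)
The approach is to realize $\Delta_f(n)$ as the evaluation at $n$ of a continuous function on a compact torus, and to invoke the Kronecker-Weyl equidistribution theorem to produce the limiting distribution. Concretely, I introduce the trigonometric polynomial
\begin{equation*}
F(y_1,\dots,y_r,y_{r+1}) := \bigl(m_0(\chi_f)+\tfrac{1}{2}\bigr) + \bigl(m_\pi(\chi_f)+\tfrac{1}{2}\bigr)\,e^{iy_{r+1}} + \sum_{j=1}^r m_{\theta_j}(\chi_f)\,\bigl(e^{iy_j}+e^{-iy_j}\bigr),
\end{equation*}
which is a continuous real-valued function on $\mathbb{T}^{r+1}$ satisfying $\Delta_f(n) = F(n\theta_1,\dots,n\theta_r,n\pi)$. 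Let $T \subseteq \mathbb{T}^{r+1}$ denote the topological closure of the cyclic subgroup generated by $(\theta_1,\dots,\theta_r,\pi)$, equipped with its normalized Haar measure $\mu_T$. The Kronecker-Weyl theorem (e.g.~\cite{Bailleul2}*{Theorem~2.2} or \cite{Hum}*{Lemma~2.7}) ensures that $v_n := (n\theta_1,\dots,n\theta_r,n\pi)$ is $\mu_T$-equidistributed in $T$. Consequently, for every bounded continuous $h:\mathbb{R}\to\mathbb{R}$,
\begin{equation*}
\frac{1}{Y}\sum_{n\leq Y} h(\Delta_f(n)) = \frac{1}{Y}\sum_{n\leq Y} (h\circ F)(v_n) \longrightarrow \int_T (h\circ F)\, d\mu_T = \int_{\mathbb{R}} h\, d\mu_{\Delta_f},
\end{equation*}
where $\mu_{\Delta_f} := F_*\mu_T$ is the sought limiting distribution.

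The mean, variance, and support are then obtained by integrating characters of $\mathbb{T}^{r+1}$ against $\mu_T$, using that a character $y\mapsto e^{i\langle k,y\rangle}$ has zero Haar integral over $T$ unless it is trivial on $T$, i.e.~unless $k_1\theta_1 + \cdots + k_r\theta_r + k_{r+1}\pi \equiv 0 \pmod{2\pi}$. For the mean, the non-constant characters appearing in $F$, namely $e^{iy_{r+1}}$ and $e^{\pm iy_j}$, all integrate to zero: the first because the projection of $T$ to the last coordinate is exactly $\{0,\pi\}$ (so $\int e^{iy_{r+1}}\,d\mu_T = \tfrac12(1+e^{i\pi})=0$), and the others because $\theta_j \neq 0$. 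This gives $\mathrm{mean} = m_0(\chi_f) + \tfrac12$. For the variance, one expands $(F-\mathrm{mean})^2$ into characters and observes that the cross-terms of the form $e^{i(\pi\pm\theta_j)n}$ and $e^{i(\theta_j\pm\theta_k)n}$ for $j\neq k$ integrate to zero — this uses that $\theta_j \in (0,\pi)$, so $\pi\pm\theta_j \not\equiv 0\pmod{2\pi}$, and that the $\theta_j$ are pairwise distinct, so $\theta_j\pm\theta_k \not\equiv 0\pmod{2\pi}$ — so that only the diagonal terms survive to yield the asserted formula. Finally, the support bound follows from $|F(y) - (m_0+\tfrac12)| \leq (m_\pi+\tfrac12) + 2\sum_j m_{\theta_j}$, together with the separate sign contributions of $(m_\pi+\tfrac12)$ at $y_{r+1} \in \{0,\pi\}$.

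The principal subtlety — and essentially the only obstacle in this plan — is that, in the absence of $\LI$, the subgroup $T$ may be a proper subgroup of $\mathbb{T}^{r+1}$, so a character which is non-trivial on the full torus could, a priori, become trivial on $T$ and contribute a non-zero Haar integral. The argument therefore requires verifying case-by-case that each specific character appearing in the mean and variance computations remains non-trivial on $T$; this verification rests entirely on the elementary geometric facts that the $\theta_j$ lie in $(0,\pi)$ and are pairwise distinct. Everything else is a direct application of Kronecker-Weyl together with character orthogonality.
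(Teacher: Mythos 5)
Your approach is exactly the standard one, and it coincides with the paper's (the paper simply cites \cite{DevinMeng}*{Proposition~2.1}, which is proved by the same Kronecker--Weyl push-forward argument). Your treatment of the subtlety — verifying that the relevant characters stay non-trivial on the possibly-proper subtorus $T$, using only $\theta_j \in (0,\pi)$ and pairwise distinctness — is correct and is indeed the only delicate point. A cosmetic remark: $F$ as you wrote it is not real-valued on all of $\mathbb{T}^{r+1}$ (the term $e^{iy_{r+1}}$ is not real away from $\{0,\pi\}$); you should either replace $e^{iy_{r+1}}$ by $\cos(y_{r+1})$, or note that $T \subseteq \mathbb{T}^r \times \{0,\pi\}$ so that $F$ restricted to $T$ is real-valued, which is all that the push-forward requires.

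There is, however, a genuine issue hiding behind the phrase ``only the diagonal terms survive to yield the asserted formula.'' You should carry out the diagonal computation rather than assert it, because it does \emph{not} give the stated constant. On the support of $\mu_T$ one has
\begin{equation*}
F - \bigl(m_0(\chi_f) + \tfrac12\bigr) \;=\; \bigl(m_\pi(\chi_f) + \tfrac12\bigr)\cos(y_{r+1}) \;+\; 2\sum_{j=1}^r m_{\theta_j}(\chi_f)\cos(y_j),
\end{equation*}
and squaring and averaging (using $\cos^2(y_{r+1})=1$ on the support and $\int_T \cos^2(y_j)\,d\mu_T = \tfrac12$, with all cross-terms vanishing by your non-triviality checks) yields
\begin{equation*}
\mathrm{Var}(\mu_{\Delta_f}) \;=\; \bigl(m_\pi(\chi_f)+\tfrac12\bigr)^2 \;+\; 4\sum_{j=1}^r m_{\theta_j}(\chi_f)^2\cdot\tfrac12 \;=\; \bigl(m_\pi(\chi_f)+\tfrac12\bigr)^2 \;+\; 2\sum_{j=1}^r m_{\theta_j}(\chi_f)^2,
\end{equation*}
which differs from the $\tfrac12\sum_{j=1}^r m_{\theta_j}(\chi_f)^2$ in the proposition by a factor of $4$. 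The coefficient $2$ is confirmed by the paper's own computation of $\mathrm{Var}(Y_1) = 2\sum_{j=1}^{r'} m_j'(\chi_f)^2$ in the proof of Proposition~\ref{lemma:necessary-complete}; the $\tfrac12$ in the statement of Proposition~\ref{Prop_limitingDist} is therefore a typo. By deferring to ``the asserted formula'' instead of computing, your write-up silently inherits this error; the correct proof of the variance is the short calculation above.
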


The next lemma will be used to study reversed bias.

\begin{lemma}\label{lem : symmetry}
The distribution $\mu_{\Delta_f}$ in Proposition \ref{muDeltaf} is symmetric with respect to $m_{0}(\chi_f) + \frac{1}{2}$ if and only if there is no relation $$k_0\pi + \sum_{j=1}^r k_j\theta_j \equiv 0 \mod 2\pi$$ with $k_0, \dots, k_r \in \Z$ and $k_0 + \sum_{j=1}^r k_j \equiv 1 \mod2$.
\end{lemma}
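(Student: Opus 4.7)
My plan is to rephrase the symmetry condition as a statement about a pushforward of Haar measure on a torus, and then prove the nontrivial direction by a moment computation. Using $m_{-\theta_j}(\chi_f) = m_{\theta_j}(\chi_f)$, one first writes
\[
\Delta_f(n) - m_0(\chi_f) - \tfrac12 = F(n\Theta), \quad \Theta := (\pi, \theta_1, \ldots, \theta_r),
\]
where $F(y_0, \ldots, y_r) := c_0 \cos y_0 + \sum_{j=1}^r c_j \cos y_j$ with $c_0 := m_\pi(\chi_f) + \tfrac12 > 0$ and $c_j := 2 m_{\theta_j}(\chi_f) > 0$. The Kronecker--Weyl equidistribution theorem (the input to Proposition~\ref{Prop_limitingDist}) identifies the translated limiting measure with the pushforward $F_*\nu_H$ of normalized Haar measure on the closed subgroup $H \subset (\bR/2\pi\Z)^{r+1}$ generated by $\Theta$. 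Its annihilator $H^\perp = \{k \in \Z^{r+1} : k_0\pi + \sum_{j=1}^r k_j\theta_j \equiv 0 \bmod 2\pi\}$ satisfies, by Pontryagin duality, $(\pi,\ldots,\pi) \in H$ if and only if $k_0 + \sum_j k_j$ is even for every $k \in H^\perp$. So the lemma reduces to: $F_*\nu_H$ is symmetric about $0$ if and only if $(\pi,\ldots,\pi) \in H$. The implication $(\Leftarrow)$ is immediate: when $(\pi,\ldots,\pi) \in H$, translation by this element is a Haar-preserving involution of $H$ sending $F$ to $-F$ (using $\cos(y_j + \pi) = -\cos y_j$), so $F_*\nu_H$ equals its own reflection.

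For the converse I argue by contrapositive: assume some $n \in H^\perp$ has $n_0 + \sum_j n_j$ odd, and pick such an $n$ with minimal $\|n\|_1 =: d$, which is automatically odd since $|n_i| \equiv n_i \pmod 2$. I will show the $d$-th moment $M_d := \int_H F^d\, d\nu_H$ of the centered distribution is strictly positive, contradicting symmetry. Expanding $F^d$ via $\cos y_j = \tfrac12(e^{iy_j}+e^{-iy_j})$ and using $\int_H e^{i\langle m, y\rangle} d\nu_H = \mathbf{1}[m \in H^\perp]$ gives
\[
M_d = \frac{1}{2^d} \sum_{\substack{(j_\ell, \varepsilon_\ell)_{\ell=1}^d \in (\{0,\ldots,r\}\times\{\pm 1\})^d \\ m := \sum_\ell \varepsilon_\ell e_{j_\ell} \in H^\perp}} \prod_{\ell=1}^d c_{j_\ell}.
\]
For any such tuple, $\sum_i m_i = \sum_\ell \varepsilon_\ell \equiv d \equiv 1 \bmod 2$ and $\|m\|_1 \leq d$; combined with $m \in H^\perp$ and the minimality of $d$ this forces $\|m\|_1 = d$, which in turn forces all $\varepsilon_\ell$ sharing a common value of $j_\ell$ to agree in sign. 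Contributing tuples are therefore in bijection with pairs consisting of an $m \in H^\perp$ with $\sum_i m_i$ odd and $\|m\|_1 = d$, together with one of $d!/\prod_i |m_i|!$ orderings placing its coordinates into the $d$ slots. Summing,
\[
M_d = \sum_{\substack{m \in H^\perp \\ \sum_i m_i \text{ odd},\, \|m\|_1 = d}} \frac{d!}{2^d \prod_i |m_i|!} \prod_i c_i^{|m_i|} > 0,
\]
as every summand is strictly positive and the summation set is nonempty by the choice of $d$.

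The main delicacy I expect is the parametrization step just above: one must invoke the extremal property of $d$ to rule out mixed-sign tuples (where $\varepsilon = +1$ and $\varepsilon = -1$ both occur at the same coordinate), since such contributions could otherwise carry opposite signs and cause cancellation.
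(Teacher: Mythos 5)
Your proof is correct, and for the converse direction it takes a genuinely different route from the paper. The reduction via Pontryagin duality and the forward implication are the same in both: $(\pi,\dots,\pi) \in H := A(\Delta_f)$, and translation by this element is a Haar-preserving involution sending $F$ to $-F$. For the converse the paper exploits the topology of $H$: since $H$ is closed, if $(\pi,\dots,\pi) \notin H$ then $F$ is bounded away from its global minimum $-(c_0 + \cdots + c_r)$ on $H$, whereas the maximum $c_0 + \cdots + c_r$ is attained at $0 \in H$; a nonnegative bump function supported near the omitted minimum integrates to zero against $F_*\nu_H$ but to something positive against its reflection, showing asymmetry. You instead compute an odd moment: with $d$ the minimal $\ell^1$-norm of an $n \in H^\perp$ having $\sum_i n_i$ odd (so $d$ itself odd), you expand $F^d$ and show $M_d > 0$. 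The combinatorial subtlety you flag --- that the minimality of $d$ forces $\lVert m \rVert_1 = d$ and thereby rules out mixed-sign contributions at a common coordinate, which could otherwise cancel --- is real and you handle it correctly; note also that your conclusion uses the strict positivity $c_0 = m_\pi(\chi_f) + \tfrac12 > 0$ and $c_j = 2m_{\theta_j}(\chi_f) > 0$, which hold by definition of the $\theta_j$. The paper's argument is shorter and softer; yours is more quantitative (it produces a specific nonzero odd moment, whose index $d$ measures the depth of the obstruction to symmetry) and does not rely on $F$ actually attaining its extremal value on $H$, so it would transfer more readily to a setting where that is not automatic.
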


\begin{proof} Denote by $A(\Delta_f)$ the closure of the $1$-parameter group $H := \lbrace n(\pi,\theta_{1},\ldots,\theta_{r}) : n\in\mathbb{Z}\rbrace/(2\pi\mathbb{Z})^{r+1}$ in the $(r+1)$-dimensional torus $\mathbb{T}^{r+1}:= (\mathbb{R}/2\pi\mathbb{Z})^{r+1}$. We first remark that by Pontryagin duality, for any $\underline z \in \mathbb{T}^{r+1}$, $\underline z \in A(\Delta_f)$ if and only if for every character $\underline k = (k_0, \dots, k_r) \in H^{\bot} \subset \Z^{r+1}$, one has $\underline k(\underline z) = k_0 z_0 + \dots + k_r z_r = 0$. Therefore, we just need to show that $\mu_{\Delta_f}$ is symmetric with respect to $m_0(\chi_f) + \tfrac{1}{2}$ if and only if $(\pi, \dots, \pi) \in A(\Delta_f)$, since $\underline k(\pi, \dots, \pi) = 0$ if and only if $\sum_{i=0}^r k_i$ is even.

By the Kronecker--Weyl Equidistribution Theorem (see for example \cite{DevinMeng}*{Lemma 2.2}),  $A(\Delta_f)$ is a subtorus of $\mathbb{T}^{r}$ and we have,
	for any continuous function $h: \mathbb{T}^{r}\rightarrow \mathbb{C}$,
	\begin{equation*}
\lim_{Y\rightarrow\infty}\frac{1}{Y}\sum_{n=0}^{Y}h(n\pi,n\theta_{1},\ldots,n\theta_{r})
	= \int_{A(\Delta_f)}h(a)\diff\omega_{A(\Delta_f)}(a)
	\end{equation*}
	where $\omega_{A(\Delta_f)}$ is the normalized Haar measure on $A(\Delta_f)$. 
	Then $\mu_{\Delta_f}$ is the push-forward measure of $\omega_{A(\Delta_f)}$ through
	$$\int_{\bR}h(t)\diff\mu_{\Delta_f}(t) = \int_{A(\Delta_f)}h\Big(m_0(\chi_f) + \tfrac12+ (m_{\pi}(\chi_f) + \tfrac12)e^{ia_0} + 2 \sum_{j=1}^{r}m_{\theta_j}(\chi_f)\cos(a_j)\Big)\diff\omega_{A(\Delta_f)}(a)$$ for any bounded continuous functions $h : \bR \to \bR$.
	
	Now, $\mu_{\Delta_f}$ is symmetric with respect to $m_0(\chi_f)+ \frac12$ if and only if, for every continuous function $h$, one has
	$$\int_{\mathbb{R}}h(2m_0(\chi_f)+1 - t)\diff\mu_{\Delta_f}(t) = \int_{\mathbb{R}}h(t)\diff\mu_{\Delta_f}(t). $$
Observe that
\begin{align*}
\int_{\mathbb{R}}h(2m_0(\chi_f) +1 - t)\diff\mu_{\Delta_f}(t) 
&= \int_{A(\Delta_f)}h\Big(m_0(\chi_f) + \tfrac12 - (m_{\pi}(\chi_f)+ \tfrac12)e^{ia_0} - 2\sum_{j=1}^{r}m_{\theta_j}(\chi_f) \cos(a_j)\Big)\diff\omega_{A(\Delta_f)}(a) \\
&= \int_{A(\Delta_f)}h\Big(m_0(\chi_f) + \tfrac12 +(m_{\pi}(\chi_f) + \tfrac12)e^{i(a_0 + \pi)} +  2\sum_{j=1}^{r}m_{\theta_j}(\chi_f) \cos(a_j+\pi)\Big)\diff\omega_{A(\Delta_f)}(a).
\end{align*}
So, if $(\pi,\dots,\pi) \in {A(\Delta_f)}$, using the fact that the Haar measure is translation-invariant, we deduce that $\mu_{\Delta_f}$ is symmetric with respect to $m_0(\chi_f) + \frac12$.

On the other hand, assume $(\pi,\dots,\pi) \notin A(\Delta_f)$. Then as $A(\Delta_f)$ is closed, and $m_{\pi}(\chi_f), m_{\theta_j}(\chi_f) \geq 0$ there exists $\epsilon >0$ such that for each $a \in A(\Delta_f)$ one has\footnote{See Lemma~\ref{Lemma: asymmetric} for an explicit bound.}
$$  (m_{\pi}(\chi_f)+\tfrac12)e^{ia_0} + 2\sum_{j=1}^{r}m_{\theta_j}(\chi_f) \cos(a_j) \geq \epsilon - m_{\pi}(\chi_f) - \tfrac12 -  2\sum_{j=1}^{r}m_{\theta_j}(\chi_f) .$$
Let $h_{\epsilon}$ be a non-zero, non-negative function, supported in an interval of length $\epsilon$ around $m_0(\chi_f) - m_{\pi}(\chi_f) - 2 \sum_{j=1}^{r}m_{\theta_j}(\chi_f) $.
Then 
$$
\int_{\mathbb{R}}h_{\epsilon}(t)\diff\mu_{\Delta_f}(t) = 0
$$
while 
$$
\int_{\mathbb{R}}h_{\epsilon}(2m_0(\chi_f) -t)\diff\mu_{\Delta_f}(t) > 0.
$$
In particular, we deduce that $\mu_{\Delta_f}$ is not symmetric with respect to $m_0(\chi_f)$.
\end{proof}

\subsection{Results about linear recurrence sequences}
\label{recseq}

We are interested in the positivity and zero-sets of the quantities $\Delta_f(n)$ defined in \ref{eq:Delta}. One of the key insight is that those quantities are linear recurrence sequences which will imply the limits in Definitions \ref{complete}, \ref{lower}, and \ref{wrongdirection} exist as shown in Corollaries \ref{Cor : density lower order}, \ref{cor:complete}.

\begin{definition}
A linear recurrence sequence of order $k \in \mathbb Z_{> 0}$ is a sequence $(a_n)_{n \in \N}$ such that there exist $u_0, \dots, u_{k-1} \in \bC$ satisfying $$a_{n+k} = u_{k-1} a_{n+k-1} + \dots + u_0 a_n$$ for all $n \in \N$. We define its zero-set as $\{n \in \mathbb Z_{> 0} \mid a_n = 0\}$.
\end{definition}

It is classical that any linear recurrence sequence can be expressed in a generalized power sum form and that, conversely, any generalized power sum satisfies a linear recurrence relation.
\begin{lemma} \label{Lemma:rec seq}
Let $f \in \mathcal{H}_n(\F_q)$, then the sequence $\Delta_f$ is a linear recurrence sequence. 
\end{lemma}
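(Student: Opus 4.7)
The plan is to recognize $\Delta_f(n)$ as a finite generalized power sum in bases of modulus one and to read off the recurrence directly from its characteristic polynomial.

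Concretely, from \eqref{eq:Delta} I would first rewrite
\[
\Delta_f(n) = \Big(m_0(\chi_f)+\tfrac12\Big)\cdot 1^n \;+\; \Big(m_\pi(\chi_f)+\tfrac12\Big)\cdot (-1)^n \;+\; \sum_{\theta_j\neq 0,\pi} m_{\theta_j}(\chi_f)\, \big(e^{i\theta_j(\chi_f)}\big)^n,
\]
so that $\Delta_f(n) = \sum_{\omega \in S} c_\omega\, \omega^n$, where $S \subset \mathbb{C}$ is the finite set of distinct bases $\{1,-1\}\cup\{e^{i\theta_j(\chi_f)} : \theta_j \neq 0,\pi\}$ (all of modulus one) and $c_\omega \in \mathbb{R}$ are the associated multiplicities. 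Note $S$ is stable under complex conjugation because the non-real $\theta_j$'s come in pairs $\pm\theta_j$ with equal multiplicity.

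Next, I would form the monic annihilator polynomial
\[
Q(T) = \prod_{\omega \in S}(T-\omega) = T^k - u_{k-1}T^{k-1} - \cdots - u_1 T - u_0 \in \mathbb{C}[T],
\]
with $k = \#S$. For each fixed $\omega\in S$, the sequence $n\mapsto \omega^n$ is killed by $Q$ acting through the shift, since $\omega^{n+k} - u_{k-1}\omega^{n+k-1} - \cdots - u_0\omega^n = \omega^n Q(\omega) = 0$. Taking the $c_\omega$-linear combination over $\omega\in S$ yields
\[
\Delta_f(n+k) = u_{k-1}\Delta_f(n+k-1) + \cdots + u_1\Delta_f(n+1) + u_0\Delta_f(n) \qquad (n\geq 0),
\]
which is exactly the defining relation of a linear recurrence sequence of order $k$, as required.

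There is no real obstacle: the whole argument is standard theory of generalized power sums. The only bookkeeping points are (i) grouping identical bases before forming $Q$ so that the $\omega$'s in $S$ are pairwise distinct, (ii) observing that the $u_i$ are allowed to lie in $\mathbb{C}$ by the definition (although conjugation-symmetry of $S$ forces $Q\in\mathbb{R}[T]$, which will be useful later when invoking arithmetic input on the $\theta_j$), and (iii) handling the degenerate cases $m_0(\chi_f)=0$ or $m_\pi(\chi_f)=0$ by simply omitting $1$ or $-1$ from $S$. This suffices to conclude that $\Delta_f$ is a linear recurrence sequence.
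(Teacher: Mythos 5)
Your proof is correct and is essentially the same approach as the paper's: the paper cites \cite{RecSeq}*{page~3} for the standard fact that a generalized power sum satisfies a linear recurrence, whereas you spell out the standard argument by forming the annihilator polynomial $Q(T)=\prod_{\omega\in S}(T-\omega)$ and verifying the recurrence directly. One tiny quibble with your bookkeeping point (iii): because of the additive constants $+\tfrac12$ in \eqref{eq:Delta}, the coefficients of $1^n$ and $(-1)^n$ are always nonzero (they equal $m_0(\chi_f)+\tfrac12$ and $m_\pi(\chi_f)+\tfrac12$), so $1$ and $-1$ always belong to $S$ and no degenerate case arises; this does not affect the validity of the argument.
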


\begin{proof}
    Let $P_f$ be the reversed zeta function of the curve $\mathcal C_f : y^2 = f(x)$ with $f \in \mathcal{H}_n(\F_q)$, and let $\chi_f$ be the primitive quadratic character modulo $f$ and $g$ be the genus of $\mathcal{C}_f$. The roots of $P_f$ are $\alpha_1, \dots, \alpha_{2g}$ which are of the form $\sqrt q e^{i \theta_i(\chi_f)}$ with some of them possibly $\pm \sqrt q$. Then, the conclusion follows from \cite{RecSeq}*{page~3}.
\end{proof}

Note that Lemma~\ref{Lemma:rec seq} is a well-known fact that follows directly from the rationality of the $L$-function. It is not a particularity of hyperelliptic curves. We stated and proved the result here, as this is the first time it is used in the context of studying Chebyshev's bias.

It turns out one can characterize the zero-set of such a linear recurrence sequence following the Skolem-Mahler-Lech theorem, which is stated below. A very short proof over $\Q$ (the Skolem case), which is the case of interest for us, using $p$-adic analysis, is given in \cite{RecSeq}*{Theorem 2.1}.

\begin{theorem}[Skolem-Mahler-Lech, \cite{RecSeq}*{Theorem 2.1}] \label{Thm:Sko}
Assume $(a_n)_{n \in \mathbb{N}}$ is a linear recurrence sequence over a field of characteristic zero. Then its zero-set is the union of a finite set and a finite number of arithmetic progressions.
\end{theorem}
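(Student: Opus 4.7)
The plan is to follow the classical $p$-adic approach of Skolem. First, I would reduce to a concrete algebraic setting: since a linear recurrence is determined by finitely many coefficients and initial values, I may replace the ambient field by a number field $K$ containing both the recurrence coefficients and all roots $\alpha_1, \dots, \alpha_s$ of the characteristic polynomial. Writing the sequence in generalized power sum form, $a_n = \sum_{i=1}^s P_i(n)\alpha_i^n$ with $P_i \in K[x]$, I turn the zero-set problem into a statement about zeros of such a sum.

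Next, I would pick a prime $\mathfrak{p}$ of $\mathcal{O}_K$ at which every $\alpha_i$ is a $\mathfrak{p}$-adic unit (only finitely many primes are excluded). Letting $p$ be the residue characteristic and $K_\mathfrak{p}$ the completion, the finiteness of the residue field gives an integer $N \geq 1$ with $\alpha_i^N \equiv 1 \pmod{\mathfrak{p}}$ for all $i$; taking $N$ to be a suitable multiple of the residue-field exponent (and, if $p=2$, enlarging to force congruence modulo $4$) I can arrange $\alpha_i^N \in 1 + p'\mathcal{O}_{K_\mathfrak{p}}$ with $p' = p$ (odd $p$) or $p' = 4$. This puts $\alpha_i^N$ strictly inside the domain of convergence of the $p$-adic logarithm, so that $(\alpha_i^N)^t := \exp\!\bigl(t\log(\alpha_i^N)\bigr)$ is well-defined for $t \in \Z_p$.

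I would then split $\N$ into residue classes modulo $N$. For each $r \in \{0,\dots,N-1\}$ I consider
$$F_r(t) = \sum_{i=1}^s P_i(Nt+r)\,\alpha_i^r\,(\alpha_i^N)^t,$$
which is a $p$-adic analytic function $F_r : \Z_p \to K_\mathfrak{p}$ satisfying $F_r(n) = a_{Nn+r}$ for all $n \in \N$. Strassmann's theorem (or, equivalently, the identity principle for $p$-adic analytic functions on $\Z_p$) says $F_r$ is either identically zero or has only finitely many zeros in $\Z_p$. In the first case the full arithmetic progression $r + N\N$ lies in the zero-set of $(a_n)$; in the second case that progression contributes only finitely many indices. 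Taking the union over the $N$ classes yields the desired description of the zero-set as a finite union of arithmetic progressions together with a finite set.

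The main obstacle I expect is the careful verification of $p$-adic analyticity of $F_r$: the polynomial factors $P_i(Nt+r)$ are automatically analytic, but one must check that the product with $(\alpha_i^N)^t$ stays within the radius of convergence uniformly on $\Z_p$, which is what forces the slightly delicate choice of $N$ at $p=2$. Once this is in place, Strassmann's theorem does the rest, and the qualitative structure of the zero-set falls out immediately.
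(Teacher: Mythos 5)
The paper does not reprove this theorem; it cites \cite{RecSeq}*{Theorem 2.1}, remarking that the $p$-adic argument is given there for the Skolem case over $\Q$. Your proposal is exactly that $p$-adic argument, and its core (choosing $\mathfrak{p}$ where the $\alpha_i$ are units, passing to a power $N$ so that $\alpha_i^N$ lies in the domain of the $p$-adic logarithm, interpolating each residue class by a $p$-adic analytic function $F_r$, and invoking Strassmann's theorem to get ``identically zero or finitely many zeros'') is correct.

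There is, however, a genuine gap in your opening reduction. The theorem is stated for an arbitrary field of characteristic zero, but you claim you can ``replace the ambient field by a number field $K$ containing both the recurrence coefficients and all roots.'' That only works if the recurrence data are algebraic numbers; if the coefficients or initial values are transcendental (e.g.\ over $\Q(t)$ or $\bR$), there is no number field containing them, and the passage to $\mathcal{O}_K$ and to a finite residue field collapses. What you have written is therefore a proof of the Skolem--Mahler case (algebraic coefficients), not of the full Lech statement. To repair it, one should first pass to the finitely generated subfield $F \subset K$ generated by the recurrence data over $\Q$, and then embed $F$ into some $\Q_p$ (this is Lech's contribution; one can use a Noether normalization/specialization argument or the Cassels embedding theorem, choosing $p$ so that the images of the $\alpha_i$ are $p$-adic units and no nontrivial ratio $\alpha_i/\alpha_j$ specializes to $1$). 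After that embedding, your $p$-adic interpolation and Strassmann step go through verbatim in $\Q_p$. For the application in this paper the gap is harmless, since the zeta-function coefficients are rational integers, so the Skolem--Mahler case is all that is used; but as a proof of the theorem as stated, the reduction step needs to be replaced.
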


This allows us to show that the density in the Definition \ref{lower} of a lower order bias always exists.

\begin{corollary}\label{Cor : density lower order}
The density $\mathrm{dens}(\Delta_f(n)=0)$ in Definition \ref{lower} for \textit{lower order bias} exists.
\end{corollary}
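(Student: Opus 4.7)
The plan is to combine Lemma~\ref{Lemma:rec seq} with the Skolem--Mahler--Lech theorem (Theorem~\ref{Thm:Sko}). By Lemma~\ref{Lemma:rec seq}, $\Delta_f$ is a linear recurrence sequence, and it takes values in $\mathbb{C}$ (in fact in $\mathbb{R}$), a field of characteristic zero. Applying Theorem~\ref{Thm:Sko}, its zero-set
\[
Z_f := \{n \in \mathbb{Z}_{>0} : \Delta_f(n) = 0\}
\]
decomposes as $Z_f = F \cup \bigcup_{i=1}^{k} P_i$, where $F$ is a finite set and each $P_i$ is an arithmetic progression with common difference $d_i \geq 1$.

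The remaining step is to check that any such set admits a natural density. The finite set $F$ contributes density zero. For the union of the arithmetic progressions, I would apply inclusion--exclusion: for any subfamily $\{P_{i_1}, \ldots, P_{i_s}\}$, the intersection $P_{i_1} \cap \cdots \cap P_{i_s}$ is either empty or, by the Chinese Remainder Theorem, again an arithmetic progression (with common difference dividing $\mathrm{lcm}(d_{i_1}, \ldots, d_{i_s})$), and therefore has a well-defined density equal to $0$ or $1/\mathrm{lcm}(d_{i_1}, \ldots, d_{i_s})$. Summing with the appropriate signs produces the desired natural density of $Z_f$, establishing the existence of $\mathrm{dens}(\Delta_f(n) = 0)$.

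Equivalently, and perhaps more cleanly, one can observe that after deleting the finitely many exceptional values in $F$, the indicator function $\mathbf{1}_{Z_f}$ is periodic with period $d := \mathrm{lcm}(d_1, \ldots, d_k)$. The existence of the Ces\`aro limit $\lim_{X \to \infty} X^{-1} \sum_{n \leq X} \mathbf{1}_{\Delta_f(n) = 0}$ is then immediate, and equals $d^{-1} \cdot \#\{n \in \{1, \ldots, d\} : n + d\mathbb{Z} \subset Z_f\}$. There is no serious obstacle: the deep input is Skolem--Mahler--Lech, already invoked via Theorem~\ref{Thm:Sko}, and the density computation afterwards is elementary.
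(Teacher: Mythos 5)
Your proof is correct and follows the paper's argument exactly: both invoke Lemma~\ref{Lemma:rec seq} to see that $\Delta_f$ is a linear recurrence sequence and then apply the Skolem--Mahler--Lech theorem (Theorem~\ref{Thm:Sko}) to conclude the zero-set is a finite set together with finitely many arithmetic progressions, hence has a natural density. Your write-up simply fills in the elementary density computation (inclusion--exclusion / eventual periodicity) that the paper leaves implicit.
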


\begin{proof}
By Lemma \ref{Lemma:rec seq}, $\Delta_f$ is a linear recurrence sequence. Its zero-set is a finite union of arithmetic progressions and a finite set following Theorem \ref{Thm:Sko}, therefore it admits a natural density.
\end{proof}

Another useful fact is the following result, showing that the densities considered for complete biases and reversed biases exist.

\begin{theorem}[\cite{BeGer}*{Theorem 1}]
Let $(a_n)_{n \in \mathbb N}$ be a linear recurrence sequence of real numbers. Then its positivity set $\{n \in \mathbb N \mid a_n > 0\}$ admits a natural density.
\end{theorem}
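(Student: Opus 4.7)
The plan is to exhibit the density via a Kronecker--Weyl equidistribution analysis of the leading term in the Binet expansion of $(a_n)$. First I would write
$$a_n = \sum_{i=1}^{k} P_i(n)\,\alpha_i^n$$
with the $\alpha_i \in \mathbb C^\times$ distinct, $P_i \in \mathbb C[x]$ nonzero, and the data closed under complex conjugation so that $a_n \in \mathbb R$. Group the roots by modulus $\rho_1 > \rho_2 > \cdots > \rho_L$ with $S_\ell = \{i : |\alpha_i| = \rho_\ell\}$. Inside the top block set $d_1 = \max_{i \in S_1}\deg P_i$, write $\alpha_i = \rho_1 e^{i\theta_i}$ for $i \in S_1$, and let $c_i$ be the leading coefficient of $P_i$ when $\deg P_i = d_1$ (and $c_i = 0$ otherwise). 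Then
$$\frac{a_n}{\rho_1^n\,n^{d_1}} \;=\; R_1(n)\;+\;o(1), \qquad R_1(n) = \sum_{i \in S_1} c_i\,e^{in\theta_i}.$$

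By Kronecker--Weyl, the orbit $\{(n\theta_i)_{i \in S_1} \bmod 2\pi : n \in \mathbb N\}$ equidistributes in a closed subgroup $T \subseteq \mathbb T^{|S_1|}$, a finite union of cosets of a subtorus, with respect to the normalized Haar measure $\omega_T$. The function $\Phi(\underline x) = \sum_{i\in S_1} c_i\, e^{ix_i}$ is real-analytic and, by the conjugate-pairing of the dominant roots, real-valued on $T$. If $\Phi \not\equiv 0$ on $T$, then $\{\Phi = 0\}\cap T$ is an analytic subvariety of positive codimension, hence $\omega_T$-null; equidistribution then yields $\mathrm{dens}\{n : R_1(n) > 0\} = \omega_T(\Phi > 0)$. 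Transferring back to $a_n$: for any $\epsilon > 0$, the density of $\{n : |R_1(n)| < \epsilon\}$ equals $\omega_T(|\Phi|<\epsilon)$, which tends to $0$ as $\epsilon \downarrow 0$ by dominated convergence, and outside this thin set the $o(1)$-error cannot flip signs. Hence $\mathrm{dens}\{n : a_n > 0\} = \omega_T(\Phi > 0)$ exists.

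It remains to treat the case $\Phi \equiv 0$ on $T$. Then $R_1(n) \equiv 0$ as a function of $n$, and one recurses on the residual sequence $a_n - \rho_1^n n^{d_1} R_1(n)$, which is again an LRS with a lexicographically smaller pair (largest modulus, sum of degrees inside the dominant block); induction on this pair terminates. The principal obstacle is precisely this recursion: because the orbit may only be dense in a \emph{proper} subtorus of $\mathbb T^{|S_1|}$, the character sum $\Phi$ can vanish identically on $T$ through genuine cancellation among the $c_i\, e^{i x_i}$ even when the individual $c_i$ are all nonzero, and one must verify that the residual remains a bona fide LRS amenable to the same analysis. Skolem--Mahler--Lech (Theorem~\ref{Thm:Sko}) supplies auxiliary control on any zero sets that arise during the induction, ensuring compatibility with natural density.
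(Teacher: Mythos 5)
The overall strategy — Binet expansion, isolate the dominant block, Kronecker--Weyl equidistribution on the orbit closure $T$, then use real-analyticity of $\Phi$ to control the zero set — is the right shape of argument. However, there is a genuine gap centered on the dichotomy you set up.

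You claim that either $\Phi \not\equiv 0$ on $T$, in which case $\{\Phi = 0\}\cap T$ is $\omega_T$-null, or $\Phi \equiv 0$ on $T$, in which case you recurse. Both horns have problems. For the first horn: $T$ is the closure of a cyclic orbit in $\mathbb T^{|S_1|}$, hence in general a \emph{disjoint union} of finitely many cosets of a subtorus $T_0$, not a connected manifold. The real-analyticity argument (nonzero analytic function has null zero set) only applies component by component; $\Phi$ can vanish identically on some cosets and not others, yielding $\omega_T(\Phi = 0) > 0$ even though $\Phi \not\equiv 0$ on $T$. Concretely, with $\alpha_1 = -\rho_1$, $\alpha_2 = \rho_1$, $c_1 = c_2 = 1$, the orbit closure is $\{(0,0),(\pi,0)\}$, and $R_1(n) = (-1)^n + 1$ vanishes exactly for odd $n$ — half of $T$. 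Your argument produces no density here. For the second horn: $\Phi \equiv 0$ on $T$ would give $R_1(n) = \sum_{i\in S_1} c_i (\alpha_i/\rho_1)^n = 0$ for all $n$, and since the $\alpha_i$ in $S_1$ are distinct this forces every $c_i = 0$ by the invertibility of the Vandermonde system — contradicting the definition of $d_1$ as the maximal degree. So the case you propose to recurse on never arises. Moreover, the proposed residual $a_n - \rho_1^n n^{d_1} R_1(n)$ is not a reduction at all: when $R_1 \equiv 0$ it equals $a_n$ itself, so the ``lexicographically smaller pair'' does not decrease and the induction does not make progress.

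The missing idea is to split $\mathbb N$ into the arithmetic progressions $n \equiv j \pmod m$ corresponding to the cosets $T_0 + jv$ of $T_0$ in $T$. On each progression the orbit closure is connected, so the null-set dichotomy is valid; on those progressions where $\Phi$ vanishes identically on the coset one subtracts the entire block $\sum_{i\in S_1} P_i(n)\alpha_i^n$ restricted to that progression (which is again a linear recurrence along that progression with a strictly smaller dominant modulus) and recurses. The density of $\{a_n>0\}$ is then the average of the per-progression densities. Without this coset decomposition the recursion neither terminates nor covers the cases that actually occur. The appeal to Skolem--Mahler--Lech, as stated, does not repair this: it controls the zero set $\{a_n = 0\}$, not the structure of $\{\Phi = 0\} \cap T$ on a disconnected orbit closure.
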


\begin{corollary} \label{cor:complete}
The densities $\mathrm{dens}(\Delta_f>0)$ and $\mathrm{dens}(\Delta_f<0)$ in Definitions \ref{complete} and \ref{wrongdirection} exist.
\end{corollary}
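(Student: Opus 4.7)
The plan is to observe that Corollary~\ref{cor:complete} is essentially a direct consequence of the Bell--Gerhold theorem, once we check that $\Delta_f$ is a linear recurrence sequence with \emph{real} values (the cited theorem requires real coefficients). The latter hypothesis is not trivially in the previous lemma, so we need to justify it separately.

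First I would recall from Lemma~\ref{Lemma:rec seq} that $\Delta_f$ is a linear recurrence sequence in $\bC$, coming from the generalized power sum expression \eqref{eq:Delta}. Second I would verify that $\Delta_f(n) \in \bR$ for every $n$: the terms $m_0(\chi_f) + \tfrac12$ and $(m_\pi(\chi_f)+\tfrac12)(-1)^n$ are manifestly real, while in the sum $\sum_{\theta_j \neq 0,\pi} m_{\theta_j}(\chi_f) e^{in\theta_j(\chi_f)}$ the angles $\theta_j$ come in conjugate pairs $\pm \theta_j$ with equal multiplicities $m_{\theta_j}(\chi_f) = m_{-\theta_j}(\chi_f)$, so each pair contributes $2m_{\theta_j}(\chi_f)\cos(n\theta_j(\chi_f)) \in \bR$.

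With $\Delta_f$ now identified as a real linear recurrence sequence, the Bell--Gerhold theorem \cite{BeGer}*{Theorem 1} cited just above applies directly and yields the existence of the natural density of $\{n \in \N \mid \Delta_f(n) > 0\}$, which is precisely $\mathrm{dens}(\Delta_f > 0)$ in Definition~\ref{complete}. For Definition~\ref{wrongdirection}, note that $-\Delta_f$ satisfies the same linear recurrence as $\Delta_f$ (it is the generalized power sum obtained by negating all coefficients in \eqref{eq:Delta}) and is also real-valued, so applying Bell--Gerhold to $-\Delta_f$ gives the existence of the natural density of $\{n \in \N \mid \Delta_f(n) < 0\}$, namely $\mathrm{dens}(\Delta_f < 0)$.

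There is no real obstacle here; the only subtlety is ensuring the realness hypothesis of Bell--Gerhold, which follows from the conjugate symmetry of the non-real zeros of $\mathcal{L}(u,\chi_f)$. One could alternatively give a more elementary self-contained argument by combining the Skolem--Mahler--Lech theorem (Theorem~\ref{Thm:Sko}) with an equidistribution argument on the closure $A(\Delta_f)$ of the one-parameter subgroup $\{n(\pi,\theta_1,\dots,\theta_r)\}$ in $\mathbb{T}^{r+1}$, along the lines already used in the proof of Lemma~\ref{lem : symmetry}, but invoking Bell--Gerhold is shorter and sufficient for our purposes.
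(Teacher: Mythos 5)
Your proposal is correct and follows exactly the same route as the paper: Corollary~\ref{cor:complete} is stated immediately after the Bell--Gerhold theorem with no further proof, since it is a direct consequence once one notes that $\Delta_f$ and $-\Delta_f$ are real-valued linear recurrence sequences (which, as you correctly observe, follows from the conjugate-pair symmetry $\theta_{j+r} = -\theta_j$ and $m_{\theta_j} = m_{-\theta_j}$ recorded in Section~2.1). You merely spell out the realness check that the paper leaves implicit.
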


For certain kinds of linear recurrence sequences, called non-degenerate linear recurrence sequences, we know their zero-sets are finite. We introduce the following more general terminology for the character $\chi_f$ inspired by \cite{RecSeq}*{Section 1.1.9} because it will be an important condition to study in the proofs of \eqref{item lower order th main} and \eqref{item wrong direction th main} in Theorem \ref{thm:main}.

\begin{definition}\label{nondegenerate}
We say that $\chi_f$ is non-degenerate when none of $\frac{\alpha_i}{\alpha_j}$, for $1 \leq i \neq j \leq r$, and none of $\frac{\overline{\alpha_i}}{\alpha_j}$, for $1 \leq i, j \leq r$, is a root of unity.
\end{definition}

Using Definition \ref{nondegenerate}, we prove the following Lemma which will be of important use in the study of lower order bias in Section \ref{sec : lower order}.

\begin{lemma}\label{lem : non degenerate lower order bias}
Assume $\chi_f$ is non-degenerate as in Definition \ref{nondegenerate}. Then the zero-set of $\Delta_f(n)$ is finite.
\end{lemma}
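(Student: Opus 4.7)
The plan is to realize $\Delta_f(n)$ as a generalized power sum: from \eqref{eq:Delta},
\[
\Delta_f(n) = \big(m_0 + \tfrac12\big)\cdot 1^n + \big(m_\pi + \tfrac12\big)(-1)^n + \sum_{j=1}^{2r} m_{\theta_j}\,\big(e^{i\theta_j}\big)^n,
\]
so by Lemma \ref{Lemma:rec seq} it is a linear recurrence sequence. By Theorem \ref{Thm:Sko}, its zero set is a finite union of a finite set and arithmetic progressions, and it therefore suffices to exclude infinite arithmetic progressions from the zero set.

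Suppose for contradiction that $\{am+b:m\geq 0\}$ is contained in the zero set. Then the subsequence $\widetilde{\Delta}(m):=\Delta_f(am+b)$ is identically zero, and as an LRS in $m$ it has characteristic roots $1$, $(-1)^a$, and $e^{ia\theta_j}$ ($j=1,\dots,2r$), with coefficients $m_0+\tfrac12$, $(m_\pi+\tfrac12)(-1)^b$, and $m_{\theta_j}e^{ib\theta_j}$ respectively. The crux of the argument is to check, using the non-degeneracy hypothesis, that these roots are pairwise distinct---except that $(-1)^a=1$ when $a$ is even, in which case the first two contributions combine. This reduces to showing $e^{ia\theta_j}\notin\{1,-1\}$ and $e^{ia(\theta_j-\theta_k)}\neq 1$ for $j\neq k$. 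The first follows because $e^{-2i\theta_j}=\overline{\alpha_j}/\alpha_j$ is not a root of unity (take $i=j$ in Definition \ref{nondegenerate}), hence neither is $e^{i\theta_j}$. For the second, after writing $\overline{\alpha_\ell}=q/\alpha_\ell$, every $e^{i(\theta_j-\theta_k)}$ translates (depending on the signs of $\theta_j$ and $\theta_k$) into one of $\alpha_\ell/\alpha_m$ with $\ell\neq m$, or $\overline{\alpha_\ell}/\alpha_m$ for some $\ell,m$ (possibly equal), or a complex conjugate of such; none of these is a root of unity by hypothesis.

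Once the characteristic roots are distinct, $\widetilde{\Delta}\equiv 0$ forces every coefficient to vanish; but provided $r\geq 1$ the coefficient of $e^{ia\theta_1}$ is $m_{\theta_1}e^{ib\theta_1}\neq 0$, a contradiction. (The degenerate case $r=0$ gives the closed form $\Delta_f(n)=(m_0+\tfrac12)+(m_\pi+\tfrac12)(-1)^n$, which we can analyze directly; the lemma should be understood as excluding the trivial sub-case $m_0=m_\pi$ here, in which all odd $n$ are zeros.) The main technical obstacle is the distinctness step: verifying that the two forms of ratios controlled by Definition \ref{nondegenerate}, combined with the archimedean relation $|\alpha_j|^2=q$, genuinely cover all ways in which the ratio $e^{i(\theta_j-\theta_k)}$---or an $a$-th power thereof---could accidentally equal $1$.
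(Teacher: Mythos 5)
Your proof is correct, and it takes a genuinely different route from the paper's. The paper splits $\Delta_f$ into its even-index and odd-index subsequences, subtracts off the constant contribution of the roots $\pm 1$, observes that the resulting sequences are non-degenerate LRS when $\chi_f$ is non-degenerate, and cites a black-box consequence of Skolem--Mahler--Lech (from \cite{RecSeq}*{page~25}) that such a sequence takes any given value only finitely many times. You instead apply the full Skolem--Mahler--Lech theorem (Theorem~\ref{Thm:Sko}) directly to $\Delta_f$, reduce to showing no infinite arithmetic progression $\{am+b\}$ lies in the zero set, and then reprove the relevant non-degenerate case by hand: along such a progression the restricted sequence is a generalized power sum with characteristic roots $1,(-1)^a,e^{ia\theta_j}$, and you verify via the ratios in Definition~\ref{nondegenerate} (together with $\overline{\alpha_\ell}=q/\alpha_\ell$) that these roots are pairwise distinct, so identical vanishing would force all coefficients (in particular $m_{\theta_1}e^{ib\theta_1}\neq 0$) to vanish --- a contradiction. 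Your distinctness check is complete and correct. Both approaches have the same blind spot for the case $r=0$ with $m_0(\chi_f)=m_\pi(\chi_f)\geq 1$ (where $\Delta_f(2n+1)\equiv 0$), which you are right to flag explicitly: under the literal reading of Definition~\ref{nondegenerate} the hypothesis is vacuous when $r=0$, whereas the broader usage of ``degenerate'' later in the paper (e.g.\ in the proof of Lemma~\ref{Lemma bound quotient}, which considers ratios among \emph{all} roots including $\pm\sqrt{q}$) would classify this case as degenerate and restore the lemma. In practice, the $q$-symplectic constraint forces $m_\pi$ (hence, when $m_0=m_\pi$, also $m_0$) even, and $r=0$ forces $m_0=m_\pi=g$, so this configuration is quite constrained; but the logical gap is there, and your treatment of it is the more careful one.
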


\begin{proof}
By \cite{RecSeq}*{page 25}, a non-degenerate linear recurrence sequence, that is, a sequence whose characteristic roots $\beta_1, \dots, \beta_d$ satisfy that no $\frac{\beta_i}{\beta_j}$ is a root of unity for $i \neq j$, takes a given value only finitely many times. In our case however, the characteristic roots are $\frac{\alpha_1}{\sqrt q}, \dots, \frac{\alpha_r}{\sqrt q}, \frac{\overline{\alpha_1}}{\sqrt q}, \dots, \frac{\overline{\alpha_r}}{\sqrt q}$, but also $1$ and $-1$ because of the terms $m_0(\chi_f) + \frac{1}{2}$ and $\left(m_{\pi}(\chi_f) + \frac{1}{2}\right)(-1)^n$ in $\Delta_f(n)$, and obviously $\frac{1}{-1}$ is a root of unity. But it is easily seen that $\Big(\Delta_f(2n) - \left(m_0(\chi_f) + \frac{1}{2}\right) - \left(m_{\pi}(\chi_f) + \frac{1}{2}\right)\Big)_{n\geq 0}$ and $\Big(\Delta_f(2n+1) - \left(m_0(\chi_f) + \frac{1}{2}\right) +\left(m_{\pi}(\chi_f) + \frac{1}{2}\right)\Big)_{n\geq 0}$ are linear recurrence sequences (\cite{RecSeq}*{Theorem 1.1} and \cite{RecSeq}*{Theorem 1.3}), and when $\chi_f$ is non-degenerate according to Definition \ref{nondegenerate}, then those are non-degenerate as linear recurrence sequences. In particular, they respectively take the values $-\left(m_0(\chi_f) + \frac{1}{2}\right) - \left(m_{\pi}(\chi_f) + \frac{1}{2}\right)$ and $-\left(m_0(\chi_f) + \frac{1}{2}\right) + \left(m_{\pi}(\chi_f) + \frac{1}{2}\right)$ a finite number of times, which proves that $\Delta_f(n)$ vanishes a finite number of times.
\end{proof}

\begin{remark}
In the non-degenerate case, we could replace the densities in Definitions \ref{complete} and \ref{wrongdirection} by the corresponding densities for $\Pi(n; \chi_f)$ since they exist and coincide with the ones about $\Delta_f$ in that case following the fact that the density $\mathrm{dens}(\Delta_f(n)=0)$ in Definition \ref{lower} is zero.
\end{remark}

\subsection{A large sieve statement}
\label{subsec : large sieve}

Let $\CSp_{2g}(\F_\ell)$ be the group of symplectic similitudes\footnote{This is sometimes called the general symplectic group and denoted as $\text{GSp}$}  in $\GL_{2g}(\F_{\ell})$. It contains matrices $M \in \GL_{2g}(\F_{\ell})$ such that there exists a scalar $m \in \F_{\ell}^*$, called the multiplicator of $M$, satisfying $M^{\top} J M = m J$ with $J = \begin{pmatrix}0 & I_g\\ - I_g & 0
\end{pmatrix}.$ When $M$ is a symplectic similitude with multiplicator $m$, we say that $M$ is $m$-symplectic.
In this paper, following \cite{Kowalski book}*{page 158} but with a reversed convention, we call $m$-symplectic, any monic polynomial $P$ of even degree $2g$ satisfying $$P(T) = m^{-g}T^{2g}P\left(\frac{m}{T}\right).$$ In particular, for $f \in \mathcal H_n(\mathbb F_q)$ the polynomial $P_f$ as defined in \eqref{eq def P_f} is $q$-symplectic.

Let us first state the result Theorem \ref{Th Frobenius large sieve new version} for a general setting, using Perret-Gentil's improvement of Kowalski's large sieve for Frobenius \cite{Perret-GentilANT}*{Theorem~5.14.(ii).(c)} and later apply it to our setting in Proposition \ref{Prop Frobenius large sieve}.

The theorem is given for a general $U/\mathbb{F}_p$ smooth affine geometrically connected algebraic variety of dimension $d \geq 1$ over $\mathbb{F}_p$. We assume that  $U$ has a compactification where it is the complement of a divisor with normal crossing.
We denote by $\bar{\eta}$  a geometric generic point of $U$.

Let us fix $\Lambda$ a set of primes different from $2$ and $p$ of density $1$. We study a family $\mathcal{F}_{\ell}$ of lisse sheaves  of $\mathbb{F}_{\ell}$-vector spaces on $U$, corresponding to continuous homomorphisms
$\rho_{\ell} : \pi_1 (U, \bar{\eta}) \rightarrow \GL_r(\mathbb{F}_{\ell}  )$, for $\ell \in \Lambda$ that arise from a compatible system (as in \cite{Kowalski book}*{Definition~8.7}).
Then for $\ell \in \Lambda$,  we denote $G_{\ell} = \rho_{\ell}(\pi_1 (U, \bar{\eta}))$ and $G_{\ell}^{\mathrm{geo}} = \rho_{\ell}(\pi_1 (U_{\overline{\mathbb{F}}_q}, \bar{\eta}))$.

\begin{theorem}
\label{Th Frobenius large sieve new version}
Let $p$ be a prime number and $q>1$ be a power of $p$.
For each $\ell \in \Lambda$ fix $\Omega_\ell \subset G_{\ell}$ a conjugacy invariant subset, in the coset  $\rho_{\ell}(\frob_{f,q})G_{\ell}^{\mathrm{geo}}$.

Then, for any $L\geq 1$ and for any $q$ which is a power of $p$, one has
\begin{align*}
    \frac{\lvert \lbrace f \in U(\F_q) \mid \rho_{\ell}(\frob_{f,q}) \notin \Omega_{\ell} \text{ for all } \ell \leq L, \ell\in \Lambda \rbrace\rvert}{\lvert U(\F_q)\rvert} \leq \Big(1 + \frac{(L+1)^{A} C}{q^{\frac12}} \Big) H^{-1},
\end{align*}
with $C=C(U_{\overline{\mathbb{F}}_q},\lbrace\rho_\ell\rbrace_{\ell \in \Lambda})$ a constant that depends only on $U_{\overline{\mathbb{F}}_q}$ and on the family $\lbrace\rho_\ell\rbrace_{\ell \in \Lambda}$ (in particular not on $q$, but certainly on $d$),
\begin{align}\label{H}
    H = \sum_{\substack{m \in \mathcal{L} \\ \psi(m)\leq L+1}} \prod_{\ell \mid m} \frac{\lvert \Omega_\ell \rvert}{\lvert G_{\ell}^{\mathrm{geo}} \rvert - \lvert \Omega_{\ell}\rvert},
\end{align}
where $\mathcal{L}$ is the set of squarefree integers whose prime factors are all in $\Lambda$, $\psi(m) := \prod_{\ell\mid m}(\ell +1)$,
and when $ G_{\ell}^{\mathrm{geo}} = \Sp(2g,\mathbb{F}_{\ell})$ one can take $A = 2g^2 + g +2$. 
\end{theorem}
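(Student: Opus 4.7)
The plan is to derive Theorem \ref{Th Frobenius large sieve new version} as a direct specialization of Perret-Gentil's improvement of Kowalski's large sieve for Frobenius, namely \cite{Perret-GentilANT}*{Theorem~5.14.(ii).(c)}, to the compatible system $\{\rho_\ell\}_{\ell \in \Lambda}$ of $\F_\ell$-sheaves on $U$. First I would verify that our hypotheses fit exactly into that framework: the variety $U/\F_p$ is smooth, affine, geometrically connected and admits a compactification along a normal-crossings divisor; the family $\{\mathcal{F}_\ell\}_{\ell \in \Lambda}$ is lisse and forms a compatible system in the sense of \cite{Kowalski book}*{Definition~8.7}; and for each $\ell \in \Lambda$ the sifting set $\Omega_\ell \subset G_\ell$ is conjugation-invariant and lies inside the correct Frobenius coset $\rho_\ell(\frob_{f,q}) G_\ell^{\mathrm{geo}}$. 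These are precisely the standing assumptions for Perret-Gentil's theorem.

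Second, I would match the output of Perret-Gentil's theorem against our claim. The general form of the conclusion is an upper bound of the shape $\Delta \cdot H^{-1}$ for the sifted density, where $H$ is the usual sieve sum and $\Delta$ is the ``large sieve constant''. The local weights $|\Omega_\ell|/(|G_\ell^{\mathrm{geo}}| - |\Omega_\ell|)$ appearing in \eqref{H} come from the standard inversion argument in the combinatorial setup of the sieve, and the squarefree truncation $\psi(m)\leq L+1$ reflects Selberg's choice of sieve weights restricted to the primes $\ell \leq L$ in $\Lambda$. Perret-Gentil's improvement provides the bound
\begin{equation*}
    \Delta \leq 1 + (L+1)^A \, C \, q^{-1/2},
\end{equation*}
in which the factor $q^{-1/2}$ is a consequence of Deligne's Riemann Hypothesis applied to the Frobenius trace sums, and $C$ depends only on $U_{\overline{\F}_q}$ and on the ramification data of the compatible system $\{\rho_\ell\}_{\ell \in \Lambda}$ (hence not on $q$).

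Third, I would identify the exponent $A$ in the symplectic case $G_\ell^{\mathrm{geo}} = \Sp(2g, \F_\ell)$. The input here is a uniform-in-$\ell$ estimate on the sum over irreducible representations of $\Sp(2g,\F_\ell)$ of (some power of) their dimensions, which is the quantity that controls the large-sieve constant in Perret-Gentil's formulation. Since the number of irreducible representations of $\Sp(2g,\F_\ell)$ grows polynomially in $\ell$ with exponent comparable to $\dim \Sp_{2g} = 2g^2 + g$, and the sieve support contributes an extra factor $(L+1)^2$ from the double sum over pairs of primes $\ell_1, \ell_2 \leq L$ through $\psi(m)$, one arrives at $A = 2g^2 + g + 2$.

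The main obstacle I anticipate is the bookkeeping of the constant $C$ and the precise tracking of the representation-theoretic input for $\Sp(2g, \F_\ell)$, in particular making sure that the dependence on $p$ in Perret-Gentil's improvement is correctly absorbed into $C$ (this is where the uniformity in $p$ is lost, as noted in the text following Theorem~\ref{thm:main}). Once these constants are identified, however, the theorem follows by direct substitution into \cite{Perret-GentilANT}*{Theorem~5.14.(ii).(c)} and does not require any new geometric input beyond the cited result.
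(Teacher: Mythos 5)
Your overall strategy matches the paper's: use the Kowalski--Perret-Gentil large sieve machinery, with Perret-Gentil's improvement supplying the gain. The main difference is procedural. The paper does \emph{not} quote \cite{Perret-GentilANT}*{Theorem~5.14.(ii).(c)} as a black box; instead it reassembles the estimate from \cite{Kowalski book}*{Proposition~2.9}, \cite{Kowalski book}*{Corollary~8.10}, and the proof of \cite{Kowalski book}*{Proposition~8.8}, and at the one crucial step substitutes Perret-Gentil's bound from \cite{Perret-GentilANT}*{Section~5D2}, namely $\sigma'_{c}(U_{\overline{\F}_q},\mathcal{W}(\pi,\tau)) \ll_{U,\rho} \dim\pi\,\dim\tau$, in place of Kowalski's original bound on the Betti numbers. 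If you do want to cite Perret-Gentil's theorem directly you would need to check that his statement is formulated with exactly these local weights, the same normalization of the sieve support, and the same exponent; the authors apparently found it cleaner to reassemble.

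Your heuristic for $A = 2g^2+g+2$ is not correct as stated and would not survive being written out. The number of irreducible representations of $\Sp(2g,\F_\ell)$ grows like $\ell^g$ (polynomial of degree equal to the rank), not like $\ell^{\dim\Sp_{2g}}$; it is $\sum_\pi(\dim\pi)^2 = \lvert\Sp(2g,\F_\ell)\rvert \sim \ell^{2g^2+g}$ that has that exponent, and the sieve constant is not controlled by a sum of squares of dimensions. The actual accounting, following \cite{Kowalski book}*{(8.13)}, uses the two inputs $\max_{\pi}\dim\pi\leq(\ell+1)^{g^2}$ and $\sum_{\pi}\dim\pi\leq(\ell+1)^{g^2+g+1}$, giving, for each pair $(m,n)$ with $\psi(m),\psi(n)\leq L+1$, a contribution $\leq(L+1)^{g^2}\cdot(L+1)^{g^2+g+1}$; one then picks up a single extra factor of $L+1$ from the number of admissible $n$ (not a factor $(L+1)^2$ from a ``double sum over pairs of primes''), arriving at $(L+1)^{2g^2+g+2}$. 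Your conclusion is right, but the mechanism is different, and the distinction matters if one ever wants to sharpen $A$ or adapt the argument to a different monodromy group.
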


\begin{proof}
We are in the setting of \cite{Kowalski book}*{Chapter 8}, following the ideas and notations of loc. cit. It follows from \cite{Kowalski book}*{Proposition~2.9} as in \cite{Kowalski book}*{Corollary~8.10} that
$$\#\lbrace f \in U(\F_q) \mid \rho_{\ell}(\frob_{f,q}) \notin \Omega_{\ell} \text{ for all } \ell \leq L, \ell\in \Lambda \rbrace
\leq \Delta H^{-1},$$
where $H$ is as defined in \eqref{H} and
$\Delta$ is the large sieve constant.
As in the proof of \cite{Kowalski book}*{Proposition~8.8} we obtain that
$$\Delta \leq \max_{\substack{m\in \mathcal{L}\\ \psi(m)\leq L+1}}\max_{\pi \in \Pi_m^*}\sum_{\substack{n\in \mathcal{L}\\ \psi(n)\leq L+1}}\sum_{\tau \in \Pi_n^*} \lvert W(\pi,\tau)\rvert$$
with 
$$W(\pi,\tau) = \delta((m,\pi),(n,\tau))q^{d} + O(\sigma'_{c}(\bar{U},\mathcal{W}(\pi,\tau))q^{d-\frac12})$$ 
where $\mathcal{W}(\pi,\tau)$ is the lisse sheaf corresponding to the representation $[\pi,\bar{\tau}]$ as defined in \cite{Kowalski book}*{(3.8)}, and $\sigma'_{c}$ is the sum of all except the largest Betti numbers as defined in  \cite{Kowalski book}*{page~166}.
In \cite{Perret-GentilANT}*{Section~5D2},  Perret-Gentil improves the bound on  $\sigma'_{c}(U_{\overline{\mathbb{F}}_q},\mathcal{W}(\pi,\tau))$ compared to the bound of \cite{Kowalski book}*{Proposition~8.8} in the case of the complement of a divisor with normal crossing. He obtains
$$\sigma'_{c}(U_{\overline{\mathbb{F}}_q},\mathcal{W}(\pi,\tau)) \ll_{U,\rho} \dim [\pi,\bar{\tau}] = \dim \pi \dim \tau, $$
where the implicit constant depends on $U_{\overline{\mathbb{F}}_q}$ and on the family $\lbrace\rho_\ell\rbrace_{\ell \in \Lambda}$ (in particular not on $q$, but certainly on $d$ and on $p$).
Thus, we have
\begin{align*}
    \Delta \leq q^d + q^{d-\frac12}C(U,\rho)\max_{\substack{m\in \mathcal{L}\\ \psi(m)\leq L+1}}\max_{\pi \in \Pi_m^*}\sum_{\substack{n\in \mathcal{L}\\ \psi(n)\leq L+1}}\sum_{\tau \in \Pi_n^*} \dim \pi \dim \tau. 
\end{align*}
To conclude, we use \cite{Kowalski book}*{(8.13)}, and multiplicativity. 
In particular, representations of $\Sp(2g,\mathbb{F}_{\ell})$ satisfy $\dim \pi \leq (\ell +1)^{g^2}$ and  $\sum_{\pi \in \Pi_{\ell}^*} \dim \pi \leq (\ell +1)^{g^2 +g +1}$.
\end{proof}

To improve on Kowalski's bound \eqref{Kowalski} in Theorem \ref{thm:Kowalski}, we are going to use the following large sieve result which follows from Theorem~\ref{Th Frobenius large sieve new version} applied to the variety of configurations, with the compatible system given by the action of the Frobenius.

\begin{proposition}\label{Prop Frobenius large sieve}
Let $p$ be a prime number and $q>1$ be a power of $p$. 
Let $n\geq 2$, $\mathscr{H}_n$ be the configuration space of monic squarefree polynomials of degree $n$ and $\Lambda$ be the set of primes different from $2$ and $p$.

For each $\ell \in \Lambda$, the action of the Frobenius endomorphism $\frob_{f,q}$ on $\textup{H}^1_{\text{\'et}}(\mathcal{C}_f,\mathbb{Z}_\ell)$ gives a representation $\rho_\ell : \pi_1(\mathscr{H}_n,\bar\eta)\rightarrow \GL_{2g}(\F_\ell)$
for $\bar\eta$ a geometric generic point and for all $\ell \in \Lambda$ they form a compatible system (as in \cite{Kowalski book}*{Definition~8.7}), with image equal to the set of $q$-symplectic similitudes following the work of Hall \cite{Hall}.

For every $\ell \in \Lambda$, let $\Omega_\ell \subset \CSp_{2g}(\F_\ell)$ be a conjugacy invariant subset such that the multiplicator of every element of $\Omega_\ell$ is $q$.

Then, one has
\begin{align*}
    \frac{\#\lbrace f \in \mathcal{H}_n(\F_q) \mid \rho_{\ell}(\frob_{f,q}) \notin \Omega_{\ell} \text{ for all } \ell < q^{\frac{1}{2A}}, \ell\in \Lambda \rbrace}{\lvert \mathcal{H}_n(\F_q)\rvert} \ll_{p, n} \    \Big( \sum_{\substack{\psi(m)\leq q^{\frac1{2A}} \\ m \in \mathcal{L}}} \prod_{\ell \mid m}\frac{\lvert \Omega_\ell \rvert}{\lvert \Sp_{2g}(\F_\ell)\rvert - \lvert \Omega_{\ell}\rvert}\Big)^{-1},
\end{align*}
where the implicit constant depends only on $n$ and $p$, we can take $A = 2g^2 + g+2$, $\mathcal{L}$ is the set of squarefree integers whose prime factors are all in $\Lambda$, and $\psi(m) = \prod_{\ell \mid m}(\ell +1)$.
\end{proposition}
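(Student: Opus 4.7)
The plan is to deduce Proposition \ref{Prop Frobenius large sieve} as a direct specialization of Theorem \ref{Th Frobenius large sieve new version}, taking $U = \mathscr{H}_n$ together with the compatible system $\{\rho_\ell\}_{\ell\in\Lambda}$ coming from the action of Frobenius on $\textup{H}^1_{\text{\'et}}(\mathcal{C}_f,\mathbb{Z}_\ell)$. First I would verify the geometric hypotheses: $\mathscr{H}_n$ is the complement of the discriminant hypersurface in $\mathbb{A}^n$, hence is smooth, affine, and geometrically connected, and admits a compactification (e.g.\ inside $\mathbb{P}^n$, after possibly resolving) in which it is the complement of a normal-crossings divisor, so Perret-Gentil's refinement used inside Theorem \ref{Th Frobenius large sieve new version} is available. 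The compatibility of the system $\{\rho_\ell\}$ is standard from the smooth and proper base change theorem applied to the universal hyperelliptic family over $\mathscr{H}_n$.

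The second step is to identify the monodromy groups. By Hall's theorem \cite{Hall}, the geometric monodromy satisfies $G_\ell^{\mathrm{geo}} = \Sp_{2g}(\mathbb{F}_\ell)$ for all $\ell \in \Lambda$. Since Frobenius acts on $\textup{H}^1_{\text{\'et}}$ with similitude character given by the cyclotomic character, $\rho_\ell(\frob_{f,q})$ is $q$-symplectic, and therefore the arithmetic monodromy $G_\ell$ lies in $\CSp_{2g}(\mathbb{F}_\ell)$ with the coset $\rho_\ell(\frob_{f,q})G_\ell^{\mathrm{geo}}$ being exactly the set of $q$-symplectic similitudes. The hypothesis on $\Omega_\ell$ (that its elements have multiplicator equal to $q$) is therefore precisely the condition that $\Omega_\ell$ is contained in this coset, so the assumption of Theorem \ref{Th Frobenius large sieve new version} is satisfied. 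Since $G_\ell^{\mathrm{geo}} = \Sp_{2g}(\mathbb{F}_\ell)$, the same theorem tells us we may take $A = 2g^2 + g + 2$.

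Finally I would apply the theorem with the choice $L+1 = \lfloor q^{1/(2A)} \rfloor$, so that the prefactor $\bigl(1 + (L+1)^A C q^{-1/2}\bigr)$ is bounded by an absolute constant times $C$. The constant $C = C(\mathscr{H}_{n,\overline{\mathbb{F}}_q},\{\rho_\ell\})$ depends only on $\mathscr{H}_n$ over $\overline{\mathbb{F}}_q$ and on the compatible system, which in our setting means it depends only on $n$ and on $p$ (in particular not on $q$). This yields exactly the bound
\[
\frac{\#\{f\in\mathcal{H}_n(\F_q)\mid \rho_\ell(\frob_{f,q})\notin\Omega_\ell,\,\forall \ell<q^{1/(2A)},\,\ell\in\Lambda\}}{|\mathcal{H}_n(\F_q)|} \ll_{p,n} H^{-1},
\]
with $H$ as in \eqref{H}, which is the statement of the proposition after replacing $\psi(m) \leq L+1$ by the equivalent condition $\psi(m) \leq q^{1/(2A)}$.

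The main obstacle is conceptual rather than computational: one must correctly verify Hall's identification of the geometric monodromy and, more delicately, confirm that the smooth compactification of $\mathscr{H}_n$ can be chosen with normal-crossings boundary so that the sharper cohomological bound of Perret-Gentil applies (otherwise one would lose the exponent $\frac{1}{2A}$ and fall back on Kowalski's weaker uniform bound, as in the multidimensional Corollary~8.10 of \cite{Kowalski book}). Once these structural inputs are in hand, the proof is a formal application of Theorem \ref{Th Frobenius large sieve new version}.
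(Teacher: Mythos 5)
Your proposal is correct and follows essentially the same route as the paper: apply Theorem~\ref{Th Frobenius large sieve new version} to $U = \mathscr{H}_n$, invoke Hall's theorem to identify the geometric monodromy as $\Sp_{2g}(\F_\ell)$ (hence $A = 2g^2+g+2$), check compatibility of $\{\rho_\ell\}$ from independence of $\ell$ in the Frobenius action on $H^1$, and choose $L+1 = q^{1/(2A)}$. The only step you flag as delicate --- that $\mathscr{H}_n$ is the complement of a normal-crossings divisor in a smooth compactification, needed for Perret-Gentil's refinement --- is in fact the same ingredient the paper uses, citing \cite{EVW}*{Lemma~7.6} directly for it.
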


\begin{proof}
We are in the setting of Theorem~\ref{Th Frobenius large sieve new version} with  $U= \mathscr{H}_n$  of dimension $n \geq 2$.
The variety $\mathscr{H}_n \subset \mathbb{A}^n$ is defined by the non-vanishing of the discriminant, it is thus a smooth affine geometrically connected algebraic variety which is the complement of a divisor with normal crossing (\cite{EVW}*{Lemma~7.6}).

As in \cite{Kowalski book}*{Section~8.6} for each $\ell \neq 2,p$, the sheaf
$\mathcal{F}_\ell$ corresponding to $\rho_\ell$ is a rank $2g$ lisse sheaf of $\F_\ell$-modules on $\mathscr{H}_n$.
Since the action of the Frobenius on $H^1(C,\mathbb{Z}_\ell)$ is independent of $\ell$, the representations $\rho_\ell$   arise from a compatible system.
By \cite{Hall}*{Theorem 1.2} (attributed to Yu), 
  the images of $\pi_1(\mathscr{H}_n,\bar\eta)$ and of $\pi_1(\overline{\mathscr{H}}_n,\bar\eta)$ (arithmetic and geometric monodromy groups) are conjugate to $\Sp_{2g}(\F_\ell)$ for all $\ell \neq 2,p$.

Hence, the bound follows from Theorem~\ref{Th Frobenius large sieve new version}, where we chose $L +1 = q^{\frac1{2A}}$.
\end{proof}

\begin{remark} \label{rk : change Lambda}
Note that for any finite set of primes $S$, the result of Proposition \ref{Prop Frobenius large sieve} holds with the set $\Lambda$ replaced by $\Lambda' = \Lambda \setminus S$, and the set $\mathcal{L}$ replaced by the set $\mathcal{L}'$ of squarefree integers with prime factors in $\Lambda'$. This is used in the proof of Lemma \ref{Lemma bound wrong direction condition}.
\end{remark}

\section{Linear dependence}
\label{sec : LI}

Kowalski's Theorem \ref{thm:Kowalski} is concerned with one-parameter families of reducible squarefree polynomials. The large sieve result Proposition \ref{Prop Frobenius large sieve} above allows us, following Kowalski's proof in \cite{Kowalskibook}, to get the exact same bound, but for the larger space of parameters $\mathcal{H}_n(\F_q)$.

\begin{proof}[Proof of Theorem~\ref{thm:main}.\ref{item LI main}.]
We follow exactly the proof of \cite{Kowalski book}*{Theorem 8.15} but instead of using \cite{Kowalski book}*{Corollary 8.10}, we use Proposition \ref{Prop Frobenius large sieve}. 
Thus, we obtain 
\begin{equation*}
\frac{1}{|\mathcal{H}_n(\F_q)|}    \#\{f \in \mathcal{H}_n(\F_q) \mid \text{ The zeta function of } \mathcal{C}_f \text{ does not satisfy } \LI\} \ll_{p, g} H_1^{-1} + H_2^{-1} + H_3^{-1} + H_4^{-1},
\end{equation*}
where for $i = 1, \dots 4$, $$H_i =  \sum_{\substack{\psi(m)\leq q^{\frac1{2A}} \\ m \in \mathcal{L}}} \prod_{\ell \mid m}\frac{\lvert \Omega_{i,\ell} \rvert}{\lvert \Sp_{2g}(\F_\ell)\rvert - \lvert \Omega_{i,\ell}\rvert},$$
and the sets $\Omega_{i,\ell}$ are defined as in \cite{Kowalski book}*{pages 179--180}.
In particular, 
\begin{enumerate}
    \item $\Omega_{1,\ell}$ is the set of matrices  $M \in \CSp_{2g}(\mathbb{F}_{\ell})$ with multiplicator $q$ such that $\chi_M (X)$ is irreducible, and  \cite{Kowalski book}*{page 181} gives $\frac{\lvert \Omega_{1,\ell}\rvert}{\lvert \Sp(\F_\ell) \rvert} \geq \frac1{2g}$.
    \item $\Omega_{2,\ell}$ is the set of matrices  $M \in \CSp_{2g}(\mathbb{F}_{\ell})$ with multiplicator $q$ such that $\chi_M (X)$ factors as a product of an irreducible quadratic polynomial and a product of irreducible polynomials of odd degree, which satisfy\footnote{a factor $\frac12$ was forgotten in \cite{Kowalski book}*{page 181}.} $\frac{\lvert \Omega_{2,\ell}\rvert}{\lvert \Sp(\F_\ell) \rvert} \geq \frac1{8g}$ by Lemma~\ref{lemma counting polynomials} (with $k=1$, $n_0=1$, $n_{\frac{g-3}{2}}=1$ in the case $g$ is odd) and \cite{Kowalski book}*{Lemma~B.5}.
    \item $\Omega_{3,\ell}$ is the set of matrices  $M \in \CSp_{2g}(\mathbb{F}_{\ell})$ with multiplicator $q$ such that 
    the polynomial $h$ defined by  $\chi_M (X) = X^g h(X + q X^{-1})$ factors as a product of  an irreducible quadratic polynomial and a product of irreducible polynomials of odd degree, and  \cite{Kowalski book}*{page 181} gives $\frac{\lvert \Omega_{3,\ell}\rvert}{\lvert \Sp(\F_\ell) \rvert} \underset{g \to +\infty}{\sim} \frac{\log 2}{\log g}$.
    \item $\Omega_{4,\ell}$ is the set of matrices  $M \in \CSp_{2g}(\mathbb{F}_{\ell})$ with multiplicator $q$ such that 
    the polynomial $h$ defined by  $\chi_M (X) = X^g h(X + q X^{-1})$ has an irreducible factor of prime degree $> \tfrac{g}2$,
    and  \cite{Kowalski book}*{page 181} gives $\frac{\lvert \Omega_{4,\ell}\rvert}{\lvert \Sp(\F_\ell) \rvert} \underset{g \to +\infty}{\sim} \frac1{\sqrt{2\pi g}}$.
\end{enumerate}
The final bound is the same (correcting $\delta_2 \geq (4g)^{-1}$ into $\delta_2 \geq (8g)^{-1}$), but the space of parameters $\mathcal{H}_n(\F_q)$ is larger. The dependency on $p$ is lost in the proof of Theorem \ref{Th Frobenius large sieve new version}.\end{proof}

To prove Theorem \ref{main:small genus} for the genus $2$ case, we will use the following result of Ahmadi and Shparlinski.

\begin{theorem}[\cite{AhmadiShpar}*{Theorem 2}] \label{thm:Ahmadi Shparlinski}
Let $\mathcal{C}$ be a smooth projective curve of genus $2$. If the Jacobian of $\mathcal{C}$ is absolutely simple, then the zeta function of $\mathcal{C}$ satisfies $\LI$.
\end{theorem}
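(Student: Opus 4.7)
My plan is to prove the contrapositive: assuming $\LI$ fails, I will derive that $J(\mathcal{C})$ is not absolutely simple. Writing the inverse zeros of the zeta function as $\alpha_1, \overline{\alpha_1}, \alpha_2, \overline{\alpha_2}$ with $\alpha_j = \sqrt{q}\, e^{i\theta_j}$ and $\theta_j \in (0, \pi)$, the failure of $\LI$ produces integers $(a, b, c) \neq (0, 0, 0)$ with $a\theta_1 + b\theta_2 + c\pi = 0$. Exponentiating yields the multiplicative relation
\[
\alpha_1^a \alpha_2^b = (-1)^c\, q^{(a+b)/2}, \qquad (a,b) \in \mathbb{Z}^2 \setminus \{(0,0)\}.
\]

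The key step uses the Galois action on the Weil numbers to eliminate $\alpha_2$. The reciprocal characteristic polynomial $P^\ast(T) := \prod_i(T - \alpha_i)$ is symplectic (its roots pair up as $\alpha, q/\alpha$), so the Galois group $G$ of its splitting field embeds in the Weyl group of $\mathrm{Sp}_4$, namely the dihedral group $D_4$ of order $8$. Under the absolute simplicity hypothesis, the Galois-biquadratic case $G = V_4$ is excluded, since it would exhibit a proper imaginary quadratic CM subfield and hence split $J(\mathcal{C})$ up to isogeny over $\overline{\mathbb{F}_q}$. This leaves only $G \in \{D_4, C_4\}$, and in both cases $G$ contains a four-cycle $r$ with $r(\alpha_1) = \alpha_2$ and $r(\alpha_2) = \overline{\alpha_1} = q/\alpha_1$. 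Applying $r$ to the displayed relation and using $\overline{\alpha_1} = q/\alpha_1$ produces the auxiliary relation $\alpha_1^{-b} \alpha_2^{a} = \pm q^{(a-b)/2}$. Raising the original relation to the $a$-th power and the auxiliary one to the $b$-th power, then dividing, eliminates $\alpha_2$ and gives
\[
\alpha_1^{a^2 + b^2} = \pm q^{(a^2 + b^2)/2}.
\]
Since $a^2 + b^2 > 0$, the quotient $\alpha_1 / \sqrt{q}$ is a root of unity.

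To conclude, I would observe that if $\alpha_1 / \sqrt{q}$ is a root of unity then, by Galois invariance, so is every $\alpha_i / \sqrt{q}$. Consequently all Frobenius eigenvalues have $p$-adic valuation equal to $\tfrac{1}{2} v_p(q)$, which is precisely the statement that $J(\mathcal{C})$ is supersingular. But a supersingular abelian surface over $\overline{\mathbb{F}_q}$ is isogenous to $E^2$ for a supersingular elliptic curve $E$, so $J(\mathcal{C})$ fails to be absolutely simple, contradicting the hypothesis.

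The main obstacle is the case analysis needed to exclude $G = V_4$ under absolute simplicity and to verify the existence of the rotation $r$ in the remaining cases, together with the degenerate situations $b = 0$ (resp. $a = 0$), where the original multiplicative relation directly forces $\alpha_1/\sqrt{q}$ (resp. $\alpha_2/\sqrt{q}$) to be a root of unity without needing $r$ at all, and the supersingularity argument still concludes. A conceptually cleaner alternative route is to work directly with $\mathrm{End}^0(J(\mathcal{C}) \otimes \overline{\mathbb{F}_q})$: by Tate's theorem, any multiplicative relation among the $\alpha_i$ lifts to a non-trivial idempotent in this algebra, which again translates into a splitting of $J(\mathcal{C})$ up to isogeny over the algebraic closure and hence contradicts absolute simplicity.
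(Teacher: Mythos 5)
The paper cites this as \cite{AhmadiShpar}*{Theorem~2} and does not supply a proof, so there is no in-paper argument to compare against; what follows evaluates your sketch on its own terms.

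The computational core of your argument is sound: from the failure of $\LI$ you correctly extract a relation $(\alpha_1/\sqrt q)^a(\alpha_2/\sqrt q)^b = \pm 1$, apply a four-cycle $r$ with $r(\alpha_1)=\alpha_2$, $r(\alpha_2)=q/\alpha_1$ (such an $r$, or its inverse, does exist whenever $G\in\{C_4, D_4\}$ since $r^2$ must be the central element $\alpha\mapsto q/\alpha$), and eliminate $\alpha_2$ to conclude $\alpha_1/\sqrt q$ is a root of unity. The deduction that this forces all slopes of the Newton polygon to equal $\tfrac12$, hence supersingularity, hence $J\sim E^2$ over $\overline{\F}_q$, is correct and is a clean way to finish.

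The genuine gap is the exclusion of $G=V_4$. The stated justification---``it would exhibit a proper imaginary quadratic CM subfield and hence split''---is not a proof: the imaginary quadratic subfield lies in the \emph{center} of $\mathrm{End}^0(J)$, and a central subfield does not produce an idempotent or a splitting. The correct route through CM theory requires the nontrivial facts that every CM type on a biquadratic CM field is induced from an imaginary quadratic subfield, and that this implies non-simplicity for the abelian surface over $\overline{\F}_q$ via Honda's CM lifting; none of this is spelled out. Likewise the closing ``cleaner alternative route'' claiming that ``any multiplicative relation lifts to a non-trivial idempotent'' via Tate's theorem is not a theorem I recognize and as stated does not hold: Tate's theorem controls $\mathrm{End}\otimes\Q_\ell$, and $\ell$-adic eigenspace decompositions do not descend to global idempotents. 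A more economical fix is to note that the transitive $V_4\subset W(C_2)$ contains the transposition $\tau\colon\alpha_1\leftrightarrow\alpha_2$; applying $\tau$ to your relation and multiplying/dividing produces $(\alpha_1\alpha_2/q)^{a+b}=\pm1$ and $(\alpha_1/\alpha_2)^{a-b}=\pm1$, and since $(a,b)\neq(0,0)$ at least one exponent is nonzero. Then either some $\alpha_i/\alpha_j$ ($i\neq j$, allowing $\alpha_j=\overline{\alpha_i}$) is a root of unity---forcing the Weil polynomial over $\F_{q^n}$ to acquire a repeated irreducible factor, whence $J_{\F_{q^n}}$ is not simple by Honda--Tate---and this subsumes the $V_4$ case without invoking CM type primitivity. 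You would still need to verify the small Honda--Tate dimension count in the repeated-root step, but this is much more elementary than the CM-type argument you gestured at.
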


\begin{proof}[Proof of Theorem~\ref{main:small genus}.]
Let us first prove the bound when $g=1$ and assume for now that $\deg f = 3$. Then $\mathcal{C}_f$ is an elliptic curve, with two conjugate (possibly equal) Frobenius eigenvalues. The only way for $\LI$ to fail is that those eigenvalues are of the form $\sqrt q \zeta$ with $\zeta$ a root of unity, that is, $\mathcal{C}_f$ has to be a supersingular elliptic curve. By \cite{Silverman}*{V~Theorem 4.1.(c)}, there are $\ll p$ such curves over $\F_q$, up to $\overline{\F}_q$-isomorphism (recall that $q$ is a power of the prime number $p$). But two elliptic curves are isomorphic over $\overline{\F}_q$ if and only if they have the same $j$-invariant (\cite{Silverman}*{III~Proposition 1.4.(b)} which holds in every characteristic). Let $E$ be a fixed supersingular elliptic curve defined over $\F_q$ with $j$-invariant $j$, and let us write $j(f)$ the $j$-invariant of the elliptic curve $\mathcal{C}_f$. Then clearly $j(f)=j$ is a non-zero polynomial equation in the $\deg f$ coefficients of $f$ by the definition of the $j$-invariant \cite{Silverman}*{page~42}. It is indeed non-zero since there always exist a non-supersingular elliptic curve over $\F_q$ (\cite{Waterhouse}*{Theorem 4.1}). In particular, one has $$\#\{(a, b, c) \in \F_q^3 \mid f=x^3+ax^2+bx+c, \mathcal{C}_f \text{ is isomorphic to } E \text{ over } \overline{\F}_q\} \ll q^2.$$ This yields $$\#\{f \in \mathcal{H}_3(\F_q) \mid \mathcal{C}_f \text{ is supersingular }\} \ll pq^2$$ and the result follows since in general $|\mathcal H_n(\F_q)| = q^n - q^{n-1}$. In the case where $\deg f = 4$ we assume that $p \neq 2, 3$. Then $\mathcal{C}_f$ is isomorphic to its Jacobian $J_f$, and by \cite{Cremona}*{page~82}, $J_f$ is given as the smooth projective model of the curve defined by the equation $y^2 = x^3 - 27Ix - 27J$, and $I$ and $J$ are the quartic invariants defined in \cite{Cremona}*{pages~72--73}. The $j$-invariant of $J_f$ is then clearly a non-constant rational function in the coefficients of~$f$, and we conclude as in the case $\deg f = 3$.

Assume now that $g=2$. By Theorem \ref{thm:Ahmadi Shparlinski}, if $\LI$ fails for the zeta function of $\mathcal{C}_f$, then its Jacobian $J_f$ is not absolutely simple, \textit{i.e.} it splits over a finite extension $\mathbb K$ of $\F_q$. In particular, the Weil polynomial $W_{f, \mathbb K}$ of $J_f/\mathbb K$ is reducible. Calling $d$ the degree $[\mathbb K : \F_q]$, one has $W_{f, \mathbb K}(X^d) = \prod_{k=0}^{d-1} W_f(\zeta_d ^k X) = \prod_{k=0}^{d-1} P_f(\zeta_d ^k X)$, where $W_f$ is the Weil polynomial of $J_f/\F_q$, which is equal to $P_f$ (\cite{CorSil}*{VII. Corollary 11.4}), and $\zeta_d$ is a primitive $d$-th root of unity. It easily implies that $W_{f, \mathbb K}$ has roots $\alpha_j(\chi_f)^d$, $j \in \{1, \dots, 4\}$. Now, there are two possible cases. Either one of $\alpha_i(\chi_f)^d$ is a rational number (necessarily $\pm q^{d/2}$), or there are two indices $i \neq j\in \{1, \dots, 4\}$ such that $\alpha_i(\chi_f)^d \alpha_j(\chi_f)^d$ is a rational number (necessarily $\pm q^d$). In particular, $\chi_f$ is degenerate according to Definition \ref{nondegenerate}. We conclude by Lemma~\ref{Lemma bound quotient}.
\end{proof}

\section{ Complete biases}
\label{sec :complete}

\subsection{Upper bounds for  complete biases}\label{subsec:completeconditions}

To derive a necessary condition for exhibiting a complete bias, we will use the following simple inequality of Bhatia and Davis \cite{BhatiaDavis}*{Theorem 1} (the proof in \cite{BhatiaDavis} is done for discrete random variables, but the general case works exactly the same).

\begin{theorem}[Bhatia-Davis Inequality]\label{BD}
Let $X$ be a bounded random variable such that $a \leq X \leq b$ almost-surely with mean $\mu$ and variance $\sigma^2$, then 
\begin{equation}\label{eq:B-D}
    \sigma^2 \leq (b-\mu)(\mu-a).
\end{equation}
\end{theorem}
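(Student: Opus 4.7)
The proof I have in mind is the standard short argument attributed to Bhatia and Davis. The plan is to exploit the key observation that $(b-X)(X-a) \geq 0$ almost surely, which follows immediately from the hypothesis $a \leq X \leq b$ almost surely. Expanding the product gives the pointwise (almost-sure) inequality
\[
X^2 \leq (a+b)X - ab.
\]
The main idea of the proof is that this pointwise domination, once integrated, will automatically produce the desired bound on the variance.

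Next, I would take expectations on both sides. Since expectation preserves almost-sure inequalities, we obtain
\[
\mathbb{E}[X^2] \leq (a+b)\mu - ab.
\]
Subtracting $\mu^2$ from both sides and recognizing the left-hand side as $\sigma^2 = \mathbb{E}[X^2] - \mu^2$, this becomes
\[
\sigma^2 \leq (a+b)\mu - ab - \mu^2.
\]

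Finally, I would verify by a direct algebraic rearrangement that $(a+b)\mu - ab - \mu^2 = (b-\mu)(\mu-a)$, which yields exactly \eqref{eq:B-D}. Since the argument reduces to a one-line algebraic identity and a trivial use of monotonicity of expectation, I do not anticipate any genuine obstacle. The only point worth flagging is that the hypothesis $a \leq X \leq b$ is only needed almost surely, and the argument respects this since both the initial nonnegativity and the subsequent expectation step are unaffected by modification on a null set.
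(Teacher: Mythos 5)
Your proof is correct and complete. The paper does not actually present a proof of this inequality; it simply cites \cite{BhatiaDavis} with the remark that the argument there (written for discrete random variables) carries over to the general case. The argument you give --- the almost-sure pointwise inequality $(b-X)(X-a)\geq 0$, its expansion to $X^2 \leq (a+b)X - ab$, taking expectations, and the algebraic identity $(a+b)\mu - ab - \mu^2 = (b-\mu)(\mu-a)$ --- is the standard short proof and applies directly to any bounded real random variable, so it is arguably cleaner than adapting the discrete argument from the original reference. Every step is sound: boundedness of $X$ guarantees a finite second moment so that $\sigma^2 = \mathbb{E}[X^2]-\mu^2$ is well defined, expectation preserves almost-sure inequalities, and the final identity is a one-line expansion.
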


\begin{proposition}[Necessary condition for complete bias]\label{lemma:necessary-complete}
Let $f\in \F_q[x]$ and assume that $\Pi(n; \chi_f)$ admits a complete bias.
Then one of the following assertions is true.
\begin{enumerate}
    \item The distribution $\mu_{\Delta_f}$ is symmetric with respect to its mean value and $m_0(\chi_f) \geq m_{\pi}(\chi_f) + 2\sum_{j=1}^r m_j(\chi_f)$ and in the case $r=0$, the inequality is strict with more than half of the zeros equal to $\sqrt{q}$.
    \item The distribution $\mu_{\Delta_f}$ is not symmetric with respect to its mean value and $m_0(\chi_f) > m_{\pi}(\chi_f)$.
\end{enumerate}

\end{proposition}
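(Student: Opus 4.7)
The plan is to rewrite $\Delta_f(n)$ as the evaluation of a continuous function on a torus and convert the complete bias condition into pointwise non-negativity of that function on an orbit closure. Pairing conjugate zeros, I would write
\[
\Delta_f(n) = F(n\pi, n\theta_1, \ldots, n\theta_r), \quad F(a_0, \ldots, a_r) := \bigl(m_0 + \tfrac12\bigr) + \bigl(m_\pi + \tfrac12\bigr)\cos a_0 + 2\sum_{j=1}^r m_{\theta_j}\cos a_j,
\]
and set $S := \sum_{j=1}^r m_{\theta_j}$ for shorthand. By the Kronecker--Weyl argument used in the proof of Lemma~\ref{lem : symmetry}, the sequence $(n\pi, n\theta_1, \ldots, n\theta_r) \bmod 2\pi$ equidistributes on the closed subgroup $A(\Delta_f) \subseteq \mathbb{T}^{r+1}$, so the complete bias hypothesis, being a density--one condition on $\{F>0\}$, forces $F \geq 0$ Haar--almost everywhere on $A(\Delta_f)$; continuity and compactness then upgrade this to $F \geq 0$ on all of $A(\Delta_f)$. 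I will then split on the symmetry of $\mu_{\Delta_f}$.

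For Case~(1), when $\mu_{\Delta_f}$ is symmetric, Lemma~\ref{lem : symmetry} places $(\pi, \pi, \ldots, \pi) \in A(\Delta_f)$. Evaluating $F$ at this global minimum immediately gives $m_0 - m_\pi - 2S \geq 0$, which is the stated inequality. When $r = 0$, the group $A(\Delta_f)$ reduces to the two--point set $\{0, \pi\}$, each point carrying density $\tfrac12$ (corresponding to even and odd~$n$), so the value $F(\pi) = m_0 - m_\pi$ is realized on a set of density $\tfrac12$; complete bias rules out $F(\pi) = 0$ and produces the strict inequality $m_0 > m_\pi$, i.e.\ more than half of the zeros equal to $\sqrt q$.

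For Case~(2), when $\mu_{\Delta_f}$ is not symmetric, the plan is to restrict attention to the odd subsequence $(\Delta_f(2m+1))_{m \geq 0}$. The key input is that, for each $\theta_j \notin \{0, \pi\}$, one has $2\theta_j \not\equiv 0 \pmod{2\pi}$, so a short computation using $\cos((2m+1)\theta_j) = \cos(2m\theta_j)\cos\theta_j - \sin(2m\theta_j)\sin\theta_j$ yields $\lim_M M^{-1}\sum_{m < M} \cos((2m+1)\theta_j) = 0$. Kronecker--Weyl applied to $((2m+1)\theta_j)_j$ then delivers a limiting distribution $\nu_{\mathrm{odd}}$ for $\Delta_f(2m+1)$ with mean $m_0 - m_\pi$ and support contained in $[m_0 - m_\pi - 2S, m_0 - m_\pi + 2S]$. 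Complete bias forces $\mathrm{supp}(\nu_{\mathrm{odd}}) \subseteq [0, +\infty)$, so Theorem~\ref{BD} applied with $a = 0$, $\mu = m_0 - m_\pi$, and $b = b_{\mathrm{odd}}$ (the essential supremum of $\nu_{\mathrm{odd}}$) gives $\sigma_{\mathrm{odd}}^2 \leq (b_{\mathrm{odd}} - (m_0 - m_\pi))(m_0 - m_\pi)$. Non--negativity of the left hand side and of $b_{\mathrm{odd}} - (m_0 - m_\pi)$ forces $m_0 \geq m_\pi$, and the equality case $m_0 = m_\pi$ would collapse $\nu_{\mathrm{odd}}$ to a point mass at $0$, contradicting $\nu_{\mathrm{odd}}((0, +\infty)) = 1$. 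Hence $m_0 > m_\pi$ strictly.

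The main obstacle I expect will be the careful justification, in Case~(2), that the odd subsequence $((2m+1)\theta_1, \ldots, (2m+1)\theta_r)$ itself equidistributes on a coset of a subtorus of $\mathbb{T}^r$ and that the resulting limiting distribution $\nu_{\mathrm{odd}}$ really has the mean and support I claim; this amounts to identifying the correct orbit closure and verifying the vanishing of Haar integrals of the $\cos a_j$. Once that equidistribution is in place, the Bhatia--Davis application and the strictness argument are brief one--line steps.
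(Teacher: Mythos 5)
Your overall strategy matches the paper's closely: in the symmetric case you exploit the point $(\pi,\ldots,\pi)\in A(\Delta_f)$, and in the asymmetric case you restrict to the odd subsequence and invoke the Bhatia--Davis inequality. Case~(1) and the $r=0$ discussion are fine. The place that needs attention is the final strictness step in Case~(2).

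You rule out $m_0=m_\pi$ by saying that the collapse of $\nu_{\mathrm{odd}}$ to $\delta_0$ would ``contradict $\nu_{\mathrm{odd}}((0,+\infty))=1$.'' But complete bias only gives $\nu_{\mathrm{odd}}\big((-\infty,0)\big)=0$, i.e.\ $\operatorname{supp}\nu_{\mathrm{odd}}\subseteq[0,\infty)$. It does \emph{not} give $\nu_{\mathrm{odd}}(\{0\})=0$: the indicator of $(0,\infty)$ is not continuous at $0$, so the density statement ``$\Delta_f(2m+1)>0$ for almost all $m$'' cannot be transferred across the limiting distribution into $\nu_{\mathrm{odd}}((0,\infty))=1$; mass can leak to the boundary point $0$. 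In fact that is exactly what the degenerate case $\nu_{\mathrm{odd}}=\delta_0$ would look like, so your appeal to $\nu_{\mathrm{odd}}((0,\infty))=1$ is circular. The correct finish is different: since $\nu_{\mathrm{odd}}$ is the push-forward of the (fully supported) Haar measure on the orbit closure $A_{\mathrm{odd}}$ under the \emph{continuous} map $F_{\mathrm{odd}}(a) = m_0 - m_\pi + 2\sum_j m_{\theta_j}\cos a_j$, the equality $\nu_{\mathrm{odd}}=\delta_0$ forces $F_{\mathrm{odd}}\equiv 0$ on $A_{\mathrm{odd}}$, hence $\Delta_f(2m+1)=0$ for all $m$, which makes $\operatorname{dens}(\Delta_f>0)\le\tfrac12<1$ and contradicts complete bias directly. (This is also, in disguise, what the paper does: it separates the case ``all $m_j'=0$'' and observes that then $\Delta_f(2n+1)\equiv m_0-m_\pi$ must be strictly positive.)

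A secondary remark: you flag that the variance computation is the main expected obstacle and that it reduces to ``verifying the vanishing of Haar integrals of the $\cos a_j$.'' Be warned that the cross-terms do \emph{not} all vanish: if $\theta_k=\pi-\theta_j$ then $\cos((2m+1)\theta_k)=-\cos((2m+1)\theta_j)$ identically, so those two coordinates are perfectly anti-correlated on the odd subsequence, and the variance is $2\sum_j (m_j')^2$ with $m_j'=m_{\theta_j}-m_{\pi-\theta_j}$ as in the paper, not $2\sum_j m_{\theta_j}^2$. Your write-up happens not to need the exact value of $\sigma^2_{\mathrm{odd}}$ (you only use $\sigma^2_{\mathrm{odd}}\ge 0$ and the support bound), so with the corrected strictness argument your route becomes slightly leaner than the paper's; but the stated plan of ``verifying the vanishing of the Haar integrals'' would not go through as described.
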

In particular, this implies the following condition.
\begin{corollary}\label{cor:necessary cond complete}
    If $\Pi(n; \chi_f)$ admits a complete bias for $f \in \mathbb{F}_q[x]$,
    then  $q$ is a square and $L(\frac{1}{2},\chi_f)=0$.
\end{corollary}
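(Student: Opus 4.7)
The plan is to apply Proposition \ref{lemma:necessary-complete} and extract both conclusions from the dichotomy it provides. I will first establish that $m_0(\chi_f) \geq 1$ in every sub-case, then translate this into the vanishing of $L(\tfrac12, \chi_f)$, and finally use the integrality of the polynomial $P_f$ to force $q$ to be a square.

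To establish $m_0(\chi_f) \geq 1$: in case (2) of Proposition \ref{lemma:necessary-complete}, one has $m_0(\chi_f) > m_\pi(\chi_f) \geq 0$ directly. In case (1) with $r \geq 1$, the inequality $m_0(\chi_f) \geq m_\pi(\chi_f) + 2\sum_{j=1}^r m_{\theta_j}(\chi_f) \geq 2$ holds because at least one $m_{\theta_j}$ is positive. In case (1) with $r = 0$, the requirement that strictly more than half of the $2g$ inverse zeros equal $\sqrt q$ gives $m_0(\chi_f) > g \geq 1$. In every case, $m_0(\chi_f) \geq 1$ means $u = q^{-1/2}$ is a zero of $\mathcal L(u, \chi_f)$; since the extra factor $(1-u)$ that appears in $\mathcal L(u, \chi_f)$ when $n$ is even does not vanish at $u = q^{-1/2}$, this is equivalent to $L(\tfrac12, \chi_f) = \mathcal L(q^{-1/2}, \chi_f) = 0$.

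For the squareness of $q$, the key observation is that $P_f(T) \in \Z[T]$ is monic, so Galois conjugates of any of its roots occur with equal multiplicity. If $q$ were not a square, the minimal polynomial of $\sqrt q$ over $\Q$ would be $T^2 - q$, and consequently $-\sqrt q$ would appear as a root of $P_f$ with the same multiplicity as $\sqrt q$, i.e., $m_\pi(\chi_f) = m_0(\chi_f)$. This would contradict every sub-case of the first step: the strict inequality $m_0 > m_\pi$ fails in case (2) and in case (1) with $r = 0$, while in case (1) with $r \geq 1$ one would obtain $\sum_{j=1}^r m_{\theta_j}(\chi_f) \leq 0$, contradicting $r \geq 1$. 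The only mild subtlety in the argument is keeping careful track of which sub-case the strict inequality lives in and of the $(1-u)$ factor for even $n$; otherwise, the proof is entirely elementary once Proposition \ref{lemma:necessary-complete} is in hand.
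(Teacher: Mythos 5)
Your proof is correct and follows essentially the same route the paper intends: read off from Proposition \ref{lemma:necessary-complete} that $m_0(\chi_f) > m_\pi(\chi_f)$ in every sub-case (which you verify carefully), deduce $m_0 \geq 1$ hence $\mathcal L(q^{-1/2},\chi_f)=0$ hence $L(\tfrac12,\chi_f)=0$, and observe that $m_0 \neq m_\pi$ forces $\sqrt q$ to be Galois-stable among the roots of $P_f \in \Z[T]$, so $\sqrt q \in \Q$ and $q$ is a square. Your extra care in noting that the auxiliary factor $(1-u)$ for even $n$ does not vanish at $u=q^{-1/2}$, and in splitting the $r=0$ and $r\geq 1$ sub-cases explicitly, is exactly the bookkeeping the paper's one-line "In particular" leaves to the reader.
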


\begin{remark}
In the case of Dirichlet $L$-functions over $\Q$, it is a famous conjecture of Chowla\cite{Chowla} that no such $L$-function can vanish at $\tfrac12$. It is known that Artin $L$-functions corresponding to number fields extensions can vanish at $\tfrac12$. Incidentally, this was used in \cite{Bailleul1} to provide examples of reversed bias in this context. In the function field case, it was shown in \cite{Li18}*{Theorem 1.3} that for any $q$ there are infinitely many Dirichlet $L$-functions over $\F_q(x)$ vanishing at $\tfrac12$, that is such that the corresponding Weil polynomial vanishes at $\sqrt q$. However it is expected that $100\%$ of those $L$-functions do not vanish at~$\tfrac12$ for a fixed $q$ (\cite{Li18}*{Remark 1.4}). If this were true, we would obtain the following result instead of Theorem \ref{thm:complete arithm geom}: for every $q$ a power of an odd prime, $$\lim_{n \to +\infty} \frac{\#\{f \in \mathcal{H}_n(\F_q) \mid \Pi(n; \chi_f) \text{ exhibits a complete bias}\}}{|\mathcal{H}_n(\F_q)|} = 0.$$
Note also that by \cite{ELS}*{Corollary~1.6} there is no complete bias when $f$ is irreducible and $4$ does not divide the degree of $f$. Indeed, in this case $L(\frac 1 2, \chi_f) \neq 0$.
\end{remark}

We can now prove our main results concerning upper bounds for complete bias using the necessary condition in Corollary~\ref{cor:necessary cond complete}.
\begin{proof}[Proof of Theorem~\ref{thm:main}.\ref{item complete bias th main}.]
 The proof follows from applying Corollary~\ref{cor:necessary cond complete}
and Lemma~\ref{Lemma vanish 0}.   
\end{proof}

\begin{proof}[Proof of Theorem~\ref{thm:complete arithm geom}]

By \cite{ELS}*{Theorem 3.2}, one has $$\sup_n \frac{\#\{f \in \mathcal{H}_n(\F_q) \mid \mathcal{L}(\sqrt q, \chi_f) = 0\}}{|\mathcal{H}_n(\F_q)|} \ll q^{-\frac{1}{276}},$$ and so the bound follows from Corollary~\ref{cor:necessary cond complete}.
\end{proof}

We finally give the proof of our necessary condition for complete bias.
\begin{proof}[Proof of Proposition~\ref{lemma:necessary-complete}]
Suppose that the distribution $\mu_{\Delta_f}$ is symmetric with respect to its mean value $m_0(\chi_f) + \frac12$.
We have $\Delta_f(0) = m_0(\chi_f) + m_{\pi}(\chi_f) + 1 + 2\sum_{j=1}^r m_j(\chi_f)$, so this value is in $\mathrm{supp}\mu_{\Delta_f}$. Indeed, let $\varepsilon > 0$ and $h : \bR \to \bR$ be non-negative continuous and supported on $[\Delta_f(0) - \varepsilon, \Delta_f(0) + \varepsilon]$, with $h(\Delta_f(0))  > 0$. 
Then $$\int_{\bR}h(t)\diff\mu_{\Delta_f}(t) = \int_{A(\Delta_f)}\tilde{h}(a_0, \dots, a_r)\diff\omega_{A(\Delta_f)}(a)$$ where $\tilde{h}(a_0, \dots, a_r) = h\Big(m_0(\chi_f) + \tfrac12+ (m_{\pi}(\chi_f) + \tfrac12)e^{ia_0} + 2 \sum_{j=1}^{r}m_{\theta_j}(\chi_f)\cos(a_j)\Big)$ and $\diff\omega_{A(\Delta_f)}$ is the Haar measure on the subtorus $A(\Delta_f)$ of $\mathbb{T}^{r+1}$ generated by $(\pi, \theta_1, \dots, \theta_r)$. Since $h(\Delta_f(0)) = \tilde{h}(0, \dots, 0) > 0$, we get $\int_{\bR}h(t)\diff\mu_{\Delta_f}(t) > 0$, which implies $\Delta_f(0) \in \mathrm{supp}\mu_{\Delta_f}$.

By symmetry, $2(m_0(\chi_f) + \frac12) -(m_0(\chi_f) + m_{\pi}(\chi_f) + 1 + 2\sum_{j=1}^r m_j(\chi_f)) $ is also in $\mathrm{supp}\mu_{\Delta_f}$, so it is non-negative.

In the case $\mu_{\Delta_f}$ is not symmetric with respect to its mean value, we are interested in the behavior of $$\Delta_f : n \mapsto m_0(\chi_f) + \frac{1}{2} + (-1)^n \left(m_{\pi}(\chi_f) + \frac{1}{2}\right) + 2\sum_{j=1}^r m_j(\chi_f) \cos(n\theta_j).$$

By \cite{Bailleul2}*{Theorem~3.1}, we have $\text{dens}(\Delta_f > 0) \leq \frac12 \mathbb P(Y_0 \geq 0)  + \frac12 \mathbb P(Y_1 \geq 0)$ where $Y_0, Y_1$ are random variables whose distributions are the limiting distributions of $\Delta_f(2 \cdot)$ and $\Delta_f(2 \cdot + 1)$ respectively. Since we are assuming complete bias, then $\text{dens}(\Delta_f > 0) = 1$ yields $\mathbb P(Y_0 \geq 0) =  \mathbb P(Y_1 \geq 0)= 1$.

 We apply the Bhatia-Davis Inequality, Theorem \ref{BD}, to the random variable $Y_1$. To do so, we need the maximum, minimum, mean, and variance of $Y_1$. To understand these, we group the $\theta_j$ by pairs such that $\theta_{j'} = \pi - \theta_j$ when necessary. We have 
 \begin{align*}
     \Delta_f(2n+1) &= m_0(\chi_f) - m_{\pi}(\chi_f) + 2\sum_{j=1}^r m_j(\chi_f) \cos(\theta_j (2n+1)) \\
     &= m_0(\chi_f) - m_{\pi}(\chi_f)  + 2\sum_{j=1}^{r'} m_j'(\chi_f) \cos(\theta_j (2n+1)),
 \end{align*}
where we sum on $\lbrace\theta_1,\dots,\theta_{r'}\rbrace = \lbrace\theta_1,\dots,\theta_{r}\rbrace  \setminus\lbrace \theta_j \mid \exists k\leq j, \theta_j = \pi - \theta_k \rbrace$ (in particular $\frac{\pi}{2} \notin \lbrace\theta_1,\dots,\theta_{r'}\rbrace $),
and we define $m_j'(\chi_f) = m_j(\chi_f) - m_{k(j)}(\chi_f)$ where $\theta_{k(j)} = \pi - \theta_j$ (and $m_{k(j)}(\chi_f)=0$ if such a $\theta_{k(j)}$ does not exist). This grouping of terms was made to simplify the computation of the variance below.
From this expression we deduce 
\begin{align*}
    \mathbb{E}(Y_1) = m_0(\chi_f) - m_{\pi}(\chi_f).
\end{align*}
By the assumption of complete bias, we have $Y_1 \geq 0$ almost-surely. 
By the definition of $Y_1$, we have
\begin{align*}
    Y_1  \leq m_0(\chi_f) - m_{\pi}(\chi_f)  + 2\sum_{j=1}^{r'} |m_j'(\chi_f)| \, \text{almost-surely}
\end{align*}
and
\begin{align*}
    \mathrm{Var}(Y_1) &= \lim_{K\rightarrow\infty}\frac{1}{K} \sum_{x=0}^{K-1} \Big(2 \sum_{j=1}^{r'} m_j'(\chi_f) \cos(\theta_j (2x+1)) \Big)^2 \\
    &= \lim_{K\rightarrow\infty}\frac{4}{K} \sum_{x=0}^{K-1} \Big[\sum_{j=1}^{r'} \big(m_j' (\chi_f) \cos(\theta_j (2x+1))\big)^2 + \sum_{1 \leq j\neq k \leq r'} m_j'(\chi_f) m_k'(\chi_f) \cos(\theta_j (2x+1))\cos(\theta_k (2x+1))\Big] \\
    &= 2 \sum_{j=1}^{r'} m_j'(\chi_f)^2.
\end{align*}

By the Bhatia-Davis inequality (Theorem~\ref{BD}),
we obtain 
$$\text{Var}(Y_1) \leq \big(m_0(\chi_f) - m_{\pi}(\chi_f)  + 2\sum_{j=1}^{r'} |m_j'(\chi_f)|-\mathbb E(Y_1)\big)\big(\mathbb E(Y_1)-0\big).$$ 
 This yields 
 \begin{equation}\label{Eq Bathia Davis applied}
      \sum_{j=1}^{r'} m_j'(\chi_f) ^2 \leq 2\sum_{j=1}^{r'} \lvert m_j'(\chi_f) \rvert \left(m_0(\chi_f) - m_{\pi}(\chi_f) \right).
 \end{equation}
 
If every $m_j'(\chi_f)$ is zero, this means that for every integer $n$, one has $\Delta_f(2n+1) = m_0(\chi_f) - m_{\pi}(\chi_f)$.
Since $\Pi(n; \chi_f)$ exhibits a complete bias, this has to be positive, \textit{i.e.} $m_0(\chi_f) > m_{\pi}(\chi_f)$. If there is at least one non-zero $m_j'(\chi_f)$, the inequality \eqref{Eq Bathia Davis applied} also implies $m_0(\chi_f) > m_{\pi}(\chi_f)$.

Finally, since $\sqrt{q}$ and $-\sqrt{q}$ have distinct multiplicities as roots of $P_f \in \mathbb Z[T]$, those must be rational, hence integers, and so $q$ must be a square.
\end{proof}

\subsection{Examples of complete biases}

In this section, we first give a sufficient condition for a complete bias, in the hope to use it to find examples of instances of such an exceptional behavior.

\begin{lemma}[Sufficient condition for complete bias]\label{lemma:sufficient}
Let $f \in \mathbb F_q[x]$.
Write $$P_f(u) = (u-\sqrt{q})^{m_0} (u+\sqrt{q})^{m_{\pi}} L_{1}(u)L_{2}(u)$$ with $L_2(-u) = L_2(u)$ of maximal degree, $\deg L_i = d_i$.
Assume that one of the following assertions holds,
\begin{enumerate}
    \item\label{case strict sufficient complete} we have $m_0 > m_{\pi} + d_1$ and $m_0 + m_{\pi} + 1> d_1 + d_2$, or
\item we have $m_0 \geq m_{\pi} + d_1$ and $m_0 + m_{\pi} + 1 \geq d_1 + d_2$, and
\begin{enumerate}
    \item\label{case continuous sufficient complete} $L_{1}$ admits a root whose angle is not in $\mathbb{Q}\pi$, or
    \item\label{case asymetric sufficient complete} there exists $k_1, \dots, k_{d_1} \in \mathbb{Z}$ satisfying 
$\sum_{i=1}^{d_1} k_i \theta_i \equiv 0 \pmod{2\pi}$ and $\sum_{i=1}^{d_1} k_i$ is odd, 
where $\theta_1,\dots,\theta_{d_1}$ are the angles of the roots of $L_1$.
\end{enumerate}
\end{enumerate}
Then there is a complete bias with modulus~$f$.
\end{lemma}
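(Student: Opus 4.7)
The overall strategy is to show the stronger statement that $\Delta_f(n)>0$ for \emph{every} $n$, analysing the parities $n$ even and $n$ odd separately. Starting from
\[
\Delta_f(n) = \left(m_0 + \tfrac12\right) + (-1)^n\left(m_\pi + \tfrac12\right) + 2\sum_{j=1}^r m_{\theta_j}\cos(n\theta_j),
\]
I would split the cosine sum according to whether the angle $\theta_j$ contributes to $L_1$ or to $L_2$. The crucial observation is that, since $L_2$ is an even polynomial, its non-real roots come in pairs $(\alpha,-\alpha)$ of angles $(\theta,\pi-\theta)$; such a pair contributes $\cos(n\theta)+\cos(n(\pi-\theta))=(1+(-1)^n)\cos(n\theta)$, which vanishes for odd $n$ (and a self-paired angle $\pi/2$ also contributes $0$ when $n$ is odd, since $\cos(n\pi/2)=0$). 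Consequently the whole $L_2$-contribution disappears for odd $n$. Counting multiplicities, $d_1$ equals twice the sum of the $L_1$-multiplicities and $d_1+d_2$ equals $2\sum_j m_{\theta_j}$, so the triangle inequality yields
\[
\Delta_f(n) \geq (m_0-m_\pi)-d_1 \quad(n \text{ odd}), \qquad \Delta_f(n) \geq (m_0+m_\pi+1)-(d_1+d_2) \quad(n \text{ even}).
\]

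In case~\ref{case strict sufficient complete} both lower bounds are strictly positive by assumption, so $\Delta_f(n)>0$ for every $n$ and we are done. In case~(2) I would first exploit the parity identity $(m_0+m_\pi+1)-(d_1+d_2) = 2(m_0+m_\pi)+1-2g$, which is odd; thus the non-strict hypothesis $m_0+m_\pi+1\geq d_1+d_2$ upgrades automatically to $\Delta_f(n)\geq 1$ for $n$ even, handling the even case for free. For odd $n$, if $m_0-m_\pi > d_1$ we are done immediately; otherwise $m_0-m_\pi = d_1$, and equality $\Delta_f(n)=0$ would force $\cos(n\theta_j)=-1$, i.e.\ $n\theta_j\equiv \pi\pmod{2\pi}$, for every angle $\theta_j$ of $L_1$.

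Under subcase~\ref{case continuous sufficient complete}, some such angle is not in $\mathbb{Q}\pi$, so the congruence $n\theta_j\equiv\pi\pmod{2\pi}$ has no integer solution and $\Delta_f(n)>0$ for every odd $n$. Under subcase~\ref{case asymetric sufficient complete}, I would multiply the hypothesised relation $\sum_i k_i\theta_i\equiv 0\pmod{2\pi}$ by $n$: on the one hand, $\sum_i k_i(n\theta_i) = n\sum_i k_i\theta_i\equiv 0\pmod{2\pi}$, and on the other, if $n$ is odd and $n\theta_i\equiv\pi\pmod{2\pi}$ for every $i$, the same expression equals $\pi\sum_i k_i\equiv \pi\pmod{2\pi}$ because $\sum_i k_i$ is odd. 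This contradiction rules out the equality, so $\Delta_f(n)>0$ for every odd $n$ in this case too.

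The proof is mostly careful bookkeeping; the non-trivial ingredients are the $L_2$-pair cancellation at odd $n$, the parity identity that makes the ``even-$n$'' bound automatically strict in case~(2), and the one-line linear-algebraic trick of subcase~\ref{case asymetric sufficient complete} in which multiplying the odd-sum relation by $n$ produces the contradiction $0\equiv\pi\pmod{2\pi}$. The only place where one must be vigilant is keeping straight which angles belong to $L_1$ versus $L_2$, since the whole argument hinges on the $L_2$-part disappearing in the odd-$n$ regime.
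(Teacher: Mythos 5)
Your proof is correct and follows the same broad approach as the paper's: split by the parity of $n$, use that the $L_2$-contribution vanishes at odd $n$, and bound each parity class from below. You introduce two refinements that diverge from the paper in useful ways. First, the parity observation that $m_0+m_\pi+1-(d_1+d_2)=2(m_0+m_\pi)+1-2g$ is odd promotes the non-strict hypothesis $m_0+m_\pi+1\geq d_1+d_2$ to $\Delta_f(n)\geq 1$ for every even $n$, so the even side is handled unconditionally and uniformly across cases (1), (2a) and (2b); the paper handles even $n$ only implicitly, by noting $\cos(n\theta_1)>-1$ for almost all $n$. Second, for subcase~\ref{case asymetric sufficient complete} you argue by a direct contradiction after multiplying $\sum_i k_i\theta_i\equiv 0\pmod{2\pi}$ by $n$, whereas the paper invokes its auxiliary Lemma~\ref{Lemma: asymmetric}, which yields the quantitative lower bound $-d_1+1+\cos\bigl(\pi(1-1/\kappa)\bigr)$ on the cosine sum. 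Your route is more elementary and self-contained; the paper extracts that lemma because the explicit numeric bound is cited elsewhere (see the footnote in the proof of Lemma~\ref{lem : symmetry}). Both arguments actually prove the stronger statement $\Delta_f(n)>0$ for every $n$, although Definition~\ref{complete} only requires this for almost all $n$.
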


One such example is $f= t^4 + 2t^3 + 2t + a^7 \in \F_9[t]$ where $a$ is a generator of $\F_9$ over $\F_3$, in \cite{DevinMeng}*{Example~3} the authors show that  $P_f(u) = (u-3)^2$.

\begin{remark}
More generally, in \cite{DevinMeng}*{Proposition~3.1}, based on Honda--Tate ideas (citing \cite{Waterhouse}*{Theorem~4.1}), one can see that for each $q$ square, there exist $f \in \F_q[x]$ of degree $3$ such that the $L$-function of $\chi_f$ is $(1-\sqrt{q}u)^2$. This gives one example satisfying Lemma~\ref{lemma:sufficient} for each $q$ square. 
\end{remark}

\begin{remark}
Note however that our sufficient condition for a complete bias Lemma \ref{lemma:sufficient} is more restrictive than simply vanishing at $\tfrac{1}{2}$ so we cannot use the lower bound from \cite{Li18}*{Theorem 1.3} to give infinitely many examples of complete bias for a fixed $q$.
\end{remark}

\begin{proof}[Proof of Lemma \ref{lemma:sufficient}]
It suffices to prove that under these conditions, we have $\Delta_f(n) >0$ for almost all $n$, where
$\Delta_f$ is defined in \eqref{eq:Delta}.
We order the zeros of $P_f$ so that the first ones correspond to the zeros of $L_1$, with multiplicities. 
Then, for all $n$ we have
$$\Delta_{f}(2n+1) =  m_0(\chi_f) - m_{\pi}(\chi_f) + \sum_{j=1}^{d_1}  \cos(\theta_j (2n+1))$$
and
$$\Delta_f(2n) = 1 + m_0(\chi_f) + m_{\pi}(\chi_f) + \sum_{j=1}^{d_1 + d_2}  \cos(2\theta_j n). $$
Since $\cos(\theta_j n) \geq -1$ for all $j$ and $n$, the conditions in case~\ref{case strict sufficient complete} imply that $\Delta_f(n) >0$ for all $n$.
In the case the conditions of~\ref{case continuous sufficient complete} are satisfied, we have
$\sum_{j=1}^{d_1}  \cos(\theta_j n) > -d_1$ for almost all $n$, since, up to reordering, we can assume that $\theta_1\notin \mathbb{Q}\pi$ which yields $\cos(\theta_1 n) > -1$ for almost all $n$. This concludes the proof in the case~\ref{case continuous sufficient complete}.
In the case~\ref{case asymetric sufficient complete}, it follows from Lemma~\ref{Lemma: asymmetric} that
 $\sum_{j=1}^{d_1}  \cos(\theta_j n) \geq -d_1 + 1 + \cos(\pi(1-\tfrac{1}{\kappa})) > -d_1$ for all $n$, where $\kappa = \sum_{i=1}^{d_1}\lvert k_i\rvert$ and this concludes the proof.
\end{proof}

We conclude this section by proving a technical lemma that was used in the proof of the sufficient condition (Lemma~\ref{lemma:sufficient}).

\begin{lemma}\label{Lemma: asymmetric}
Let $\gamma_1, \dots, \gamma_N \in (0,\pi)$ and assume that there exists $k_1, \dots, k_N \in \mathbb{Z}$ satisfying 
$\sum_{i=1}^{N} k_i \gamma_i \equiv 0 \pmod{2\pi}$ and $\sum_{i=1}^{N} k_i$ is odd.
Then, for all $\ell \in \mathbb{Z}$, we have 
$\max_{1\leq i \leq N}\lVert \ell \gamma_i - \pi\rVert_{2\pi} \geq \frac{\pi}{ \sum_{i=1}^{N} \lvert k_i\rvert}.$
In particular,
$$ \sum_{1\leq i \leq N}\cos( \ell \gamma_i) \geq -N +1 + \cos\Bigg(\pi \Big(1- \frac{1}{ \sum_{i=1}^{N} \lvert k_i\rvert}\Big)\Bigg).$$
\end{lemma}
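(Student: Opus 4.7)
The plan is to establish the distance estimate by a direct contradiction argument exploiting the modular relation $\sum_i k_i \gamma_i \equiv 0 \pmod{2\pi}$, and then deduce the cosine inequality as an elementary trigonometric consequence.

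First, I would set $\kappa := \sum_i |k_i|$ and suppose for contradiction that $\|\ell\gamma_i - \pi\|_{2\pi} < \pi/\kappa$ for every $i$. Then for each $i$ we can write $\ell\gamma_i \equiv \pi + \epsilon_i \pmod{2\pi}$ with $|\epsilon_i| < \pi/\kappa$. Multiplying the assumed relation by $\ell$ gives $\sum_i k_i(\ell\gamma_i) \equiv 0 \pmod{2\pi}$, and substituting the expressions for $\ell\gamma_i$ transforms this into
$$\pi \sum_{i=1}^N k_i \;+\; \sum_{i=1}^N k_i\epsilon_i \;\equiv\; 0 \pmod{2\pi}.$$
Because $\sum_i k_i$ is odd by hypothesis, $\pi\sum_i k_i \equiv \pi \pmod{2\pi}$, so the displayed congruence forces $\sum_i k_i \epsilon_i \equiv \pi \pmod{2\pi}$. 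But the triangle inequality gives $|\sum_i k_i\epsilon_i| \leq \kappa \cdot \max_i |\epsilon_i| < \kappa \cdot (\pi/\kappa) = \pi$, ruling out that congruence and yielding the contradiction. This proves the distance estimate.

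For the cosine inequality, the first part produces an index $i_0$ with $\|\ell\gamma_{i_0} - \pi\|_{2\pi} \geq \pi/\kappa$. Writing $\ell\gamma_{i_0} \equiv \pi + t \pmod{2\pi}$ with $\pi/\kappa \leq |t| \leq \pi$, one has $\cos(\ell\gamma_{i_0}) = -\cos(t)$, and since $\cos$ is even and decreasing on $[0,\pi]$, $\cos(t) \leq \cos(\pi/\kappa)$, so $\cos(\ell\gamma_{i_0}) \geq -\cos(\pi/\kappa) = \cos(\pi(1 - 1/\kappa))$. Bounding each of the remaining $N-1$ cosines trivially by $-1$ yields the claimed lower bound.

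The only delicate point, which I view as the mild ``obstacle'', is arranging the bookkeeping $\pmod{2\pi}$ so that the parity hypothesis on $\sum_i k_i$ gets leveraged into producing the extra $\pi$ that the estimate $|\sum_i k_i\epsilon_i| < \pi$ cannot absorb; once this is observed, no further input is needed.
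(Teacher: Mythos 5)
Your proof is correct and uses essentially the same core argument as the paper: the paper writes $\lVert \ell\gamma_i - \pi\rVert_{2\pi} = |\ell\gamma_i - (2n_i+1)\pi|$, applies the weighted triangle inequality directly, and observes that $\ell\sum_i k_i\gamma_i - \pi\sum_i k_i(2n_i+1)$ is an odd multiple of $\pi$ by the parity hypothesis; your $\epsilon_i$ is precisely the paper's $\ell\gamma_i - (2n_i+1)\pi$, and your contradiction argument (``$|\sum_i k_i\epsilon_i| < \pi$ cannot be $\equiv \pi \pmod{2\pi}$'') is the contrapositive of the paper's direct lower bound. The cosine deduction is also identical.
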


\begin{proof}
Recall that 
$\lVert \ell \gamma_i - \pi\rVert_{2\pi} = \min_{n\in \mathbb{Z}} \lvert \ell \gamma_i - (2n+1)\pi \rvert$.
For each $i$, let $n_i \in \mathbb{Z}$ be an integer that satisfies this minimum.
We have
\begin{align*}
    \max_{1\leq i \leq N}\lVert \ell \gamma_i - \pi\rVert_{2\pi} 
    &= \max_{1\leq i \leq N}   \big\lvert \ell  \gamma_i - (2n_i+1)\pi\big\rvert \\
    &\geq \frac{1}{ \sum_{i=1}^{N} \lvert k_i\rvert} \Big\lvert  \ell \sum_{i=1}^{N} k_i \gamma_i - \sum_{i=1}^{N} k_i(2n_i +1) \pi \Big\rvert \\
     &\geq \frac{\pi}{ \sum_{i=1}^{N} \lvert k_i\rvert}.
\end{align*}
Now, suppose that  $\lVert \gamma - \pi\rVert_{2\pi} \geq \frac{\pi}{ \kappa}$,
then we have 
\begin{align*}
\cos(  \gamma) \geq \cos(\pi (1- \tfrac{1}{\kappa})) .
\end{align*}
This concludes the proof.
\end{proof}

\section{ Lower order biases}
\label{sec : lower order}
\subsection{Upper bound}

Our reflections on linear recurrence sequences from Section~\ref{recseq} give a good understanding on lower order bias.
In particular, the contraposition of Lemma~\ref{lem : non degenerate lower order bias} yields the following necessary condition for a lower order bias.

\begin{proposition}[Necessary condition for lower order bias]\label{lem : necessary condition lower order}
    If $\Pi(n; \chi_f)$ admits a lower order bias, then $\chi_f$ is degenerate (see Definition~\ref{nondegenerate}).
\end{proposition}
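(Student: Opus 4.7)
The plan is to observe that the proposition is essentially the direct contrapositive of Lemma \ref{lem : non degenerate lower order bias}, once we note that a subset of $\mathbb{N}$ of positive natural density is necessarily infinite. So I would proceed as follows.

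First, I would assume for contradiction that $\chi_f$ is non-degenerate in the sense of Definition \ref{nondegenerate}, that is, none of the ratios $\alpha_i/\alpha_j$ with $i \neq j$ and none of the ratios $\overline{\alpha_i}/\alpha_j$ is a root of unity. Then I would invoke Lemma \ref{lem : non degenerate lower order bias}, which asserts that under this non-degeneracy hypothesis the zero-set $\{n \in \mathbb{N} \mid \Delta_f(n) = 0\}$ is \emph{finite}. (Recall that the proof of that lemma split $\Delta_f$ across even and odd indices to remove the trivial ratio $1/(-1)$ coming from the eigenvalues $\pm\sqrt{q}$, and then applied the non-degenerate Skolem--Mahler--Lech result stating that a non-degenerate linear recurrence sequence attains any given value only finitely often.)

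Second, I would conclude that a finite zero-set has natural density equal to zero, so $\mathrm{dens}(\Delta_f(n) = 0) = 0$, contradicting the lower order bias hypothesis which, by Definition \ref{lower}, asserts this density is strictly positive. Hence $\chi_f$ must be degenerate.

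There is no real obstacle here: the analytic and Diophantine content has already been absorbed into Lemma \ref{lem : non degenerate lower order bias} via the Skolem--Mahler--Lech theorem (Theorem \ref{Thm:Sko}) and the non-degenerate refinement cited from \cite{RecSeq}. The only subtlety worth a sentence in the write-up is the (trivial) passage from positive density to infinitude of the zero-set, which justifies taking the contrapositive in the form stated.
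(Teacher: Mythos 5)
Your proposal is exactly the paper's argument: the paper introduces Proposition~\ref{lem : necessary condition lower order} explicitly as the contrapositive of Lemma~\ref{lem : non degenerate lower order bias}, which you have spelled out correctly, including the small observation that a finite zero-set has density zero. Nothing further is needed.
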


This lemma implies that for $\Pi(n; \chi_f)$ to admit a lower order bias, the Jacobian of the curve $C_f:y^2=f(x)$ is either non-ordinary or geometrically admitting an isogenous factor of order at least $2$.

Using this lemma and an application of the large sieve from Proposition~\ref{Prop Frobenius large sieve}, we obtain the proof of Theorem~\ref{thm:main}.\ref{item lower order th main}.

\begin{proof}[Proof of Theorem~\ref{thm:main}.\ref{item lower order th main}]
     The proof follows from applying  Proposition~\ref{lem : necessary condition lower order}
and Lemma~\ref{Lemma bound quotient}.   
\end{proof}

\subsection{A sufficient condition for lower order bias and examples}

\begin{lemma}[Sufficient condition for lower order bias]\label{lem : sufficient lower order}
Let $f \in \mathbb F_q[x]$.
Suppose that $P_f(u) = P_f(-u)$,
then $\Delta_f(2n+1)=0$ for all $n$, in particular, there is a lower order bias with modulus $f$.
\end{lemma}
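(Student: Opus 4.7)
The plan is to exploit the symmetry $P_f(u) = P_f(-u)$ directly at the level of the roots of $P_f$, and substitute into the explicit formula for $\Delta_f(n)$ given in \eqref{eq:Delta}.

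First I would translate the hypothesis $P_f(u) = P_f(-u)$ into a symmetry statement about the inverse zeros $\alpha_j(\chi_f) = \sqrt{q}e^{i\theta_j(\chi_f)}$. Since $P_f$ is invariant under $u \mapsto -u$, for every root $\alpha$ of $P_f$ (with multiplicity $m$) the element $-\alpha$ is also a root of $P_f$ with the same multiplicity $m$. In the angular parametrization, this means that the multiset $\{(\theta_j(\chi_f), m_{\theta_j}(\chi_f))\}$ is invariant under $\theta \mapsto \theta + \pi \pmod{2\pi}$. In particular, $m_0(\chi_f) = m_\pi(\chi_f)$ (the roots $\sqrt{q}$ and $-\sqrt{q}$ appear with the same multiplicity), and the non-real angles $\{\theta_j \neq 0,\pi\}$ form a set closed under $\theta \mapsto \theta + \pi$, with $m_{\theta_j + \pi}(\chi_f) = m_{\theta_j}(\chi_f)$.

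Next I would evaluate \eqref{eq:Delta} at an odd integer $n = 2k+1$. The first two terms give
$$\bigl(m_0(\chi_f) + \tfrac{1}{2}\bigr) - \bigl(m_\pi(\chi_f) + \tfrac{1}{2}\bigr) = m_0(\chi_f) - m_\pi(\chi_f) = 0$$
by the first observation. For the oscillating sum, I would group the angles in pairs $\{\theta_j, \theta_j + \pi\}$ using the second observation; each pair contributes
$$m_{\theta_j}(\chi_f)\bigl(e^{i(2k+1)\theta_j} + e^{i(2k+1)(\theta_j + \pi)}\bigr) = m_{\theta_j}(\chi_f)\, e^{i(2k+1)\theta_j}\bigl(1 + (-1)^{2k+1}\bigr) = 0.$$
Hence $\Delta_f(2k+1) = 0$ for every $k \geq 0$.

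Finally, I would conclude that the zero-set of $\Delta_f$ contains the arithmetic progression of odd positive integers, which has natural density $\tfrac{1}{2} > 0$, so $\mathrm{dens}(\Delta_f(n) = 0) \geq \tfrac{1}{2}$ and $\Pi(n;\chi_f)$ exhibits a lower order bias in the sense of Definition~\ref{lower}. There is no real obstacle here: once the symmetry of the root multiset is extracted from $P_f(u) = P_f(-u)$, the rest is a one-line cancellation; the only thing to verify carefully is that pairing $\theta \leftrightarrow \theta + \pi$ on the set of non-real angles is a well-defined involution, which is immediate since $\theta \neq 0,\pi$ implies $\theta + \pi \not\equiv 0,\pi \pmod{2\pi}$.
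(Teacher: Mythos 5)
Your proof is correct and takes essentially the same approach as the paper: translate $P_f(u)=P_f(-u)$ into a symmetry of the root multiset, observe $m_0=m_\pi$, and show the oscillating sum cancels at odd arguments. The only cosmetic difference is that you pair $\theta$ with $\theta+\pi$ directly in the exponential sum, whereas the paper works with the cosine form and pairs $\theta$ with $\pi-\theta$; both pairings encode the same $u\mapsto -u$ symmetry and yield the identical cancellation.
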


\begin{proof}
Assume that the roots of $P_f$ with positive imaginary parts are labelled so that their arguments are $\theta_1,...,\theta_t, \pi-\theta_1,...,\pi-\theta_t$. Since $P_f(u)=P_f(-u)$, the multiplicity of $\theta_i$ equals to that of $\pi-\theta_i$. For $n \in \mathbb{N}$ and $1 \leq j \leq t$, one has $\cos((\pi-\theta_j)(2n+1)) = -\cos(\theta_j(2n+1))$, whence
\[
\sum_{j=1}^t 2 m_{\theta_j}(\chi_f) \cos(\theta_j(2n+1)) + \sum_{j=1}^t 2 m_{\pi-\theta_j}(\chi_f) \cos((\pi-\theta_j)(2n+1)) = 0.
\]
Further,
\[
\left( \frac{1}{2} + m_0(\chi_f)\right) + \left(\frac{1}{2}+ m_\pi(\chi_f)\right)(-1)^{2n+1}=0.
\]
The above together give  $\Delta_f(2n+1)=0$ for all $n \in \mathbb{N}$. This is sufficient to deduce that there is a lower order bias with modulus $f$.  
\end{proof} 

One such example is $f= t^6 + 2t^3 + 5 \in \F_{23}[t]$ which is irreducible and the $L$-function of $\chi_f$ is $1 - 29u^2 + 23^2u^4$  which is even with $4$ inverse roots $\pm \alpha$, $\pm \overline{\alpha}$, where 
$$\alpha = \sqrt{23}\exp\left(\frac{i}{2}\arctan\left(\left(\frac{5\sqrt{51}}{29}\right)\right)\right).$$
Moreover, using \cite{Calcut}*{page 17}, we see that $\alpha$ has argument unrelated to $\pi$.

\begin{remark}
    Using \cite{HNR}*{Table~1.2} and the sufficient condition, we can give several examples for each $q$ that have a lower order bias. Namely the authors show that the polynomial $X^4 - bX^2 + q^2$ with $b = 2q\cos(2\theta)$ is the Weil polynomial of the Jacobian of a hyperelliptic curve of genus 2 if $b \in \Z$, $b\neq q, 2q, 2q-1, 2q-2$ and $b+2q$ is not a square. Since the Weil polynomial of the Jacobian of such a curve is equal to the corresponding $P_f$ (\cite{CorSil}*{VII. Corollary 11.4}), such $f$ exhibit lower order biases.
\end{remark}

\begin{remark}
The condition of Lemma \ref{lem : sufficient lower order}  gives rise to the following question: 
Fix a finite field $\mathbb{F}_q$,  how many hyperelliptic curves admit even Frobenius characteristic polynomials? If $C$ is such a curve, then $C \otimes \mathbb{F}_{q^2}$ has its Frobenius characteristic polynomial being a perfect square. This question is closely related to counting curves/characters whose $L$-functions are perfect squares. 
\end{remark}

\section{ Reversed biases}
\label{sec : wrong direction}

 \subsection{Upper bound}

Let us first give a necessary condition for a reversed bias.

\begin{proposition}[Necessary condition for a reversed bias] \label{Lemma necessary wrong direction}
If there is a reversed bias with modulus $f$ then 
\begin{itemize}
    \item either there exist $k_1, \dots, k_g \in \mathbf{Z}$ satisfying 
$ \sum_{i=1}^{g} k_i \theta_i \equiv 0 \pmod{2\pi}$ and $\sum_{i=1}^{g} k_i$ is odd, where the $\theta_i$ are angles of zeros of $P_f$.
\item or $m_0 < m_{\pi}$ (in particular, $q$ is a square).
\end{itemize}

\end{proposition}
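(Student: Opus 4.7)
The plan is to mimic the structure of the proof of Proposition~\ref{lemma:necessary-complete} and to use Lemma~\ref{lem : symmetry}. I would begin by observing that if $\mu_{\Delta_f}$ is symmetric about its mean $m_0(\chi_f)+\tfrac{1}{2}$, then the reflection identity yields $\mathrm{dens}(\Delta_f<0)=\mathrm{dens}(\Delta_f>2m_0(\chi_f)+1)$, and combining this with $\mathrm{dens}(0\leq\Delta_f\leq 2m_0(\chi_f)+1)\geq 0$ forces $\mathrm{dens}(\Delta_f<0)\leq\tfrac{1}{2}$, contradicting reversed bias. Hence Lemma~\ref{lem : symmetry} supplies integers $k_0,k_1,\dots,k_r$ with
\begin{equation*}
k_0\pi+\sum_{j=1}^{r}k_j\theta_j\equiv 0 \pmod{2\pi}
\end{equation*}
and $k_0+\sum_{j=1}^{r}k_j$ odd.

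Next I would split according to the parity of $k_0$. If $k_0$ is even, then $k_0\pi\equiv 0\pmod{2\pi}$, and the relation collapses to $\sum_j k_j\theta_j\equiv 0\pmod{2\pi}$ with $\sum_j k_j$ odd, which is exactly the first alternative (the $\theta_j$ being among the angles of zeros of $P_f$). If $k_0$ is odd, then $\sum_j k_j\theta_j\equiv\pi\pmod{2\pi}$ with $\sum_j k_j$ even; when $m_\pi(\chi_f)\geq 1$ I would absorb $\pi$ into the list of angles of zeros of $P_f$ with an odd coefficient to again land in the first alternative.

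The remaining case is $m_\pi(\chi_f)=0$ together with $k_0$ odd. For this I would invoke the finer parity-split decomposition
\begin{equation*}
\mathrm{dens}(\Delta_f<0)=\tfrac{1}{2}\mathbb{P}(Y_0<0)+\tfrac{1}{2}\mathbb{P}(Y_1<0),
\end{equation*}
used in the proof of Proposition~\ref{lemma:necessary-complete}, where $Y_0,Y_1$ are the limiting distributions of $\Delta_f(2\cdot)$ and $\Delta_f(2\cdot+1)$ with means $m_0(\chi_f)+m_\pi(\chi_f)+1>0$ and $m_0(\chi_f)-m_\pi(\chi_f)$. Applying the same reflection argument separately to each summand shows that under the remaining constraints the only way to maintain $\mathrm{dens}(\Delta_f<0)>\tfrac{1}{2}$ is $m_0(\chi_f)<m_\pi(\chi_f)$, giving the second alternative. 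In that case $\sqrt{q}$ and $-\sqrt{q}$ occur with distinct multiplicities as roots of the integer polynomial $P_f$, so both must be rational integers, forcing $q$ to be a perfect square, exactly as in the last paragraph of the proof of Proposition~\ref{lemma:necessary-complete}.

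The main obstacle I anticipate is making the two alternatives cleanly exhaustive: the odd relation produced by Lemma~\ref{lem : symmetry} treats $\pi$ as an independent auxiliary variable (reflecting the alternating term $(-1)^n$ in $\Delta_f$) rather than as an angle of a zero of $P_f$, so recasting it purely in the form stated in the first alternative, or otherwise funneling the remaining configurations into the second alternative, requires careful case analysis on the multiplicity $m_\pi(\chi_f)$ and on the parity of the coefficient of $\pi$.
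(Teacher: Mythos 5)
Your proposal follows the same route as the paper's proof: use Lemma~\ref{lem : symmetry} to extract an odd relation $k_0\pi + \sum_j k_j\theta_j \equiv 0 \pmod{2\pi}$ from the asymmetry forced by the reversed bias, split on the parity of $k_0$, obtain the first alternative when $k_0$ is even, and in the residual case use the parity-split decomposition of $\mathrm{dens}(\Delta_f<0)$ into $\tfrac12\mathbb{P}(Y_0<0)+\tfrac12\mathbb{P}(Y_1<0)$ together with the symmetry of $Y_0,Y_1$ about their means $m_0+m_\pi+1$ and $m_0-m_\pi$ to force $m_0<m_\pi$. Your extra sub-split on $m_\pi(\chi_f)\geq 1$ versus $m_\pi(\chi_f)=0$ is a mild refinement the paper leaves implicit (in the paper, when $m_\pi\geq 1$ and $k_0$ is odd the Kronecker--Weyl relation already lies among the angles of zeros, so the ``otherwise'' branch of the paper's argument is automatically the $m_\pi=0$ situation you isolate), so the approaches are essentially identical.
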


\begin{proof}
Suppose $f$ admits a reversed bias. Then the distribution $\mu_f$ is not symmetric with respect to its mean value $m_0 + \frac12\geq 0$.
So, from Lemma~\ref{lem : symmetry}, there exists $k_0, k_1, \dots, k_r$ such that $k_0 + \sum_{j=1}^r k_j \equiv 1 \pmod2$ and $k_0\pi + \sum_{j=1}^{r}k_j\theta_j \equiv 0 \pmod{2\pi}$.

If $k_0$ is even, we get the first condition.
Otherwise, assume that all relation between the $\theta_j$'s, $ \sum_{i=1}^{g} k_i \theta_i \equiv 0 \pmod{2\pi}$ satisfy $\sum_{i=1}^{g} k_i$ is even.  Then we deduce from  Lemma~\ref{lem : symmetry}, that the limiting distribution of the functions $\Delta(2\cdot)$ and $\Delta(2\cdot +1)$ are symmetric with respect to their mean values, which are $m_0 + m_\pi +1$ and $m_0 - m_{\pi}$. If the probability that one of the two functions is negative is larger than $\frac12$, then at least one of the mean values has to be negative.
\end{proof}

Here is a translation of our necessary condition in terms of the Galois group of $P_f$ over $\mathbb Q$, which is more convenient to use in the large sieve. Recall that for $f \in \mathcal{H}_n(\F_q)$, $\mathrm{Gal}_{\mathbb Q}(P_f)$ is a subgroup of $W_{2g} = \mathfrak S_g \ltimes \left(\Z/2\Z\right)^g$, itself a subgroup of $\mathfrak S_{2g}$ (see \cite{Kowalskibook}*{page 249}). In the following, we will consider that $\mathrm{Gal}(P_f) \subset \mathfrak S_{2g}$ acts on $\{-g, \dots, -1, 1, \dots, g\}$, the set of indices of the roots $\alpha_1, \dots, \alpha_g, \alpha_{-1} = \overline{\alpha_1}, \dots, \alpha_{-g} = \overline{\alpha_g}$. The fact that $\mathrm{Gal}(P_f) \subset W_{2g}$ means that if $\sigma \in \mathrm{Gal}(P_f)$ then $\sigma(-i)=-\sigma(i)$ for all $i \in \{-g, \dots, -1, 1, \dots, g\}$.

\begin{lemma}\label{Lemma Galois condition WD}
Let $f \in\mathcal{H}_n(\mathbb{F}_q)$. Assume that there exist $k_1, \dots, k_g \in \Z$ such that $k_1 \theta_1 + \dots + k_g \theta_g \equiv 0 \text{ mod } 2\pi$ and $k_1 + \dots + k_g \equiv 1 \pmod{2}$. Then at least one of the following conditions hold:

\begin{enumerate}
    \item $P_f$ is not separable.
    \item $\chi_f$ is degenerate (in the sense of Definition~\ref{nondegenerate}).
    \item $\mathrm{Gal}_{\Q}(P_f)$ does not act transitively on the set of pairs $\lbrace \lbrace 1, -1 \rbrace, \dots, \lbrace g, -g\rbrace\rbrace $;
    \item For every $i \in \{1, \dots, g\}$, $\mathrm{Gal}(P_f)$ does not contain the transposition $(i \, -i)$, and for every pair $\{i, j\}$, with $i \neq j \in \{1, \dots, g\}$, $\mathrm{Gal}_{\Q}(P_f)$ does not contain the $4$-cycle $(i \, j \, -i \, -j)$.
\end{enumerate}

\end{lemma}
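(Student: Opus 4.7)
The plan is to proceed by contraposition: assume that all four conditions fail and derive that no relation $\sum_{i=1}^g k_i \theta_i \equiv 0 \pmod{2\pi}$ with $S := \sum_{i=1}^g k_i$ odd can hold. Thus $P_f$ is separable (so the $\alpha_j$'s are distinct, which forces $m_0(\chi_f) = m_\pi(\chi_f) = 0$ and $r = g$), $\chi_f$ is non-degenerate, $G := \mathrm{Gal}_{\Q}(P_f)$ acts transitively on the $g$ pairs $\{\{i, -i\}\}_{i=1}^g$, and $G$ contains either a transposition $\sigma_0 = (i_0, -i_0)$ or a $4$-cycle $\sigma_0 = (i_0\, j_0\, -i_0\, -j_0)$.

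First I would convert the hypothesis into a multiplicative identity inside the splitting field $K$ of $P_f$: since $\alpha_i = \sqrt q\,e^{i\theta_i}$ we have
\begin{equation*}
\prod_{i=1}^g \alpha_i^{k_i} = q^{S/2} e^{i\sum_i k_i \theta_i} = q^{S/2}.
\end{equation*}
Writing $q^{S/2} = q^{(S-1)/2}\sqrt q$, this forces $\sqrt q \in K$, so the quadratic character $\epsilon : G \to \{\pm 1\}$ defined by $\sigma(\sqrt q) = \epsilon(\sigma)\sqrt q$ is well-defined. Applying $\sigma \in G$ to the identity and dividing out gives, with $I = \{\pm 1, \dots, \pm g\}$ and $\tilde k_i := k_i$ for $i > 0$, $\tilde k_i := 0$ for $i < 0$, the fundamental relation
\begin{equation*}
\prod_{j \in I} \alpha_j^{\tilde k_{\sigma^{-1}(j)} - \tilde k_j} = \epsilon(\sigma). \qquad (\ast)
\end{equation*}

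Next I would plug in the specific element $\sigma_0$ from the negation of condition (4). In the transposition case $\sigma_0 = (i_0, -i_0)$ (with $i_0 > 0$), using $\alpha_{-i_0} = q/\alpha_{i_0}$, the identity $(\ast)$ reduces to $\alpha_{i_0}^{2k_{i_0}} = \pm q^{k_{i_0}}$; taking arguments gives $2 k_{i_0}\theta_{i_0} \equiv 0 \pmod\pi$, which forces $k_{i_0} = 0$ because the non-degeneracy of $\chi_f$ implies $2\theta_{i_0} \notin \pi\Q$. In the $4$-cycle case, applying $(\ast)$ to $\sigma_0$ and to $\sigma_0^2 = (i_0, -i_0)(j_0, -j_0)$ and taking arguments of the resulting power products yields the linear system
\begin{align*}
k_{i_0}\theta_{i_0} + k_{j_0}\theta_{j_0} &\equiv 0 \pmod\pi, \\
(k_{i_0}+k_{j_0})\theta_{i_0} + (k_{j_0}-k_{i_0})\theta_{j_0} &\equiv 0 \pmod\pi,
\end{align*}
whose determinant in $(\theta_{i_0}, \theta_{j_0})$ is $-(k_{i_0}^2 + k_{j_0}^2)$. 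If $(k_{i_0}, k_{j_0}) \neq (0, 0)$ this would give $\theta_{i_0}, \theta_{j_0} \in \pi\Q$, again contradicting non-degeneracy, so $k_{i_0} = k_{j_0} = 0$.

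Finally, I would propagate these vanishings using the pair-transitivity from the negation of (3). For each $l \in \{1, \dots, g\}$, pick $\tau \in G$ with $\tau(\{i_0, -i_0\}) = \{l, -l\}$; since $G \subset W_{2g}$, the conjugate $\tau\sigma_0\tau^{-1} \in G$ has the same cycle type as $\sigma_0$ and either equals $(l, -l)$ or is a $4$-cycle $(l', m', -l', -m')$ with $l' \in \{\pm l\}$, $|m'| \neq l$. Re-running the previous step on this conjugate (the analogous linear-system computation works verbatim, with signs of $l'$ and $m'$ absorbed, again yielding determinant $-(k_l^2 + k_{|m'|}^2)$) gives $k_l = 0$. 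Iterating over $l$, we obtain $k_1 = \cdots = k_g = 0$, so $S = 0$, contradicting that $S$ is odd. The main care-point is the sign-bookkeeping in the $4$-cycle conjugation: one must verify that replacing positive $(i_0, j_0)$ by a mixed-sign pair only flips signs in the coefficient matrix in a way that leaves its determinant equal to minus the sum of two squares, so the non-degeneracy step still kicks in.
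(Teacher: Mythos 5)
Your proposal is correct, and it takes a genuinely different route from the paper's proof, even though both proceed by contraposition and act on the multiplicative relation $\prod_{i=1}^g \alpha_i^{k_i} = q^{S/2}$ by elements of $\mathrm{Gal}_{\Q}(P_f)$. The paper uses pair-transitivity \emph{first}: for each index $i$, it applies suitable $\sigma_j$'s to the original relation to produce a relation whose $i$-th exponent is nonzero, then ``takes a suitable product of large powers'' of these to manufacture a single relation $\prod_j (\alpha_j/\sqrt q)^{S_j}=1$ with \emph{all} $S_j \neq 0$; only then does it apply the transposition $(i\,{-}i)$ or the $4$-cycle $(i\,j\,{-}i\,{-}j)$ to this universal relation and contradict non-degeneracy (some $\alpha_j/\sqrt q$ would be a root of unity). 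You reverse the order: you apply the special permutation $\sigma_0$ \emph{directly} to the original relation, reducing to a $2\times 2$ linear system modulo $\pi$ in $\theta_{i_0},\theta_{j_0}$ with determinant $-(k_{i_0}^2+k_{j_0}^2)$, so non-degeneracy (here, $\theta_{i}\notin\pi\Q$) forces $k_{i_0}=k_{j_0}=0$; only then do you use transitivity, via conjugation $\tau\sigma_0\tau^{-1}$, to propagate the vanishing to every index and finish with the parity contradiction $S=0$ versus $S$ odd. Two points in your favor: your ``apply $\sigma_0$ first'' order avoids the paper's somewhat hand-wavy ``product of large powers'' step, and your explicit character $\epsilon:G\to\{\pm1\}$ recording $\sigma(\sqrt q)=\epsilon(\sigma)\sqrt q$ makes precise the $\pm1$ ambiguity that the paper silently absorbs (the paper writes $\prod(\alpha_j/\sqrt q)^{S_{i,j}}=1$ where the right-hand side should really be allowed to be $\pm1$, though this does not affect its conclusion since a root of unity is still a root of unity up to sign). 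The minor aside that separability forces $m_0=m_\pi=0$ is correct given the indexing convention $\alpha_{-i}=\overline{\alpha_i}$, but you could spell out that a real root $\alpha_i=\pm\sqrt q$ would force $\alpha_{-i}=\alpha_i$, contradicting the bijection between the $2g$ indices and the $2g$ distinct roots.
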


\begin{proof}
Assume that none of the first three items are satisfied.
Let us fix $i \in \lbrace 1, \dots, g\}$, then for every $j \in \lbrace 1, \dots, g\} \setminus \{i\}$, there exist $\sigma_{j} \in \mathrm{Gal}(P_f)$ such that $\sigma_j(j) \in \pm i$. From the multiplicative relation $$\left(\frac{\alpha_1}{\sqrt q}\right)^{k_1} \times \dots\times\left(\frac{\alpha_g}{\sqrt q}\right)^{k_g} = 1$$
with $\sum_{j=1}^g k_j \equiv 1 \pmod2$, we apply $\sigma_j$ and taking the product over all $j$'s we obtain another multiplicative relation of the form \begin{equation} \label{product} \left(\frac{\alpha_1}{\sqrt q}\right)^{S_{i,1}} \times \dots\times\left(\frac{\alpha_g}{\sqrt q}\right)^{S_{i,g}} = 1\end{equation}
where $S_{i,i} = \sum \pm k_j \equiv 1 \pmod2$. In particular $S_{i,i} \neq 0$.
This being true for each $i \in \lbrace 1,\dots, g\rbrace$, by taking a suitable product of large powers of expressions of the form \ref{product}, we deduce that there exists $S_1, \dots, S_g \in \Z \setminus \{0\}$ such that \begin{equation} \label{product2}\left(\frac{\alpha_1}{\sqrt q}\right)^{S_1} \times \dots\times\left(\frac{\alpha_g}{\sqrt q}\right)^{S_g} = 1.\end{equation}

Let $i \in \{1, \dots, g\}$. If $(i \, -i) \in \mathrm{Gal}_{\Q}(P_f)$, then we apply it to the relation \ref{product2} and taking a quotient we get $\left(\frac{\alpha_i}{\sqrt q}\right)^{2S_i} = 1$. This is a contradiction because $S_i \neq 0$ and $\frac{\alpha_i}{\sqrt q}$ is not a root of unity since $\chi_f$ is non-degenerate.

Now, let $i \neq j \in \{1, \dots, g\}$. If $(i \, j\, -i \, -j) \in \mathrm{Gal}_{\Q}(P_f)$ we get $\left(\frac{\alpha_i}{\sqrt{q}}\right)^{S_i+S_j} \left(\frac{\alpha_j}{\sqrt{q}}\right)^{S_j-S_i}= 1$, and similarly by applying its inverse $(i \, -j \, -i \, j) = (i \, j\, -i \, -j)^3$, we get $\left(\frac{\alpha_i}{\sqrt{q}}\right)^{S_i-S_j} \left(\frac{\alpha_j}{\sqrt{q}}\right)^{S_j+S_i}= 1$. Combining the two relations, we obtain $\left(\frac{\alpha_j}{\sqrt{q}}\right)^{(S_j+S_i)^2 + (S_j-S_i)^2 }= 1$.
But at least one among $S_i+S_j$ and $S_i - S_j$ is non-zero, since the $S_i$'s are non-zero, and as before, this shows that we cannot have $(i \, j \, -i \, -j) \in \mathrm{Gal}_{\Q}(P_f)$.
\end{proof}

\begin{lemma} \label{Lemma h reducible}
Let $P \in \mathbb Q[T]$ be a $q$-symplectic polynomial of degree $2g$ with roots $\alpha_1, \overline{\alpha_1}, \dots, \alpha_g, \overline{\alpha_g}$.
If $\Gal_{\Q}(P)$ does not act transitively on the pairs $\{\alpha_1, \overline{\alpha_1}\}, \dots, \{\alpha_g, \overline{\alpha_g}\}$ then $h_P$ defined by $P(T) = T^g h_P(T+qT^{-1})$ is reducible.
\end{lemma}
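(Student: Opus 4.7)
My plan is to show that the Galois group of $P$ acts naturally on the set of pairs $\lbrace \lbrace \alpha_1, \overline{\alpha_1}\rbrace, \dots, \lbrace \alpha_g, \overline{\alpha_g}\rbrace \rbrace$, and that a non-transitive orbit decomposition lifts to a non-trivial factorization of $h_P$ over $\Q$.

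First, I would observe that since $P$ is $q$-symplectic, for every root $\alpha$ of $P$ the element $q/\alpha$ is also a root, and in fact $\overline{\alpha_i} = q/\alpha_i$ for each $i$. Because $q \in \Q$, for any $\sigma \in \Gal_{\Q}(P)$ we have $\sigma(q/\alpha_i) = q/\sigma(\alpha_i)$, so if $\sigma(\alpha_i) = \alpha_j$ then $\sigma(\overline{\alpha_i}) = \overline{\alpha_j}$, and if $\sigma(\alpha_i) = \overline{\alpha_j}$ then $\sigma(\overline{\alpha_i}) = \alpha_j$. In either case $\sigma$ sends the unordered pair $\lbrace \alpha_i, \overline{\alpha_i}\rbrace$ to the unordered pair $\lbrace \alpha_j, \overline{\alpha_j}\rbrace$. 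Hence $\Gal_{\Q}(P)$ acts on the set $\mathcal{P}$ of such pairs.

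Next, I would note that the roots of $h_P$ are exactly the elements $\beta_i := \alpha_i + q/\alpha_i = \alpha_i + \overline{\alpha_i}$, so that $\beta_i$ depends only on the pair $\lbrace \alpha_i, \overline{\alpha_i}\rbrace$, and the Galois action on $\mathcal{P}$ factors through the action on $\lbrace \beta_1, \dots, \beta_g\rbrace$ via $\sigma \cdot \beta_i = \beta_{\sigma(i)}$, where I identify $\mathcal{P}$ with $\lbrace 1, \dots, g\rbrace$. Under the assumption of non-transitivity on $\mathcal{P}$, we can partition $\lbrace 1, \dots, g\rbrace$ into at least two non-empty $\Gal_{\Q}(P)$-orbits $O_1, \dots, O_r$ with $r \geq 2$.

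For each orbit $O_j$, I would form
\[
Q_j(T) := \prod_{i \in O_j} (T - \beta_i) \in \overline{\Q}[T].
\]
By construction the coefficients of $Q_j$ are symmetric functions of $\lbrace \beta_i \rbrace_{i \in O_j}$, which is a $\Gal_{\Q}(P)$-stable set, so $Q_j \in \Q[T]$. Then $h_P(T) = \prod_{j=1}^{r} Q_j(T)$, and since each $O_j$ is non-empty and $r \geq 2$ we have $1 \leq \deg Q_j \leq g-1$ for each $j$, exhibiting a non-trivial factorization of $h_P$ over $\Q$. This is essentially immediate once the action on pairs is set up; the only thing to check carefully is that the $q$-symplectic condition really does force Galois to respect the pairing, which is the key structural input.
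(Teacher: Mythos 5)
Your proof is correct. The two structural observations you rely on — that the roots of $h_P$ are $\beta_i = \alpha_i + q/\alpha_i$, and that the $q$-symplectic relation $\overline{\alpha_i} = q/\alpha_i$ forces every $\sigma \in \Gal_{\Q}(P)$ to permute the pairs $\{\alpha_i,\overline{\alpha_i}\}$ — are exactly what the paper's proof rests on as well, so the content is the same. The organization, however, is different: the paper argues by contrapositive, assuming $h_P$ irreducible, deducing transitivity of $\Gal_{\Q}(h_P)$ on the $\beta_i$, lifting a $\sigma$ along the restriction $\Gal_{\Q}(P)\to\Gal_{\Q}(h_P)$, and then extracting the pair-transitivity from the equation $\sqrt q\cos\theta_k = \sqrt q\cos\theta_j$. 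You instead argue directly: if the action on pairs is not transitive, you decompose $\{1,\dots,g\}$ into at least two Galois-stable orbits, form the orbit polynomials $Q_j$, check rationality by symmetric-function invariance, and obtain a nontrivial factorization $h_P = \prod_j Q_j$. Your route is slightly more elementary in that it never needs the cosine/angle step, it does not need to invoke irreducibility-implies-transitivity, and it handles degenerate cases (equal $\beta_i$'s, or $\alpha_i$ real) without comment, whereas the paper's cosine step tacitly assumes the $\theta_j$ are pairwise distinct. The paper's contrapositive version is marginally shorter because it can appeal directly to the standard fact that the Galois group of an irreducible polynomial acts transitively on its roots. Both are valid; the choice is stylistic.
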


\begin{proof}
Notice the roots of $h_P$ are the $\alpha_i+ \overline{\alpha_i}$. Every element of $\Gal_{\Q}(h_P)$ are restrictions of elements of $\Gal_{\Q}(P)$ to the splitting field of $h_P$. Now if $h_P$ is irreducible over $\mathbb Q$, then $\Gal(h_P)$ acts transitively on the set $\{\alpha_i+ \overline{\alpha_i} \mid i=1, \dots, g\}$. Thus, if $i \neq j \in \{1, \dots, g\}$, there exists $\sigma \in \Gal(P)$ such that $\sigma(\alpha_i + \overline{\alpha_i}) = \alpha_j + \overline{\alpha_j}$. But $\sigma(\alpha_i) = \alpha_k$ for some $k \in \{1, \dots, 2g\}$ so we have $\sqrt q \cos(\theta_k) = \sqrt q \cos(\theta_j)$ which implies $\theta_k = \pm \theta_j$, which means $\sigma(\alpha_i) = \alpha_j$ or $\sigma(\alpha_i) = \overline{\alpha_j}$, and $\Gal_{\Q}(P)$ acts transitively on the set of pairs $\{\alpha_i, \overline{\alpha_i}\}$.
\end{proof}

We can finally prove the last part of our main theorem.

\begin{proof}[Proof of Theorem~\ref{thm:main}.\ref{item wrong direction th main}]
The proof follows by using the necessary conditions of Proposition \ref{Lemma necessary wrong direction}. We obtain a bound for the second condition by the same argument as in Lemma \ref{Lemma vanish 0}. For the first condition of Proposition \ref{Lemma necessary wrong direction}, we use Lemma~\ref{Lemma Galois condition WD} and bound the conditions $1.$ and $2.$ by Lemma \ref{Lemma bound quotient}. The third item is bounded by using Lemma \ref{Lemma h reducible} and Lemma \ref{Lemma bound real Weil}. Finally, the bound obtained with the condition on the Galois group, which is the largest contribution, is dealt with by Lemma \ref{Lemma bound wrong direction condition}.
\end{proof}

\subsection{Examples}

In the hope of finding examples of reversed bias in the sense of Definition \ref{wrongdirection} we estimated 
$$\Delta_f(n) = \left(m_{0}(\chi_f) +\tfrac{1}{2}\right)  +\left(m_{\pi}(\chi_f)+\tfrac{1}{2}\right) (-1)^{n} +\sum_{\theta_j\neq 0, \pi} m_{\theta_j}(\chi_f)  e^{in\theta_{j}(\chi_f)},$$ where $\chi_f$ is the primitive quadratic character modulo $f \in \mathcal{H}_n(\F_q)$ for small genera $g = \left \lfloor \frac{n-1}{2} \right \rfloor$ and small finite fields $\mathbb{F}_q$. In particular, for fixed $f(x) \in \mathbb{F}_q[x]$ we computed $\Delta_f(n)$ for many values of $n$, e.g. all $0 \leq n \leq 1000$. We found no clear candidate curves which exhibited a "strong" reversed bias amongst $\mathcal{C}_f/\mathbb{F}_q$ with $q$ a prime less than $11$ and $\deg f(x) \leq 6$ as well as among those curves with $\deg f(x) \leq 8$ and $q=3$.

\begin{remark}
    We can still provide an infinite family of examples exhibiting a reversed bias. Indeed, when $q$ is a square the polynomial $(1-u\sqrt{q}+u^2q)^2$ is the $L$-function of a hyperelliptic curve of genus $2$ according to \cite{HNR}. For such a curve $\mathcal{C}_f$, we have
$\Delta_f(n) = \frac{1}{2} + \frac{(-1)^n}{2} + 4\cos(\tfrac{2\pi}{3}n)$ which is $6$-periodic and takes $2$ positive values and $4$ negative values; explicitly, it takes the values $5,-2,-1,4,-1,-2$.
\end{remark}

Cha's example (\cite{Cha2008}*{Example 5.3}) corresponds to a reversed bias, however Cha is counting polynomials with degree less than $n$ instead of polynomials of degree equal to $n$ (see Remark \ref{Rk: Cha's counting}). We verified that this example does not meet our criterion of being a reversed bias with our way of counting  polynomials, but it exhibits a lower order bias because $\Delta_f(n)$ is $10$-periodic, takes $3$ positive values (at $n\in \lbrace0, 1, 9\rbrace$), $2$ negative values (at $n\in \lbrace3,7$) and is zero otherwise.

\section{A few counting lemmas}
\label{sec : counting}

Using the large sieve statement Proposition \ref{Prop Frobenius large sieve}, we will now prove important intermediate counting lemmas that are used to establish our upper bounds for exceptional biases. Recall that $q$ is a power of the prime $p$, and for any $n \geq 2$, $g = \left \lfloor \frac{n-1}{2} \right \rfloor$ is the genus of the curve $\mathcal{C}_f$ for any $f \in \mathcal{H}_n(\F_q)$ the set of monic squarefree polynomials in $\F_q[x]$ of degree $n$.

\begin{lemma} \label{Lemma vanish 0}
We have $$\frac{1}{\lvert \mathcal{H}_n(\mathbb{F}_q)\rvert }\#\left\{f \in \mathcal{H}_n(\mathbb{F}_q) \mid m_0(\chi_f) > m_{\pi}(\chi_f) \right\} = \begin{cases} 0 & \text{ if } q \text{ is a square}\\
O_{p,g} \big(q^{-\frac1{A}}\log q\big) & \text{ otherwise},
\end{cases}$$
where $A = 2g^2 +g +2$.
\end{lemma}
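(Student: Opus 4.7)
The proof would split according to whether $q$ is a perfect square, with very different techniques in each case.

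When $q$ is not a perfect square, $\sqrt q$ is a quadratic irrational over $\mathbb Q$ with minimal polynomial $T^2 - q$ and unique Galois conjugate $-\sqrt q$. Since $P_f \in \mathbb Z[T]$ is Galois-invariant, the multiplicities of $\sqrt q$ and $-\sqrt q$ as its roots must agree: $m_0(\chi_f) = m_\pi(\chi_f)$ for every $f \in \mathcal H_n(\F_q)$, and the exceptional set is empty. This already implies a bound of the form $O_{p,g}(q^{-1/A}\log q)$, so I will focus on the case where this trivial vanishing is no longer available.

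When $q$ is a perfect square, $\sqrt q \in \mathbb Z$ and the two multiplicities can genuinely differ, so the plan is to invoke Proposition~\ref{Prop Frobenius large sieve}. For each $\ell \in \Lambda$, I would take $\Omega_\ell \subset \CSp_{2g}(\F_\ell)$ to be the conjugacy-invariant set of $q$-symplectic matrices whose characteristic polynomial has multiplicity at $\sqrt q$ no greater than its multiplicity at $-\sqrt q$ (both are well-defined elements of $\F_\ell$ since $\sqrt q \in \mathbb Z$). By the natural $\pm$-symmetry in $\Sp_{2g}(\F_\ell)$ swapping $\sqrt q$ and $-\sqrt q$ (e.g. conjugation by a suitable diagonal sign matrix preserving the symplectic form), the complement inside the multiplier-$q$ coset is a codimension-one condition, giving $|\Omega_\ell|/(|\Sp_{2g}(\F_\ell)| - |\Omega_\ell|) \gg \ell$. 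Summing over primes $\ell \in \Lambda$ with $\ell + 1 \leq q^{1/(2A)}$ then yields $H \gg q^{1/A}/\log q$ by a Mertens-type estimate on the set of primes in $\Lambda$, and Proposition~\ref{Prop Frobenius large sieve} delivers the density bound $\ll_{p,g} q^{-1/A} \log q$.

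The main technical obstacle is the contrapositive step: if $m_0(\chi_f) > m_\pi(\chi_f)$ over $\mathbb Z$, we need $\rho_\ell(\frob_{f,q}) \notin \Omega_\ell$ for sufficiently many $\ell$. Multiplicities of $P_f$ at $\pm\sqrt q$ are preserved under reduction modulo $\ell$ only for $\ell$ coprime to a certain $f$-dependent resultant, and some $f$-dependent bad primes can in principle cause the mod-$\ell$ multiplicity of $\sqrt q$ to jump relative to the $\mathbb Z$-multiplicity. Handling this control uniformly across the whole family $\mathcal H_n(\F_q)$ is the point requiring care, but it is exactly the sort of bookkeeping packaged into the compatible-system framework of \cite{Kowalski book} and the refinement of \cite{Perret-GentilANT} on which Proposition~\ref{Prop Frobenius large sieve} is built.
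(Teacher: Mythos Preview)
Your treatment of the non-square case is correct and matches the paper. The square case, however, has a genuine gap at exactly the point you flag as the ``main technical obstacle'', and it is not absorbed by the compatible-system formalism.

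With your choice $\Omega_\ell = \{M : m_{\sqrt q}(\chi_M \bmod \ell) \leq m_{-\sqrt q}(\chi_M \bmod \ell)\}$, applying Proposition~\ref{Prop Frobenius large sieve} requires that every $f$ with $m_0(\chi_f) > m_\pi(\chi_f)$ over $\mathbb Z$ satisfy the same strict inequality modulo \emph{every} $\ell$ in the sieve range. This fails: reduction mod $\ell$ can only raise multiplicities, and nothing stops the multiplicity at $-\sqrt q$ from catching up. For instance, if $P_f = (T-\sqrt q)Q(T)$ with $Q(\pm\sqrt q)\neq 0$ over $\mathbb Z$ but $Q(-\sqrt q)\equiv 0 \pmod \ell$, then mod $\ell$ one has $m_{\sqrt q}=m_{-\sqrt q}=1$, so $\rho_\ell(\frob_{f,q})\in\Omega_\ell$ and $f$ escapes the sieve entirely. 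The sieve bounds only those $f$ avoiding $\Omega_\ell$ for \emph{all} $\ell\leq L$, so such $f$ are simply not counted; there is no hidden bookkeeping in \cite{Kowalski book} or \cite{Perret-GentilANT} that repairs this.

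The paper sidesteps the problem by first enlarging the target set to $\{f : m_0(\chi_f)\geq 1\}=\{f : P_f(\sqrt q)=0\}$ and then sieving with $\Omega_\ell=\{M : \chi_M(\sqrt q)\neq 0 \text{ in } \F_\ell\}$. The key point is that $P_f(\sqrt q)=0$ over $\mathbb Z$ trivially forces $P_f(\sqrt q)\equiv 0 \pmod \ell$ for every $\ell$, so the containment needed for the sieve is automatic. The complement $\{\chi_M(\sqrt q)=0\}$ is a single linear condition on the $g$ free coefficients of a $q$-symplectic polynomial of degree $2g$, hence has density $O_g(1/\ell)$ via \cite{Kowalski book}*{Lemma~B.5}, giving $|\Omega_\ell|/(|\Sp_{2g}(\F_\ell)|-|\Omega_\ell|)\gg_g \ell$ and $H\gg_g q^{1/A}/\log q$ after summing over primes.

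A smaller point: conjugation never swaps eigenvalues, since it preserves the characteristic polynomial. The involution you want on the multiplier-$q$ coset is $M\mapsto -M$, but by itself that symmetry only gives density $\leq 1/2$ for the complement, not the $O(1/\ell)$ you claim; the codimension-one bound genuinely requires the observation that $m_{\sqrt q}>m_{-\sqrt q}\geq 0$ forces $\chi_M(\sqrt q)=0$.
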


\begin{proof}

We first remark that the set $\left\{f \in \mathcal{H}_n(\mathbb{F}_q) \mid m_0(\chi_f) > m_{\pi}(\chi_f) \right\}$ is empty when $q$ is not a square, because in that case, $\sqrt q$ and $- \sqrt q$ are conjugate algebraic numbers, so they must have the same multiplicity as roots of a polynomial with integer coefficients such as $P_f$. 
We will prove our bound by showing that when $q$ is a square, we have $$\frac{1}{\lvert \mathcal{H}_n(\mathbb{F}_q)\rvert }\#\left\{f \in \mathcal{H}_n(\mathbb{F}_q) \mid m_0(\chi_f) > m_{\pi}(\chi_f)  \right\}
\leq \frac{1}{\lvert \mathcal{H}_n(\mathbb{F}_q)\rvert }\#\left\{f \in \mathcal{H}_n(\mathbb{F}_q) \mid m_0(\chi_f) \geq 1  \right\} \ll_{p,g} q^{-\frac1{A}}\log q.$$

For every $\ell \in \Lambda$ (recall that $\Lambda$ is simply the set of primes different from $2$ and $p$), we introduce the set $\Omega_{5, \ell} \subset \CSp_{2g}(\mathbb{F}_{\ell})$ of $q$-symplectic matrices for which $\sqrt q$ is not an eigenvalue. 
From \cite{Kowalski book}*{Lemma~B.5} (due to Chavdarov) we have
\begin{align*}
 \frac{\lvert \Omega_{5, \ell} \rvert}{\lvert \Sp_{2g}(\F_\ell)\rvert} \geq 1 - \frac{1}{\ell^g}\Big(\tfrac{\ell}{\ell - 1}\Big)^{2g^2 + g + 1}\#\lbrace  P \in \mathbb{F}_{\ell}[T], \text{ $q$-symplectic, } \deg{P} = 2g, P(\sqrt q) = 0  \rbrace.
\end{align*}

Since the set of symplectic $q$-polynomials of degree $2g$ in $\F_{\ell}[T]$ has dimension $g$, and that the condition of vanishing at one point is a linear equation of the coefficients, we have $$\# \lbrace  P \in \mathbb{F}_{\ell}[T], \text{ $q$-symplectic, } \deg{P} = 2g, P(\sqrt q) = 0  \rbrace = \ell^{g-1}.$$
We deduce that there exist a constant $C_g$ depending on $g$ such that
\begin{align*}
 \frac{\lvert \Omega_{5, \ell} \rvert}{\lvert \Sp_{2g}(\F_\ell)\rvert} \geq 1 - \frac{C_g}{\ell}. 
\end{align*}

Therefore, for $A = 2g^2 +g +2$, we have
\begin{align*}
\sum_{\substack{\ell \leq q^{\frac1{2A}} -1 \\ \ell \in \Lambda}} \frac{|\Omega_{5, \ell}|}{|\Sp_{2g}(\F_{\ell})| - |\Omega_{5, \ell}|}
\geq
\sum_{\substack{\ell \leq q^{\frac1{2A}} -1 \\ \ell \in \Lambda}} \frac{1 - \frac{C_g}{\ell}}{ \frac{C_g}{\ell}} 
\gg_{g} \sum_{\substack{\ell \leq q^{\frac1{2A}} -1 \\ \ell \in \Lambda}}
\ell 
\gg_{g} q^{\frac{1}{A}}(\log q)^{-1}.
\end{align*}
The desired bound then follows from Proposition~\ref{Prop Frobenius large sieve} by summing only over primes in $\Lambda$.
\end{proof}

\begin{remark}
We could improve the bound above by not restricting to the sum over primes, but we decided not to pursue this here, as we expect the improvement will only be on the power of $\log q$.
\end{remark}

The following lemma will allow us to reduce our counting to the case of non-degenerate characters $\chi_f$ (as in Definition \ref{nondegenerate}) and simple roots of $P_f$.

\begin{lemma}\label{Lemma bound quotient}
We have $$\frac{1}{\lvert \mathcal{H}_n(\mathbb{F}_q)\rvert }\#\left\{f \in \mathcal{H}_n(\mathbb{F}_q) \mid \chi_f \text{ is degenerate or } P_f \text{ has a multiple root in $\mathbb C$} \right\} \ll_{p,g} q^{-\frac1A}\log q,$$
where $A = 2g^2 +g +2$.
\end{lemma}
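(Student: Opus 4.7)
The plan is to apply the large sieve for Frobenius (Proposition~\ref{Prop Frobenius large sieve}) in the style of the proof of Lemma~\ref{Lemma vanish 0}, using a single set $\Omega_\ell$ that detects both failure modes at once. First, I reduce both conditions to finitely many polynomial conditions on the coefficients of $P_f$. The $2g$ roots of $P_f$ are algebraic integers of degree at most $2g$ over $\Q$, so for any two such roots $\beta$ and $\beta'$, the ratio $\beta/\beta'$ lies in a number field of degree at most $4g^2$. In particular, if $\beta/\beta'$ is a root of unity of order $m$, then $\varphi(m) \leq 4g^2$, which bounds $m$ by an explicit constant $M_g$ depending only on $g$. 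Recalling that the roots of $P_f$ are precisely the $\alpha_i(\chi_f)$ and the $\overline{\alpha_i}(\chi_f) = q/\alpha_i(\chi_f)$ (plus possibly $\pm\sqrt{q}$), Definition~\ref{nondegenerate} shows that if $\chi_f$ is degenerate then there exist $1 \leq m \leq M_g$ and two distinct roots $\beta, \beta'$ of $P_f$ with $\beta^m = (\beta')^m$; and ``$P_f$ has a multiple root in $\bC$'' is exactly the case $m = 1$. Equivalently, the polynomial
\[
P_f^{[m]}(T) := \prod_{\beta \text{ root of } P_f}(T - \beta^m)
\]
is not separable for some $1 \leq m \leq M_g$.

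The coefficients of $P_f^{[m]}$ are symmetric functions in the roots of $P_f$, hence polynomials with integer coefficients in the coefficients of $P_f$. Let $D_m$ denote the discriminant of $P_f^{[m]}$, viewed as a polynomial in the coefficients of a $q$-symplectic polynomial of degree $2g$, and set $D := \prod_{m=1}^{M_g} D_m$. The hypothesis of the lemma implies $D(P_f) = 0$ in $\Z$, and therefore $D(P_f) \equiv 0 \pmod \ell$ for every prime $\ell$. For $\ell \in \Lambda$, set
\[
\Omega_\ell := \{M \in \CSp_{2g}(\F_\ell) \mid M \text{ has multiplicator } q,\ D(\chi_M) \neq 0 \text{ in } \F_\ell\}.
\]
Then any $f$ counted in the lemma satisfies $\rho_\ell(\frob_{f,q}) \notin \Omega_\ell$ for every $\ell$.

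To bound $\lvert\Omega_\ell\rvert/\lvert\Sp_{2g}(\F_\ell)\rvert$ from below, one exhibits a single $q$-symplectic polynomial on which $D$ does not vanish---for instance $P_0(T) = \prod_{i=1}^{g}(T - a_i)(T - q a_i^{-1})$ with sufficiently generic $a_i$, where genericity ensures that all $a_i^m$ stay distinct for $m \leq M_g$ and no relation $a_i^m a_j^m = q^m$ holds. This shows $D$ is a nonzero polynomial of degree bounded by a constant $C_g$ depending only on $g$. For $\ell$ outside a finite set $S_g$, the reduction $D \bmod \ell$ remains nonzero, so its vanishing locus on the $g$-dimensional space of $q$-symplectic monic polynomials of degree $2g$ in $\F_\ell[T]$ has cardinality at most $C_g\ell^{g-1}$. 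Chavdarov's bound \cite{Kowalski book}*{Lemma~B.5}, used as in the proof of Lemma~\ref{Lemma vanish 0}, then yields $\lvert\Omega_\ell\rvert/\lvert\Sp_{2g}(\F_\ell)\rvert \geq 1 - C_g'/\ell$ for $\ell \notin S_g$. Applying Proposition~\ref{Prop Frobenius large sieve} with $\Lambda$ replaced by $\Lambda \setminus S_g$ (Remark~\ref{rk : change Lambda}),
\[
H \;\geq\; \sum_{\substack{\ell \leq q^{1/(2A)}-1 \\ \ell \in \Lambda\setminus S_g}} \frac{\lvert\Omega_\ell\rvert}{\lvert\Sp_{2g}(\F_\ell)\rvert - \lvert\Omega_\ell\rvert} \;\gg_g\; \sum_{\ell \leq q^{1/(2A)}-1} \ell \;\gg\; q^{1/A}/\log q,
\]
and inverting gives the desired bound $H^{-1} \ll_{p,g} q^{-1/A}\log q$.

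The main obstacle is the first step: establishing a uniform bound $M_g$ on the orders of the relevant roots of unity and packaging the ``degenerate or non-separable'' condition as the vanishing of a single algebraic polynomial in the coefficients of $P_f$. The precise value of $M_g$ does not matter, as it only enters the implicit constants, and the non-triviality of $D$ is then a routine genericity argument.
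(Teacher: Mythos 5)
Your proposal is correct and takes essentially the same approach as the paper's proof: both reduce ``degenerate or non-separable'' to the vanishing of a fixed integer polynomial in the coefficients of $P_f$ (obtained by multiplying discriminants of the polynomials $\prod_\beta(T-\beta^m)$ over the finitely many admissible orders $m$ with $\varphi(m)\le 4g^2$), then apply the large sieve as in Lemma~\ref{Lemma vanish 0} to the $q$-symplectic hypersurface this polynomial cuts out. The one point where you diverge is how non-triviality of the detector polynomial is established: the paper simply invokes Theorem~\ref{thm:Kowalski} to produce, for $q$ large, a polynomial $h$ whose zeta function satisfies $\LI$, while you give a direct genericity argument with a generic $q$-symplectic polynomial $\prod_i(T-a_i)(T-qa_i^{-1})$; both work, and yours is marginally more self-contained. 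You also make explicit that the detector set $\{D=0\}$ strictly contains the set of interest (e.g., it also catches $f$ with both $\pm\sqrt q$ among the roots, via $m=2$); this is harmless since you only need an upper bound, and the same over-count occurs implicitly in the paper.
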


\begin{proof}
Let $f$ satisfy the above condition, that is $\chi_f$ is degenerate or $P_f$ has a multiple root in $\mathbb C$. Then there exist $1 \leq i \neq j \leq 2g$ such that $\frac{\alpha_i}{\alpha_j}$ is a root of unity, we denote $d$ its order (one can take $d=1$ in the case of a multiple root $\alpha_i=\alpha_j$). We first remark that $\alpha_i$ and $\alpha_j$ are algebraic integers of degree at most $2g$, so clearly $\frac{\alpha_i}{\alpha_j}$ is an algebraic number of degree at most $4g^2$, and so $\varphi(d) \leq 4g^2$.

Since $\alpha_i^d = \alpha_j^d$, it means that the polynomial $P_{f, (d)} = \prod_{i=1}^{2g} (X - \alpha_i^d)$ has a multiple root. This implies that its discriminant is $0$. Now, $\mathrm{disc}(P_{f, (d)})$ is a polynomial with integer coefficients in the coefficients of $P_{f, (d)}$ since it is the resultant of $P_{f, (d)}$ and its derivative. Moreover, those coefficients are symmetric polynomials in the $\alpha_k^d$'s, and in particular in the $\alpha_k$'s. By the fundamental theorem of symmetric polynomials, this is a polynomial expression in the elementary symmetric polynomials in the $\alpha_k$'s, which are precisely the coefficients of $P_f$.

We have shown that $P_f$ satisfies a certain integral polynomial equation, \textit{i.e.} there exists a polynomial $Q_{g, d} \in \Z[X_1, \dots, X_{2g}]$ such that, if $a_0, \dots, a_{2g-1}$ are the coefficients of $P_f$, then one has $Q_{g, d}(a_0, \dots, a_{2g-1}) = 0$. Since there are at most finitely many $d$ such that $\varphi(d) \leq 4g^2$, we get a universal relation $$Q_g = \prod_{d, \varphi(d) \leq 4g^2} Q_{g, d} \in \Z[X_1, \dots, X_{2g}]$$ such that if $\chi_f$ is degenerate or $P_f$ has a multiple root, then $Q_g(a_0, \dots, a_{2g-1}) = 0$.

Moreover, when $q$ is large enough, we know that $Q_g$ is non-zero since by Kowalski's result (Theorem~\ref{thm:Kowalski}), there exists a polynomial $h \in \F_q[x]$ monic of degree $n$ such that $P_h(T) =  T^{2g} + \dots + b_{1}T + b_0$ satisfies LI, and in particular, none of its quotients of roots is a root of unity, and for that polynomial, one has $Q(b_0, \dots, b_{2g-1}) \neq 0$.

So the equation $Q_g=0$ defines a hypersurface in  the set of $q$-symplectic polynomials of fixed degree, and we have
\begin{align*}
    \#\lbrace P \in \mathbb{F}_{\ell}[T] \mid &\text{ $q$-symplectic, } \deg{P} = 2g, Q(P) =0\rbrace \ll_g \ell^{g-1}.
\end{align*}
See also \cite{Kowalski book}*{Theorem~B.6}.
The end of the proof is completely similar to the end of the proof of Lemma~\ref{Lemma vanish 0}.
\end{proof}

In the next lemma, $h_{P_f}$ denotes the ``real Weil polynomial'' attached to $\mathcal C_f$, defined by the relation $$P_f(T) = T^g h_{P_f}(T + qT^{-1}).$$

\begin{lemma} \label{Lemma bound real Weil}
We have $$\frac{1}{\lvert \mathcal{H}_n(\mathbb{F}_q)\rvert }\#\left\{f \in \mathcal{H}_n(\mathbb{F}_q) \mid h_{P_f} \text{ is reducible}\right\} \ll_{p,g} q^{-\frac{1}{2A}}(\log q)^{1- \frac{1}{g-1}},$$
where $A = 2g^2 +g +2$.
\end{lemma}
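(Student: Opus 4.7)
The plan is to apply the large sieve (Proposition~\ref{Prop Frobenius large sieve}) to a family $\Omega_\ell$ that detects irreducibility of $h_{P_f}$ via reduction modulo $\ell$. The polynomial $P_f \in \Z[T]$ (by Weil) is monic $q$-symplectic of degree $2g$; expanding $P_f$ in the $\Z$-basis $\{T^g(T+qT^{-1})^i\}_{0 \leq i \leq g}$ of its ambient symplectic space gives $h_{P_f} \in \Z[T]$ monic of degree~$g$. If $h_{P_f}$ is reducible over $\Q$, Gauss's lemma provides a factorization into two monic polynomials in $\Z[T]$ of positive degrees, which reduces nontrivially modulo every prime~$\ell$; so $h_{P_f} \bmod \ell$ is reducible in $\F_\ell[T]$ for every prime $\ell$.

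For each $\ell \in \Lambda$, I would set
\[
\Omega_\ell = \{M \in \CSp_{2g}(\F_\ell) \mid M \text{ has multiplicator } q \text{ and } h_{\chi_M} \text{ is irreducible in } \F_\ell[X]\}.
\]
Since $\chi_{\rho_\ell(\frob_{f,q})} = P_f \bmod \ell$ and the construction $P \mapsto h_P$ commutes with reduction modulo $\ell$, the previous paragraph yields
\[
\{f \in \mathcal{H}_n(\F_q) \mid h_{P_f} \text{ reducible}\} \subseteq \{f \in \mathcal{H}_n(\F_q) \mid \rho_\ell(\frob_{f,q}) \notin \Omega_\ell \text{ for all } \ell \in \Lambda\},
\]
and Proposition~\ref{Prop Frobenius large sieve} bounds the cardinality on the left by $\lvert \mathcal{H}_n(\F_q) \rvert \cdot H^{-1}$, up to a constant depending on $p$ and~$g$.

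The crucial step is lower bounding $\alpha_\ell := \lvert \Omega_\ell\rvert/\lvert\Sp_{2g}(\F_\ell)\rvert$. The assignment $h(X) \mapsto T^g h(T+qT^{-1})$ is a bijection between monic polynomials of degree $g$ in $\F_\ell[X]$ and monic $q$-symplectic polynomials of degree $2g$ in $\F_\ell[T]$; hence, among monic $q$-symplectic polynomials of degree $2g$, the density of those with $h$ irreducible equals the density of monic irreducibles of degree $g$, namely $\tfrac{1}{g} + O_g(\ell^{-g/2})$. Applying the Chavdarov inequality \cite{Kowalski book}*{Lemma~B.5} exactly as in the proof of Lemma~\ref{Lemma vanish 0} (the complement set of ``bad'' polynomials has size $\ell^g(1-\tfrac1g) + O_g(\ell^{g/2})$), I get $\alpha_\ell \geq \tfrac{1}{g} - C_g/\ell$ for a constant $C_g$, so $\alpha_\ell/(1-\alpha_\ell) = \tfrac{1}{g-1} + O_g(1/\ell)$.

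After discarding finitely many small primes (permitted by Remark~\ref{rk : change Lambda}) to ensure $\alpha_\ell/(1-\alpha_\ell) \geq c/(g-1)$ uniformly for some absolute $c > 0$, the sieve quantity satisfies
\[
H \geq \sum_{\substack{m \in \mathcal{L}' \\ \psi(m) \leq q^{1/(2A)}}} \Big(\tfrac{c}{g-1}\Big)^{\omega(m)} \gg_g q^{1/(2A)} (\log q)^{\tfrac{1}{g-1}-1},
\]
by Selberg-Delange (equivalently, by Landau's asymptotic for integers with $k$ distinct prime factors applied to the Dirichlet series $\prod_\ell(1 + c_\ell/\ell^s)$, which factors as $\zeta(s)^{1/(g-1)} G(s)$ with $G$ regular and nonzero at $s=1$). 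This yields $H^{-1} \ll_{p,g} q^{-1/(2A)}(\log q)^{1-1/(g-1)}$, as required. The main technical obstacle is the density lower bound: confirming that the Chavdarov error goes in the favourable direction so that $\alpha_\ell$ remains asymptotic to $1/g$; once that is in hand, the passage from $\alpha_\ell/(1-\alpha_\ell) \sim 1/(g-1)$ to the $(\log q)^{1-1/(g-1)}$ factor is a routine multiplicative-number-theoretic computation.
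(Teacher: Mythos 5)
Your proposal is correct and follows essentially the same route as the paper's own proof: both use the set of symplectic similitudes $M$ with $h_{\chi_M}$ irreducible as the sieving set, bound its density via the bijection between monic degree-$g$ polynomials and degree-$2g$ $q$-symplectic polynomials together with Chavdarov's Lemma B.5, and then invoke a Lau--Wu/Selberg--Delange type estimate to convert the density $\sim 1/g$ into the factor $(\log q)^{1-1/(g-1)}$. The only cosmetic differences are your explicit appeal to Gauss's lemma (the paper compresses this into a one-line observation) and your preference for a direct Selberg--Delange argument where the paper cites \cite{Kowalski book}*{Theorem G.2} verbatim.
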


\begin{proof} We use Proposition~\ref{Prop Frobenius large sieve} with the set
$$\Omega_{6,\ell} = \lbrace M \in \CSp_{2g}(\mathbb{F}_{\ell}) \mid \exists h \in \F_{\ell}[T], h \text{ is monic irreducible and } \chi_M (T) = T^g h(T + q T^{-1})\rbrace.$$
Since if a monic polynomial is reducible, none of its reduction modulo a prime can be irreducible, we have

\begin{align*}
\#\left\{f \in \mathcal{H}_n(\mathbb{F}_q) \mid h_{P_f} \text{ is reducible}\right\}  &\leq \# \left\{ f \in \mathcal{H}_n(\F_q) \mid \rho_{\ell}(\frob_{f,q}) \notin \Omega_{6,\ell} \text{ for all } \ell < q^{\frac{1}{2A}}, \ell\in \Lambda \right\} \\ 
&\ll_{p,n} \lvert \mathcal{H}_n(\mathbb{F}_q)\rvert \Big( \sum_{\substack{\psi(m)\leq q^{\frac1{2A}} \\ m \in \mathcal{L}}} \prod_{\ell \mid m}\frac{\delta_{6,\ell}}{1 - \delta_{6,\ell}}\Big)^{-1}.
\end{align*}
where $\delta_{6,\ell} = \frac{\lvert \Omega_{6,\ell} \rvert}{\lvert \Sp_{2g}(\F_\ell)\rvert}$.
There are $\frac1g \ell^g(1 + O_g(\tfrac1\ell))$ monic irreducible polynomials of degree $g$ with coefficients in $\mathbb{F}_\ell$. As $P \mapsto h_P$ is a bijection from the set of $q$-symplectic polynomials in $\F_{\ell}[T]$ of degree $2g$ to the set of monic polynomials of degree $g$ in $\F_{\ell}[T]$, we deduce from \cite{Kowalski book}*{Lemma B.5} (similarly to Lemma~\ref{Lemma vanish 0}) that
$$\delta_{6, \ell} \geq \frac1g + O_g(\tfrac{1}{\ell}) =: \delta_{\ell}.$$
We conclude using the estimation of the sum from a theorem of Lau and Wu \cite{Kowalski book}*{Theorem G.2} applied the same way as Kowalski in \cite{Kowalski book}*{(8.24)}:
 \begin{align*}
 \sum_{\substack{\psi(m)\leq q^{\frac{1}{2A}} \\ m \in \mathcal{L}}} \prod_{\ell \mid m}\frac{\delta_{6, \ell}}{1 - \delta_{6, \ell}} \geq
    \sum_{\substack{\psi(m)\leq q^{\frac{1}{2A}} \\ m \in \mathcal{L}}} \prod_{\ell \mid m}\frac{\delta_{\ell}}{1 - \delta_{\ell}} \gg
    q^{\frac{1}{2A}}(\log q)^{-1 + \frac{1/g}{1 - 1/g}},
 \end{align*}
from which we deduce the stated bound.
\end{proof}

The last counting lemma is about polynomials $f \in \mathcal{H}_n(\F_q)$ such that $\mathrm{Gal}_{\mathbb Q}(P_f)$ does not contain certain permutations. Recall from the discussion above Lemma \ref{Lemma Galois condition WD} that $\mathrm{Gal}_{\mathbf{Q}}(P_f)$ acts on $\{-g, \dots, -1, 1, \dots, g\}$.

\begin{lemma}\label{Lemma bound wrong direction condition}
We have
\begin{align*}
    \#\Big\{f \in \mathcal{H}_n(\mathbb{F}_q) \mid \forall i \neq j \in \{1, \dots, g\},  (i \, -i) \notin\mathrm{Gal}(P_f) &\text{ and } (i \, j \, -i \, -j) \not \in \mathrm{Gal}(P_f)  \Big\}  \\
    &\ll_{p,g} q^{-\frac{1}{2A}} (\log q)^{1-\frac{1}{\frac{24}7 g-1}},
\end{align*}
where $A=2g^2+g+2$.
\end{lemma}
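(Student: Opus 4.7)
The plan is to apply the Frobenius large sieve (Proposition~\ref{Prop Frobenius large sieve}), following the template of the previous counting lemmas in this section. We would take $\Omega_\ell \subset \CSp_{2g}(\F_\ell)$ to be the set of matrices with multiplicator $q$ whose characteristic polynomial factors as a product $R(T) \cdot S(T)$, where:
\begin{itemize}
    \item $R$ is either an irreducible $q$-symplectic quadratic $T^2 - aT + q$ or an irreducible $q$-symplectic quartic $T^4 - bT^3 + cT^2 - qbT + q^2$;
    \item $S$ is a $q$-symplectic polynomial of complementary degree whose irreducible factors consist only of pairs of linear factors $(T-\alpha)(T-q/\alpha)$ and of pairs $L(T) L^*(T)$ with $L$ irreducible of odd degree $\geq 3$.
\end{itemize}

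The key implication is that if $\rho_\ell(\mathrm{Frob}_{f,q}) \in \Omega_\ell$ for some prime $\ell$ with $\ell \nmid \mathrm{disc}(P_f)$, then $\Gal_\Q(P_f)$ contains either a transposition $(i\,-i)$ or a $4$-cycle $(i\,j\,-i\,-j)$. Indeed, by the standard correspondence between the factorization type of $P_f \bmod \ell$ and the cycle type of Frobenius in $\Gal_\Q(P_f) \subseteq W_{2g}$, the choice of $\Omega_\ell$ forces the cycle structure at $\ell$ to be a single negative $k$-cycle (with $k \in \{1, 2\}$, contributed by $R$) together with positive cycles of odd length (contributed by $S$). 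Since $k \in \{1,2\}$ has order $2$ or $4$, and since the odd cycles have order coprime to the order of the negative piece, taking $N$ to be the least common multiple of the odd cycle lengths yields $\rho_\ell(\mathrm{Frob}_{f,q})^N$ equal to an odd non-zero power of the negative $k$-cycle. This power is itself a transposition (when $k = 1$) or a $4$-cycle (when $k = 2$) of the required form, contradicting the hypothesis.

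To invoke the sieve, we would bound the density $\delta_\ell = |\Omega_\ell|/|\Sp_{2g}(\F_\ell)|$ from below as $\ell \to \infty$. Counting $q$-symplectic polynomials of degree $2g$ over $\F_\ell$ with the prescribed factorization reduces, via the substitution $P(T) = T^g h_P(T + qT^{-1})$, to a count of monic polynomials of degree $g$ in $\F_\ell[T]$ with a prescribed shape (one linear or irreducible quadratic factor times a complementary part whose irreducible factors have odd degree only), in the spirit of \cite{Kowalski book}*{Lemma~B.5} and of the argument of Lemma~\ref{Lemma bound real Weil}. Summing the contributions from the quadratic-$R$ and quartic-$R$ subcases would give $\delta_\ell \geq \tfrac{7}{24g} + O_g(\ell^{-1})$. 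The theorem of Lau-Wu applied as in the proof of Lemma~\ref{Lemma bound real Weil} then yields
$$
\sum_{\substack{\psi(m) \leq q^{1/(2A)} \\ m \in \mathcal L}} \prod_{\ell \mid m} \frac{\delta_\ell}{1 - \delta_\ell} \gg_{p,g} q^{\frac{1}{2A}} (\log q)^{-1 + \frac{1}{(24/7)g - 1}},
$$
and Proposition~\ref{Prop Frobenius large sieve} delivers the stated bound.

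The principal obstacle is the sharp combinatorial density estimate $\delta_\ell \geq \tfrac{7}{24g}$. Each subcase (quadratic $R$ and quartic $R$) contributes a sum over $\F_\ell$-rational factorizations of $h_P$ whose ``odd part" must respect both the $q$-symplectic pairing and the odd-degree restriction, and extracting the sharp constant $\tfrac{7}{24}$ requires a careful generating-function or Chavdarov-type analysis as in \cite{Kowalski2006}*{Lemma~7.3}. Once this density bound is in hand, the rest of the argument is a direct application of the large sieve and the Selberg-Delange-type estimate already used elsewhere in this section.
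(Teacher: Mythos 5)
Your overall strategy is the same as the paper's: the sieve sets $\Omega_\ell$ you define coincide with the paper's $\Omega_{7,\ell}$, and your argument that a factorization of $P_f \bmod \ell$ into a symplectic irreducible of degree $2$ or $4$ times odd-degree reciprocal pairs forces $\Gal_\Q(P_f)$ to contain $(i\,{-i})$ or $(i\,j\,{-i}\,{-j})$ (raise the Frobenius to the odd power $N = \mathrm{lcm}$ of the odd cycle lengths, and note that a lone $2$- or $4$-cycle in $W_{2g}$ is forced to have the signed form) is essentially the one the paper invokes via the Dedekind--Jacobson theorem.

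However, there is a genuine gap where you yourself flag the ``principal obstacle'': you assert that the density $\delta_\ell = |\Omega_\ell|/|\Sp_{2g}(\F_\ell)|$ satisfies $\delta_\ell \geq \tfrac{7}{24g} + O(\ell^{-1/2})$ but do not derive it, and this inequality is the entire analytic content of the lemma (and the source of the improvement over Kowalski's $\tfrac{1}{4g}$). The paper obtains it by lower-bounding $|\Omega_{7,\ell}|$ with a count over \emph{specific} factorization shapes via Lemma~\ref{lemma counting polynomials}: for $g$ even it uses the two shapes $(k=1,n_{(g-2)/2}=1)$ and $(k=2,n_{(g-4)/2}=1,n_0=1)$, giving $\tfrac{1}{4(g-1)}+\tfrac{1}{16(g-3)}\geq\tfrac{5}{16g}$; for $g$ odd it needs three shapes $(k=1,n_{(g-3)/2}=1,n_0=1)$, $(k=1,n_{(g-5)/2}=1,n_1=1)$, and $(k=2,n_{(g-3)/2}=1)$, giving $\tfrac{1}{8(g-2)}+\tfrac{1}{24(g-4)}+\tfrac{1}{8(g-2)}\geq\tfrac{7}{24g}$. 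Choosing these shapes, and in particular including the $k=2$ (quartic) contribution, is not a routine application of \cite{Kowalski book}*{Lemma~B.5}; it is precisely the combinatorial input you leave unestablished. Two smaller omissions: the paper first discards the $f$ with $P_f$ inseparable (absorbed by the stronger bound of Lemma~\ref{Lemma bound quotient}), which is needed so that the Frobenius--factorization dictionary applies, and it restricts the sieve to primes $\ell > 4g^2$ so that Lemma~\ref{lemma counting polynomials} is valid; your write-up mentions $\ell \nmid \mathrm{disc}(P_f)$ in passing but does not handle either reduction.
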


\begin{proof}
First, we may assume that $P_f$ is separable, since the announced bound is worse than that of \ref{Lemma bound quotient}.

We are once again going to use the large sieve bound coming from Proposition~\ref{Prop Frobenius large sieve} but the set $\Lambda$ of prime numbers used in the large sieve has to be modified a bit here because of Lemma \ref{lemma counting polynomials}: we take $\Lambda$ to be the set of prime numbers different from $2$ and $p$ and larger than $4g^2$ (see Remark \ref{rk : change Lambda}). This only induces a further dependency on $g$ in the implied constants, but doesn't modify the final bound.

For every $\ell \in \Lambda$, we consider $\Omega_{7,\ell}$ be the set of $q$-symplectic matrices $M \in \CSp_{2g}(\mathbb{F}_{\ell})$ such that the characteristic polynomial $\chi_M$ admits a factorization either as a quadratic irreducible polynomial multiplied by distinct irreducible polynomials of odd degree, or as a quartic irreducible polynomial multiplied by distinct irreducible polynomials of odd degree. 
Indeed, if $P_f$ is separable but the Galois group $\mathrm{Gal}(P_f)$ does not contain a transposition nor a $4$-cycle (when seen as a subgroup of $\mathfrak{S}_{2g}$), then $\rho_{\ell}(\frob_{f,q}) \notin \Omega_{7,\ell}$ 
for any $\ell$ (see \cite{Jacobson}*{Theorem 4.37}).

Therefore, we need to count the symplectic polynomials with such factorizations to be able to conclude as above. For $\ell\in \Lambda$, we let $\delta_{7, \ell} = \frac{\lvert \Omega_{7,\ell} \rvert}{\lvert \Sp_{2g}(\F_\ell)\rvert}$.

In the case $g$ is even, we use Lemma~\ref{lemma counting polynomials} with 
 ($k=1$, $n_{\frac{g-2}{2}} = 1$) and with ($k=2$, $n_{\frac{g-4}{2}} = 1$, $n_{0} = 1$) to get
$$\delta_{7, \ell} \geq \frac1{4(g-1)} + \frac1{16(g-3)} + O(\ell^{-1/2}) \geq \frac5{16g} + O(\ell^{-\frac12}).$$

In the case $g$ is odd we use Lemma~\ref{lemma counting polynomials} with
($k=1$, $n_{\frac{g-3}{2}} = 1$, $n_0=1$) with ($k=1$, $n_{\frac{g-5}{2}} = 1$, $n_{1} = 1$), and with ($k=2$, $n_{\frac{g-3}{2}} = 1$) to get 
$$\delta_{7, \ell} \geq \frac1{8(g-2)} +  \frac1{24(g-4)} + \frac1{8(g-2)} + O(\ell^{-\frac12}) \geq \frac7{24g} + O(\ell^{-\frac12}).$$

In both cases we have $\delta_{7, \ell} \geq \frac7{24g} + O(\ell^{-\frac12})$, so we obtain the announced bound in the same way as in the proof of Proposition~\ref{Lemma bound real Weil}.
\end{proof}

\begin{remark}
In the proof above of Lemma \ref{Lemma bound wrong direction condition} one could expand the application of Lemma \ref{lemma counting polynomials} to add more terms to the lower bound of $\delta_{7,\ell}$ to gain marginal  improvements. The additional condition in the lemma and its application above with $k=2$ delivers our improvement over Kowalski's bound \eqref{Kowalski}.
\end{remark}

\begin{lemma}\label{lemma counting polynomials}
 Let $ 0 \leq k\leq g$ be two integers, and let $\ell>4g^2$ be a prime number. Write $r = \lfloor \frac{g-k-1}{2}\rfloor$ and let $n_i$, $1\leq i \leq r$, 
be integers such that
$g = k + n_0  + 3n_1 + 5n_2 + \dots + (2r+1)n_{r}$ .
Let $\omega_{k,\ell}(\underline{n})$ be the set of $q$-symplectic squarefree polynomials $P \in \mathbb{F}_{\ell} [T ]$ which factor as
a product
$P = Q_{2k} R_0 \tilde{R_0} R_1 \tilde{R_1} \dots R_r \tilde{R_r}$ ,
where $Q_{2k}$ is an irreducible $q$-symplectic polynomial of degree $2k$, each $R_i$ is a product of $n_i$ distinct irreducible monic polynomials of degree $2i+1$, and $\tilde{R_i} = \frac{T^{(2i+1)n_i}}{R_i(0)}R_i\left(\frac{q}{T}\right)$ is the $q$-reciprocal of $R_i$.
Then, we have
$$\lvert \omega_{k,\ell}(\underline{n})\rvert \geq \left(\prod_{i=0}^{r} \frac{1}{2^{n_i}(2i+1)^{n_i} n_i!}\right)  \frac{1}{2k}\ell^{g} - O(\ell^{g-\frac12}).$$
\end{lemma}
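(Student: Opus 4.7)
The plan is to enumerate the polynomials in $\omega_{k,\ell}(\underline{n})$ by independently choosing the irreducible $q$-symplectic factor $Q_{2k}$ and, for each $i \in \{0,\dots,r\}$, an unordered collection of $n_i$ mutually distinct pairs $\{R_{i,j},\tilde{R}_{i,j}\}$ of $q$-reciprocal irreducible polynomials of degree $2i+1$. Multiplying these counts produces the main term $\tfrac{1}{2k}\prod_i \tfrac{1}{2^{n_i}(2i+1)^{n_i} n_i!}\ell^g$, while a square-root saving in each factor yields the overall error $O(\ell^{g-1/2})$. Distinct choices give distinct polynomials by unique factorization into $q$-symplectic atoms, and every element of $\omega_{k,\ell}(\underline{n})$ arises in exactly one way.

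First, I would count the irreducible $q$-symplectic polynomials of degree $2k$ in $\F_\ell[T]$. The roots of such a polynomial form a single Galois orbit in $\F_{\ell^{2k}}^*$ of size $2k$ that is stable under the involution $\alpha \mapsto q/\alpha$; since this involution must coincide with a power of Frobenius, one has $q/\alpha = \alpha^{\ell^k}$, equivalently $N_{\F_{\ell^{2k}}/\F_{\ell^k}}(\alpha) = \alpha^{1+\ell^k} = q$. The norm being surjective, the fiber of $q$ has exactly $\ell^k+1$ elements; a case analysis based on whether a proper divisor $d$ of $2k$ also divides $k$ or not shows that at most $O(\ell^{k/2})$ of them lie in a proper subfield. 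Dividing by the orbit size $2k$ yields $\tfrac{\ell^k}{2k}+O(\ell^{k/2})$ irreducible $q$-symplectic polynomials of degree $2k$. The orbit size of $2k$ (rather than $k$) is precisely the source of the corrective factor $\tfrac12$ highlighted in the introduction.

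Next, for each $i$, I would count the $n_i$-subsets of pairs $\{R,\tilde{R}\}$ of distinct $q$-reciprocal irreducibles of degree $2i+1$. The total number of monic irreducibles of degree $2i+1$ in $\F_\ell[T]$ is $\tfrac{\ell^{2i+1}}{2i+1}+O(\ell^{(2i+1)/2})$, and the involution $R\mapsto \tilde{R}$ fixes only irreducibles having $\pm\sqrt{q}$ as a root, which forces $2i+1\leq 2$ and hence contributes at most two fixed points, only when $i=0$. Therefore the number of unordered pairs $\{R,\tilde R\}$ with $R\neq \tilde R$ equals $\tfrac{\ell^{2i+1}}{2(2i+1)}+O(\ell^{(2i+1)/2})$; the hypothesis $\ell>4g^2$ ensures this is larger than $n_i\leq g$, so the binomial coefficient is meaningful, and expanding it produces $\tfrac{\ell^{(2i+1)n_i}}{2^{n_i}(2i+1)^{n_i}n_i!}+O(\ell^{(2i+1)n_i-1/2})$.

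Finally, I would assemble the pieces. Any product $P = Q_{2k}\prod_{i=0}^r R_i\tilde{R}_i$ built from such choices is $q$-symplectic and squarefree: $Q_{2k}$ has even degree and is thus distinguished from the odd-degree factors, different values of $i$ correspond to different odd degrees, within each fixed $i$ the chosen pairs are distinct, and $R_{i,j}\neq \tilde{R}_{i,j}$ by construction. Unique factorization into $q$-symplectic atoms ensures injectivity of the map from the above choices to $\omega_{k,\ell}(\underline n)$. Multiplying the counts and using the constraint $k+\sum_i(2i+1)n_i=g$ gives
$$\frac{\ell^k}{2k}\prod_{i=0}^r \frac{\ell^{(2i+1)n_i}}{2^{n_i}(2i+1)^{n_i}n_i!}+O(\ell^{g-1/2}) = \left(\prod_{i=0}^r \frac{1}{2^{n_i}(2i+1)^{n_i}n_i!}\right)\frac{1}{2k}\ell^g+O(\ell^{g-1/2}),$$
as claimed. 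The main technical obstacle is the first step: correctly identifying each irreducible $q$-symplectic polynomial as arising from a Galois orbit of full size $2k$, rather than $k$ (as one might naively suppose given the pairing $\alpha\leftrightarrow q/\alpha$), is what restores the missing $\tfrac12$ in Kowalski's earlier count.
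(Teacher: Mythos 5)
Your proof is correct and takes essentially the same approach as the paper: factor $P$ into the degree-$2k$ irreducible $q$-symplectic part and the odd-degree pairs $\{R,\tilde R\}$, count each piece, and multiply. The bookkeeping differs mildly: the paper cites Kowalski (2006), Lemma~7.3(ii), for the $\tfrac{1}{2k}\ell^k - O(\ell^{k-1})$ count of irreducible $q$-symplectic polynomials of degree $2k$, whereas you derive it directly from the norm equation $\alpha^{1+\ell^k}=q$ on $\F_{\ell^{2k}}^*$; and the paper counts products $R_0\cdots R_r$ of distinct odd-degree irreducibles (via Lemma~B.1 of Kowalski's book), divides by the overcounting factor $\prod_i 2^{n_i}$, and removes non-squarefree polynomials at the end, whereas you count unordered $n_i$-subsets of pairs directly after excluding the self-reciprocal $R$ (which forces $\deg R = 1$, hence at most two exceptions), so squarefreeness is automatic. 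Both computations give the same main term and an acceptable error. One small historiographical caveat: you attribute the ``missing $\tfrac12$'' corrected in this lemma to the Galois orbit having size $2k$ rather than $k$, but the paper's remark explains that Kowalski's count of irreducible $q$-symplectic polynomials (Lemma~7.3(ii) of Kowalski 2006) was already correct; the factor of $\tfrac12$ that went missing arose in Lemma~7.3(iii), in the count of $q$-symplectic polynomials with a prescribed factorization type, i.e., exactly the $\prod_i 2^{-n_i}$ pairing correction rather than the $\tfrac1{2k}$ one. This does not affect the validity of your argument.
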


\begin{proof}
First observe that for any $q$-symplectic polynomial $P \in \F_{\ell}[T]$, one has $P(0)=q^{\deg P/2} \neq 0$, in particular, for all $R \mid P$, one has $R(0) \neq 0$.
We appeal to \cite{Kowalski2006}*{Lemma 7.3 (ii)}, which gives that the count of irreducible symplectic polynomials of degree $2k$ is larger than $\frac1{2k}\ell^{k} - O(\ell^{k-1})$ (see also \cite{DDS}*{Lemma~3} which can be adapted to the case of $q$-symplectic polynomials). The irreducible factors of odd degree of a symplectic polynomial come in pairs $\{R(T), \tilde{R}(T) = \frac{T^{\deg R}}{R(0)}R\left(\frac{q}{T}\right)\}$, uniquely determined by either of its elements. So it suffices to count polynomials of degree $g-k$ that are products of distinct odd degree irreducible polynomials. By \cite{Kowalski book}*{Lemma~B.1} there are $\prod_{i=0}^{r} \frac{1}{(2i+1)^{n_i} n_i!} \ell^{g-k} - O(\ell^{g-k-\frac12})$ polynomials with given factorization $R_0R_1\dots R_r$ (as in the statement of the lemma). 

For each polynomial with factorization type $R_0R_1\dots R_r$, for each factor $R_i$, $0 \leq i \leq r$, we have made a choice of which element of the pair $\{R(T), \tilde R(T) \}$ to include. There are $2^{n_i}$ such choices for each $i$.

We just need to remove from the final count the monic $q$-symplectic polynomials that have multiple roots, as counted in the proof of Lemma~\ref{Lemma bound quotient}, there are at most $O(\ell^{g-1})$ such polynomials, so this does not change the main term.
\end{proof}

\begin{bibdiv}
\begin{biblist}

\bib{AhmadiShpar}{article}{
  title={On the distribution of the number of points on algebraic curves in extensions of finite fields},
  author={Ahmadi, Omran},
  author={Shparlinski, Igor E},
  journal={Mathematical Research Letters},
  volume={17},
  number={4},
  pages={689--699},
  year={2010},
  publisher={International Press of Boston}
}

\bib{Bailleul1}{article}{
    AUTHOR = {Bailleul, Alexandre},
     TITLE = {Chebyshev's bias in dihedral and generalized quaternion
              {G}alois groups},
   JOURNAL = {Algebra Number Theory},
    VOLUME = {15},
      YEAR = {2021},
    NUMBER = {4},
     PAGES = {999--1041}
}

\bib{Bailleul2}{article}{
    author = {Bailleul, Alexandre},
    title = {Explicit Kronecker-Weyl theorems and applications to prime number races},
    journal = {Research in Number Theory},
    PAGES = {Paper No. 43, 34},
    NUMBER = {3},
    volume = {8},
    year = {2022}
}

\bib{BeGer}{article}{
    AUTHOR = {Bell, Jason P.},
  author={Gerhold, Stefan},
     TITLE = {On the positivity set of a linear recurrence sequence},
   JOURNAL = {Israel J. Math.},
    VOLUME = {157},
      YEAR = {2007},
     PAGES = {333--345},
      ISSN = {0021-2172},
    URL = {https://doi.org/10.1007/s11856-006-0015-1},
}

\bib{BhatiaDavis}{article}{
    author = {Bhatia, Rajendra},
    author = {Davis, Chandler},
    title = {A Better Bound on the Variance},
    journal = {American Mathematical Monthly},
    VOLUME = {107},
      YEAR = {2000},
    NUMBER = {4},
     PAGES = {353--357},
}

\bib{Calcut}{misc}{
    author = {Calcut, Jack S.},
    title = {Rationality and the Tangent Function},
    year = {2006},
    url = {https://www2.oberlin.edu/faculty/jcalcut/tanpap.pdf},
    note = {preprint available at https://www2.oberlin.edu/faculty/jcalcut/tanpap.pdf}
}
    
\bib{Cha2008}{article}{
    AUTHOR = {Cha, Byungchul},
     TITLE = {Chebyshev's bias in function fields},
  JOURNAL = {Compositio Mathematica},
    VOLUME = {144},
      date = {2008},
    NUMBER = {6},
     PAGES = {1351--1374},
      ISSN = {0010-437X},
       URL = {https://doi.org/10.1112/S0010437X08003631},
}

\bib{ChaIm2011}{article}{
  title={Chebyshev's bias in Galois extensions of global function fields},
  author={Cha, Byungchul},
  author = {Im, Bo-Hae},
  journal={Journal of Number Theory},
  volume={131},
  number={10},
  pages={1875--1886},
  year={2011},
  publisher={Elsevier}
}

\bib{CFJ 2016}{article}{
    AUTHOR = {Cha, Byungchul},
    author ={Fiorilli, Daniel},
    author ={Jouve, Florent},
     TITLE = {Prime number races for elliptic curves over function fields},
  JOURNAL = {Annales Scientifiques de l'\'{E}cole Normale Sup\'{e}rieure. Quatri\`eme
              S\'{e}rie},
    VOLUME = {49},
      YEAR = {2016},
    NUMBER = {5},
     PAGES = {1239--1277},
      ISSN = {0012-9593},
       URL = {https://doi.org/10.24033/asens.2308},
}

\bib{CFJ2017}{article}{
      AUTHOR = {Cha, Byungchul},
    author ={Fiorilli, Daniel},
    author ={Jouve, Florent},
     TITLE = {Independence of the zeros of elliptic curve {$L$}-functions
              over function fields},
   JOURNAL = {Int. Math. Res. Not. IMRN},
      YEAR = {2017},
    NUMBER = {9},
     PAGES = {2614--2661},
      ISSN = {1073-7928},
}

\bib{Chowla}{article}{
   author={Chowla, Sarvadaman},
   title={The Riemann hypothesis and Hilbert's tenth problem},
   journal={Norske Vid. Selsk. Forh. (Trondheim)},
   volume={38},
   date={1965},
   pages={62--64},
   issn={0368-6302},
}

\bib{CorSil}{book}{
     TITLE = {Arithmetic geometry},
    EDITOR = {Cornell, Gary},
    editor ={Silverman, Joseph H.},
      NOTE = {Papers from the conference held at the University of
              Connecticut, Storrs, Connecticut, July 30--August 10, 1984},
 PUBLISHER = {Springer-Verlag, New York},
      YEAR = {1986},
     PAGES = {xvi+353}
}

\bib{Cremona}{article}{
    AUTHOR = {Cremona, John E.},
     TITLE = {Classical invariants and 2-descent on elliptic curves},
      NOTE = {Computational algebra and number theory (Milwaukee, WI, 1996)},
   JOURNAL = {J. Symbolic Comput.},
    VOLUME = {31},
      YEAR = {2001},
    NUMBER = {1-2},
     PAGES = {71--87}
}

\bib{DDS}{article}{
    AUTHOR = {Davis, Sandra},
    Author = {Duke, William}, 
    Author = {Sun, Xiaodong},
     TITLE = {Probabilistic {G}alois theory of reciprocal polynomials},
  JOURNAL = {Expositiones Mathematicae. International Journal},
    VOLUME = {16},
      YEAR = {1998},
    NUMBER = {3},
     PAGES = {263--270},
      ISSN = {0723-0869},
}

\bib{DevinMeng}{article}{
	author = {Devin, Lucile},
	author = {Meng, Xianchang},
	title = {Chebyshev's bias for products of irreducible polynomials},
  JOURNAL = {Advances in Mathematics},
    VOLUME = {392},
      YEAR = {2021},
     PAGES = {Paper No. 108040, 45},
    }   

  \bib{Dupuy et al}{article}{
  title={Counterexamples to a Conjecture of Ahmadi and Shparlinski},
  author={Dupuy, Taylor},
  author={Kedlaya, Kiran},
  author={Roe, David},
  author={Vincent, Christelle},
  journal={Experimental Mathematics},
  pages={1--5},
  year={2021},
  publisher={Taylor \& Francis}
}
    
    \bib{ELS}{article}{
   author={Ellenberg, Jordan S.},
   author={Li, Wanlin},
   author={Shusterman, Mark},
   title={Nonvanishing of hyperelliptic zeta functions over finite fields},
   journal={Algebra Number Theory},
   volume={14},
   date={2020},
   number={7},
   pages={1895--1909},
   issn={1937-0652},
}

\bib{EVW}{article}{
   author={Ellenberg, Jordan S.},
   author={Venkatesh, Akshay},
   author={Westerland, Craig},
   title={Homological stability for Hurwitz spaces and the Cohen-Lenstra
   conjecture over function fields},
   journal={Ann. of Math. (2)},
   volume={183},
   date={2016},
   number={3},
   pages={729--786},
   issn={0003-486X},
}

\bib{RecSeq}{book}{
  title={Recurrence sequences},
  author={Everest, Graham},
  author={van der Poorten, Alf},
  author={Shparlinski, Igor},
  author={Ward, Thomas},
  volume={104},
  year={2003},
  publisher={American Mathematical Society Providence, RI}
}

\bib{FiorilliJouve}{article}{
    AUTHOR = {Fiorilli, Daniel},
    author={Jouve, Florent},
     TITLE = {Unconditional {C}hebyshev biases in number fields},
  JOURNAL = {Journal de l'\'{E}cole polytechnique. Math\'{e}matiques},
    VOLUME = {9},
      YEAR = {2022},
     PAGES = {671--679},
      ISSN = {2429-7100},
       URL = {https://doi.org/10.5802/jep.19},
}

\bib{FiorilliMartin}{article}{
    AUTHOR = {Fiorilli, Daniel}, author={Martin, Greg},
     TITLE = {Inequities in the {S}hanks-{R}\'{e}nyi prime number race: an
              asymptotic formula for the densities},
  JOURNAL = {Journal f\"{u}r die Reine und Angewandte Mathematik. [Crelle's
              Journal]},
    VOLUME = {676},
      YEAR = {2013},
     PAGES = {121--212},
      ISSN = {0075-4102},
       URL = {https://doi.org/10.1515/crelle.2012.004},
}

 \bib{Ford_Konyagin_2002}{article}{ 
    title={The prime number race and zeros of L -functions off the critical line}, 
    volume={113}, 
    ISSN={0012-7094}, 
    number={2}, 
    journal={Duke Mathematical Journal}, 
    author={Ford, Kevin}, 
    author={Konyagin, Sergei}, 
    year={2002}, 
    pages={313–330} 
}
 
\bib{Ford_Konyagin_2003}{proceedings}{
    AUTHOR = {Ford, Kevin}, author={Konyagin, Sergei},
     TITLE = {The prime number race and zeros of {$L$}-functions off the
              critical line. {II}},
 BOOKTITLE = {Proceedings of the {S}ession in {A}nalytic {N}umber {T}heory
              and {D}iophantine {E}quations},
    SERIES = {Bonner Math. Schriften},
    VOLUME = {360},
     PAGES = {40},
 PUBLISHER = {Univ. Bonn, Bonn},
      YEAR = {2003},
}

\bib{FordKonyaginLamzouri}{article}{
    AUTHOR = {Ford, Kevin}, author={Konyagin, Sergei}, author={Lamzouri, Youness}, 
     TITLE = {The prime number race and zeros of {D}irichlet {$L$}-functions
              off the critical line: {P}art {III}},
  JOURNAL = {The Quarterly Journal of Mathematics},
    VOLUME = {64},
      YEAR = {2013},
    NUMBER = {4},
     PAGES = {1091--1098},
      ISSN = {0033-5606},
       URL = {https://doi.org/10.1093/qmath/has021},
}

\bib{GranvilleMartin}{article}{
    AUTHOR = {Martin, Greg},
    AUTHOR = {Granville, Andrew},
    TITLE = {Prime Number Races},
    JOURNAL = {The American Mathematical Monthly},
    YEAR = {2006},
    VOLUME = {113},
    PAGES = {1-33},
    }

\bib{Hall}{article}{
    AUTHOR = {Hall, Chris},
     TITLE = {Big symplectic or orthogonal monodromy modulo {$\ell$}},
  JOURNAL = {Duke Mathematical Journal},
    VOLUME = {141},
      YEAR = {2008},
    NUMBER = {1},
     PAGES = {179--203},
      ISSN = {0012-7094},
       URL = {https://doi.org/10.1215/S0012-7094-08-14115-8},
}

\bib{HNR}{article}{
    AUTHOR = {Howe, Everett W.},
    author={Nart, Enric},
    author={Ritzenthaler, Christophe},
     TITLE = {Jacobians in isogeny classes of abelian surfaces over finite
              fields},
  JOURNAL = {Universit\'{e} de Grenoble. Annales de l'Institut Fourier},
    VOLUME = {59},
      YEAR = {2009},
    NUMBER = {1},
     PAGES = {239--289},
      ISSN = {0373-0956},
       URL = {http://aif.cedram.org/item?id=AIF_2009__59_1_239_0},
}

\bib{Hum}{misc}{
    Author = {Humphries, Peter},
    Title = {The {M}ertens and {P}{ó}lya conjectures in function fields},
    Year = {2012},
    School = {Australian National University},
    URL = {https://openresearch-repository.anu.edu.au/bitstream/1885/148266/2/b30953340_Humphries_P.pdf}
}

\bib{Jacobson}{book}{
   author={Jacobson, Nathan},
   title={Basic algebra. I},
   edition={2},
   publisher={W. H. Freeman and Company, New York},
   date={1985},
   pages={xviii+499},
   isbn={0-7167-1480-9},
}

\bib{Kowalski2006}{article}{
    AUTHOR = {Kowalski, Emmanuel},
     TITLE = {The large sieve, monodromy and zeta functions of curves},
   JOURNAL = {J. Reine Angew. Math.},
    VOLUME = {601},
      YEAR = {2006},
     PAGES = {29--69},
}

 \bib{Kowalski book}{book}{ title={The Large Sieve and its Applications: Arithmetic Geometry, Random Walks and Discrete Groups}, author={Kowalski, Emmanuel}, 
 SERIES = {Cambridge Tracts in Mathematics},
    VOLUME = {175},
 PUBLISHER = {Cambridge University Press, Cambridge},
      YEAR = {2008},
     PAGES = {xxii+293},
      ISBN = {978-0-521-88851-6},
       URL = {https://doi.org/10.1017/CBO9780511542947},}

\bib{Kowalski2010}{article}{
    AUTHOR = {Kowalski, Emmanuel},
     TITLE = {The large sieve, monodromy, and zeta functions of algebraic curves. {II}. {I}ndependence of the zeros},
   JOURNAL = {International Mathematics Research Notices. IMRN},
      date = {2008},
     PAGES = {Art. ID rnn 091, 57},
      ISSN = {1073-7928},
}

\bib{Li18}{article}{
    Author = {Li, Wanlin},
    Title = {Vanishing of hyperelliptic L-functions at the central point},
    Journal = {Journal of Number Theory},
    Volume = {191},
    Pages = {85-103},
    Year = {2018}
}

\bib{MartinNg2020}{article}{
    AUTHOR = {Martin, Greg},
    AUTHOR = {Ng, Nathan},
    TITLE = {Inclusive prime number races},
    JOURNAL = {Transactions of the American Mathematical Society},
    YEAR = {2020},
    VOLUME = {373},
    NUMBER = {5},
    PAGES = {3561-3607},
    }

\bib{Perret-GentilANT}{article}{
    AUTHOR = {Perret-Gentil, Corentin},
     TITLE = {Roots of {$L$}-functions of characters over function fields,
              generic linear independence and biases},
  JOURNAL = {Algebra \& Number Theory},
    VOLUME = {14},
      YEAR = {2020},
    NUMBER = {5},
     PAGES = {1291--1329},
      ISSN = {1937-0652},
}

\bib{Rosen2002}{book}{
	AUTHOR = {Rosen, Michael},
	TITLE = {Number theory in function fields},
	SERIES = {Graduate Texts in Mathematics},
	VOLUME = {210},
	PUBLISHER = {Springer-Verlag, New York},
	date = {2002},
	PAGES = {xii+358},
	ISBN = {0-387-95335-3},
	URL = {https://doi.org/10.1007/978-1-4757-6046-0},
}

\bib{RS}{article}{
  title={Chebyshev's bias},
  author={Rubinstein, Michael},
  author={Sarnak, Peter},
  journal={Experimental Mathematics},
  volume={3},
  number={3},
  pages={173--197},
  year={1994},
  publisher={Taylor \& Francis}
}

\bib{Sedrati}{article}{
    AUTHOR = {Sedrati, Youssef},
     TITLE = {Inequities in the {S}hanks-{R}enyi prime number race over
              function fields},
   JOURNAL = {Mathematika},
    VOLUME = {68},
      YEAR = {2022},
    NUMBER = {3},
     PAGES = {840--895},
      ISSN = {0025-5793},
       URL = {https://doi.org/10.1112/mtk.12150},
}

\bib{Silverman}{book}{
    AUTHOR = {Silverman, Joseph H.},
     TITLE = {The arithmetic of elliptic curves},
    SERIES = {Graduate Texts in Mathematics},
    VOLUME = {106},
   EDITION = {Second},
 PUBLISHER = {Springer, Dordrecht},
      YEAR = {2009},
     PAGES = {xx+513},
}

\bib{Waterhouse}{article}{
    AUTHOR = {Waterhouse, William C.},
     TITLE = {Abelian varieties over finite fields},
  JOURNAL = {Annales Scientifiques de l'\'{E}cole Normale Sup\'{e}rieure. Quatri\`eme S\'{e}rie},
    VOLUME = {2},
      YEAR = {1969},
     PAGES = {521--560},
      ISSN = {0012-9593},
       URL = {http://www.numdam.org/item?id=ASENS_1969_4_2_4_521_0},
}

\bib{Weil_RH}{book}{
    AUTHOR = {Weil, Andr\'{e}},
     TITLE = {Sur les courbes alg\'{e}briques et les vari\'{e}t\'{e}s qui s'en
              d\'{e}duisent},
    SERIES = {Actualit\'{e}s Sci. Ind., no. 1041, Publ. Inst. Math. Univ.
              Strasbourg {\bf 7} (1945)},
 PUBLISHER = {Hermann et Cie., Paris},
      date = {1948},
     PAGES = {iv+85},
}

\end{biblist}
\end{bibdiv}

\end{document}